\newtheorem{theorem}{Theorem}[section]
\newtheorem{lemma}[theorem]{Lemma}
\newtheorem{prop}[theorem]{Proposition}
\newtheorem{cor}[theorem]{Corollary}
\theoremstyle{definition}
\newtheorem{definition}[theorem]{Definition}
\newtheorem{example}[theorem]{Example}
\theoremstyle{remark}
\newtheorem{remark}[theorem]{\bf{Remark}}
\newtheorem{question}[theorem]{\bf{Question}}
\numberwithin{equation}{section}
\newcommand{\blue}[1]{{\color{black}#1}}
\begin{document}

\title[Numerical radius and $\ell_p$ operator norm of Kronecker products
\blue{and Schur powers}]
{Numerical radius and $\ell_p$ operator norm of Kronecker products
\blue{and Schur powers}: inequalities and equalities}

\author{Pintu Bhunia}
\address[P.~Bhunia]{Department of Mathematics, SRM University AP,
Amaravati 522240, Andhra Pradesh, India \& Department of Mathematics,
Indian Institute of Science, Bangalore 560012, India}
\email{\tt pintubhunia5206@gmail.com; pintu.b@srmap.edu.in}

\author{Sujit Sakharam Damase}
\address[S.S.~Damase]{Department of Mathematics, Indian Institute of
Science, Bangalore 560012, India; PhD student}
\email{\tt sujits@iisc.ac.in}

\author{Apoorva Khare}
\address[A.~Khare]{Department of Mathematics, Indian Institute of
Science, Bangalore, India and Analysis \& Probability Research Group,
Bangalore, India}
\email{\tt khare@iisc.ac.in}

\subjclass[2020]{47A30 (primary); 46M05, 47A12, 15A60, 47A63, 26C10
(secondary)}

\keywords{Numerical radius, Kronecker product, Schur product,
\blue{Hadamard power}, $\ell_p$ operator norm, \blue{spectral
radius, spectral norm, maximum modulus eigenvalue, Jordan block}}

\date{\today}

\begin{abstract}
Suppose $A=[a_{ij}]\in \mathcal{M}_n(\mathbb{C})$ is a complex $n \times
n$ matrix and $B\in \mathcal{B}(\mathcal{H})$ is a bounded linear
operator on a complex Hilbert space $\mathcal{H}$.
We show that $w(A\otimes B)\leq w(C),$ where $w(\cdot)$
denotes the numerical radius and $C=[c_{ij}]$ with 
$c_{ij}= w\left(\begin{bmatrix}
         0& a_{ij}\\
         a_{ji}&0
    \end{bmatrix} \otimes B\right).$
This refines Holbrook's classical bound $w(A\otimes B)\leq w(A) \|B\|$
[\textit{J.\ Reine Angew.\ Math.}\ 1969], when all entries of $A$ are
non-negative.
If moreover $a_{ii}\neq 0$ $ \forall i$, we prove that
$w(A\otimes B)= w(A) \|B\|$ if and only if $w(B)=\|B\|.$
We then extend these and other results to the more general
setting of semi-Hilbertian spaces induced by a positive operator.

In the reverse direction, we also specialize these results
to Kronecker products and hence to Schur/entrywise products, of matrices:
\blue{(1)(a)~We first provide an alternate proof (using $w(A)$) of a
result of Goldberg--Zwas [\textit{Linear Algebra Appl.}\ 1974] that if
the spectral norm of $A$ equals its spectral radius, then each Jordan
block for each maximum-modulus eigenvalue must be $1 \times 1$ (``partial
diagonalizability'').
(b)~Using our approach, we further show given $m \geq 1$ that $w(A^{\circ
m})\leq w^m(A)$ -- we also characterize when equality holds here.}
(2)~We provide upper and lower bounds for the $\ell_p$ operator norm and
the numerical radius of $A\otimes B$ for all $A \in
\mathcal{M}_n(\mathbb{C})$, which become equal when restricted to doubly
stochastic matrices $A$.
Finally, using these bounds we obtain an improved estimation for the
roots of an arbitrary complex polynomial.
\end{abstract}
\maketitle

\tableofcontents
\settocdepth{section}

\section{Introduction and main results}\label{sec-int}

Throughout this work, $\mathcal{H}$ denotes an arbitrary but
fixed (nonzero) complex Hilbert space. The study of the numerical radius
of a bounded linear operator $B \in \mathcal{B}(\mathcal{H})$ goes back
at least to Toeplitz \cite{Toeplitz} and Hausdorff \cite{Hausdorff} (see
also \cite{Halmos}). In fact, it goes back even earlier to Rayleigh
quotients in the 19th century. In recent times, the numerical radius has
seen widespread usage through applications in functional analysis,
operator theory, numerical analysis, systems theory, quantum computing,
and quantum information theory. We refer the reader to e.g.~\cite{book}
for more on this.

In this work, we focus on the numerical radius of the Kronecker product
(tensor product) of two operators, a quantity that has also been well
studied (see e.g.\ \cite{Ando}, \cite{BhuniaPMSC},
\cite{Gau-IEOT}--\cite{Kittaneh}, \cite{Shiu}). The interested reader can
also see the norm of the derivative of the Kronecker products, studied by
Bhatia et al. \cite{Bhatia-ejla}.  We introduce the relevant notions
here.

\begin{definition}
The numerical range $W(B)$ of a bounded linear operator $B \in
\mathcal{B}(\mathcal{H})$ is defined as $W(B) := \left\{ \langle
Bx,x\rangle: x\in \mathcal{H}, \|x\|=1  \right\}$, and the associated
numerical radius $w(B)$ is defined as
$w(B) := \sup\left\{ |\langle Bx,x\rangle|: x\in \mathcal{H}, \|x\|=1
\right\} = \sup\left\{ |\lambda|: \lambda\in W(B)  \right\}.$
\end{definition}

It is well known that the numerical radius defines a norm on
$\mathcal{B}(\mathcal{H})$, which is equivalent to the operator norm
$\|B\|=\sup\left\{ \| Bx\|: x\in \mathcal{H}, \|x\|=1
\right\}$ via: $\frac{1}{2}\|B\| \leq w(B) \leq \|B\|$. It is also weakly
unitarily invariant (see e.g.\ \cite{Halmos-1}), i.e.\
$w(U^*BU) = w(B)$ for every unitary $U$.

\begin{definition}\label{D12}
The tensor product $\mathcal{K} \otimes \mathcal{H}$ of two complex
Hilbert spaces $\mathcal{K}, \mathcal{H}$ is defined as the completion of
the inner product space consisting of all elements of the form
$\sum_{i=1}^{n}x_i \otimes y_i$ for $x_i \in \mathcal{K}$ and $ y_i \in
\mathcal{H}$, for $n\geq 1$, under the  inner product $\langle x \otimes
y, z \otimes w \rangle :=\langle x , z\rangle \langle y , w\rangle.$ In
particular, $\mathbb{C}^n \otimes \mathcal{H} \cong \mathcal{H}^{\oplus
n}$, and we will denote this by $\mathcal{H}^n$ henceforth.

The Kronecker product $A \otimes B$ of two operators $A\in
\mathcal{B}(\mathcal{K})$ and $B\in \mathcal{B}(\mathcal{H})$ is defined
as $(A \otimes B)(x \otimes y) := Ax \otimes By$ for $x \otimes y \in
\mathcal{K} \otimes \mathcal{H}.$ In particular, if $A=[a_{ij}]\in
\mathcal{M}_n(\mathbb{C})$ and $B\in \mathcal{B}(\mathcal{H})$, the
Kronecker product  $A\otimes B :=  [a_{ij}B]_{i,j=1}^n \in \mathcal{B}(
\mathcal{H}^n)$ is an $n\times n$ operator matrix.
\end{definition}

With this notation in hand,
\blue{we begin by broadly describing our work. It develops numerical
radius bounds in three themes -- the first of which is classical, while
the others seem to be novel.
\begin{enumerate}
\item Inequalities and equalities for the numerical radius of $A\otimes
B$, where $A,B$ are operators over Hilbert and semi-Hilbert spaces --
with the motivating goal to improve on the 1969 upper bound by Holbrook
\cite[Theorem 3.4]{double}:
\begin{equation}\label{E0-1}
w(A \otimes B) \leq w(A) \| B \|.
\end{equation}
This is a question that has seen much subsequent activity in the
literature.

\item We initiate the study of numerical radius bounds for Schur/Hadamard
powers of complex matrices. To the best of our knowledge, these have not
been studied before.

\item Numerical radius and $\ell_p$-norm bounds for Kronecker products of
matrices. Surprisingly, the study of $\ell_p$-norm bounds seems to be
very recent, including joint work by one of us~\cite{Khare}. And bounds
for $\| A \otimes B \|_p$ have -- once again to our knowledge -- not been
studied earlier.
\end{enumerate}

\subsection{Main results 1: Refining Holbrook's bound}

We now present our main results in the three themes listed above, in
serial order. Holbrook proved his inequality~\eqref{E0-1}} in the setting
of bounded linear operators $A$ and $B$ on an arbitrary Hilbert space
$\mathcal{H}$; this easily generalizes to any $A \in
\mathcal{B}(\mathcal{K})$ and $B \in \mathcal{B}(\mathcal{H})$, where
$(\mathcal{K}, \mathcal{H})$ denotes an arbitrary pair of Hilbert spaces
-- see e.g.\ \cite[Equation (2)]{BhuniaPMSC}. Our goal is to refine this
inequality; we are able to achieve this when the Hilbert space
$\mathcal{K}$ is finite-dimensional. Here is our first main result.

\begin{theorem}\label{th4}
Let $A=[a_{ij}]\in \mathcal{M}_n(\mathbb{C})$ and $B\in
\mathcal{B}(\mathcal{H}).$ Then 
\begin{equation}
w(A\otimes B) \leq w(C) \leq  w(C^\circ),
\end{equation}
where $C=[c_{ij}], C^\circ = [c^\circ_{ij}]$ have diagonal entries
$c_{ii} = c^\circ_{ii} = |a_{ii}| w(B)$, and off-diagonal entries
\[ c_{ij}=   w\left(\begin{bmatrix}
0& a_{ij}\\
a_{ji}&0
\end{bmatrix} \otimes B\right), \quad
c^\circ_{ij} = |a_{ij}| \|B\|, \qquad \forall 1 \leq i \neq j \leq n.
\]
\end{theorem}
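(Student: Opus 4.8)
The plan is to prove both inequalities by testing $A\otimes B$ against unit vectors of $\mathcal{H}^n$ and reducing the off-diagonal contributions to $2\times 2$ blocks. For the first inequality, fix a unit vector $\mathbf{x}=(x_1,\dots,x_n)\in\mathcal{H}^n$, so that $\sum_{i=1}^n\|x_i\|^2=1$, and expand
\[
\langle(A\otimes B)\mathbf{x},\mathbf{x}\rangle=\sum_{i=1}^n a_{ii}\langle Bx_i,x_i\rangle+\sum_{1\le i<j\le n}\bigl(a_{ij}\langle Bx_j,x_i\rangle+a_{ji}\langle Bx_i,x_j\rangle\bigr).
\]
The diagonal terms are controlled immediately: $|a_{ii}\langle Bx_i,x_i\rangle|\le|a_{ii}|\,w(B)\,\|x_i\|^2=c_{ii}\|x_i\|^2$. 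For a mixed pair $(i,j)$ with $x_i,x_j\ne 0$ (the degenerate case being trivial), set $M_{ij}=\begin{bmatrix}0&a_{ij}\\a_{ji}&0\end{bmatrix}$ and form the unit vector $\mathbf{u}=\tfrac{1}{\sqrt{2}}\bigl(x_i/\|x_i\|,\,x_j/\|x_j\|\bigr)\in\mathcal{H}^2$; a direct computation gives $\langle(M_{ij}\otimes B)\mathbf{u},\mathbf{u}\rangle=\tfrac{1}{2\|x_i\|\,\|x_j\|}\bigl(a_{ij}\langle Bx_j,x_i\rangle+a_{ji}\langle Bx_i,x_j\rangle\bigr)$, whence $|a_{ij}\langle Bx_j,x_i\rangle+a_{ji}\langle Bx_i,x_j\rangle|\le 2\,w(M_{ij}\otimes B)\,\|x_i\|\,\|x_j\|=2c_{ij}\|x_i\|\,\|x_j\|$.

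Next I would assemble these estimates. Set $\mathbf{s}:=(\|x_1\|,\dots,\|x_n\|)$, which is a unit vector in $\mathbb{R}^n$ with nonnegative entries since $\|\mathbf{s}\|^2=\|\mathbf{x}\|^2=1$. Using that $C$ is a real symmetric matrix (the symmetry $c_{ij}=c_{ji}$ holds because $M_{ij}$ and $M_{ji}$ are conjugate under the coordinate flip on $\mathbb{C}^2$, so that $M_{ij}\otimes B$ and $M_{ji}\otimes B$ are unitarily equivalent), we get
\[
|\langle(A\otimes B)\mathbf{x},\mathbf{x}\rangle|\le\sum_i c_{ii}\|x_i\|^2+2\sum_{i<j}c_{ij}\|x_i\|\,\|x_j\|=\langle C\mathbf{s},\mathbf{s}\rangle\le w(C).
\]
Taking the supremum over all unit vectors $\mathbf{x}$ gives $w(A\otimes B)\le w(C)$.

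For the second inequality, I would use entrywise monotonicity of the numerical radius on matrices with nonnegative entries. Both $C$ and $C^\circ$ have nonnegative entries and the same diagonal. Applying Holbrook's bound \eqref{E0-1} to $M_{ij}\otimes B$ together with $w(M_{ij})\le\tfrac{1}{2}(|a_{ij}|+|a_{ji}|)$ (which follows from the triangle inequality for the norm $w(\cdot)$ and the fact $w\bigl(\begin{bmatrix}0&1\\0&0\end{bmatrix}\bigr)=\tfrac{1}{2}$) yields $c_{ij}=w(M_{ij}\otimes B)\le\tfrac{1}{2}(|a_{ij}|+|a_{ji}|)\|B\|$, that is, $2c_{ij}\le c^\circ_{ij}+c^\circ_{ji}$ for $i\ne j$. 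Hence $\langle Cv,v\rangle\le\langle C^\circ v,v\rangle$ for every $v\in\mathbb{R}^n$ with nonnegative entries. Finally, for an arbitrary complex unit vector $z$, writing $|z|$ for its entrywise modulus (again a unit vector), nonnegativity of the entries of $C$ gives $|\langle Cz,z\rangle|\le\langle C|z|,|z|\rangle\le\langle C^\circ|z|,|z|\rangle\le w(C^\circ)$; the supremum over $z$ yields $w(C)\le w(C^\circ)$.

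The step that I expect to require the most care is the mixed-pair estimate. Testing with the unnormalized vector $(x_i,x_j)$ only gives the bound $c_{ij}\bigl(\|x_i\|^2+\|x_j\|^2\bigr)$, and after summation this produces a quadratic form strictly weaker than $\langle C\mathbf{s},\mathbf{s}\rangle$ (essentially a column-sum bound for $C$). The right move is the symmetric normalization $\tfrac{1}{\sqrt{2}}(x_i/\|x_i\|,\,x_j/\|x_j\|)$, which makes the cross term come out as $2c_{ij}\|x_i\|\,\|x_j\|$ and lets the right-hand side reassemble exactly into the quadratic form of $C$ evaluated at $\mathbf{s}$. Everything else is routine: the diagonal estimate, the symmetry of $C$, and the reduction to nonnegative test vectors in the second part.
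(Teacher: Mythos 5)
Your proof is correct, and its main part takes a genuinely different route from the paper's. For the inequality $w(A\otimes B)\le w(C)$, the paper does not argue at the level of test vectors: it uses the characterization $w(T)=\max_{|\lambda|=1}\|\operatorname{Re}(\lambda T)\|$, applies the Hou--Du norm inequality $\|[P_{ij}]\|\le\big\|[\|P_{ij}\|]\big\|$ for operator matrices to $\operatorname{Re}(\lambda A\otimes B)=\bigl[\tfrac{1}{2}(\lambda a_{ij}B+\bar\lambda\bar a_{ji}B^*)\bigr]$, then uses entrywise monotonicity of $w$ on nonnegative matrices together with $\sup_{|\lambda|=1}\tfrac12\|\lambda X+\bar\lambda X^*\|=w(X)$ to identify the entries of the dominating matrix as exactly the $c_{ij}=w(M_{ij}\otimes B)$ and $c_{ii}=|a_{ii}|w(B)$. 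Your argument instead expands $\langle (A\otimes B)\mathbf{x},\mathbf{x}\rangle$, bounds each $\{i,j\}$-pair by testing $M_{ij}\otimes B$ against the symmetrically normalized vector $\tfrac{1}{\sqrt2}(x_i/\|x_i\|,\,x_j/\|x_j\|)$, and reassembles the estimates into $\langle C\mathbf{s},\mathbf{s}\rangle$ with $\mathbf{s}=(\|x_1\|,\dots,\|x_n\|)$; this is more elementary and self-contained (no Hou--Du lemma, no real-part characterization), at the cost of needing the symmetry $c_{ij}=c_{ji}$, which you correctly justify by the flip unitary. For the second inequality $w(C)\le w(C^\circ)$ the two proofs share the same key entrywise bound $2c_{ij}\le(|a_{ij}|+|a_{ji}|)\|B\|=c^\circ_{ij}+c^\circ_{ji}$ (the paper's \eqref{Etemp}); the paper then invokes \eqref{p1}--\eqref{p2} (numerical radius of a nonnegative matrix as half the spectral radius of its symmetrization, plus Perron--Frobenius monotonicity), whereas you reprove the needed monotonicity directly by reducing to nonnegative test vectors via $|\langle Cz,z\rangle|\le\langle C|z|,|z|\rangle$. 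Both are valid; the paper's machinery is what it later reuses (e.g.\ in Theorem~\ref{cor1}), while your version makes the theorem accessible without the operator-matrix norm lemma.
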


We make several remarks here:

\begin{enumerate}
\item Note that Theorem~\ref{th4} implies Holbrook's
inequality~\eqref{E0-1} in the special case where all $a_{ij} \geq 0$.
Indeed, using the Schur product one may rewrite Holbrook's inequality as:
$w(A \otimes B) \leq w(A \circ \| B \| {\bf 1}_{n \times n})$, where
${\bf 1}_{n \times n}$ is the all-ones matrix and $A \circ A'$ denotes
the Schur/entrywise product of two complex matrices $A,A'$. Our bound,
when all $a_{ij} \geq 0$, says that
\begin{equation}\label{ECtilde}
w(A \otimes B) \leq w(C^\circ) = w \left( A \circ \|
B \| {\bf 1}_{n \times n} - ( \| B \| - w(B)) A \circ I_n \right),
\end{equation}
and this is at most Holbrook's bound $w(A) \| B \|$, via
entrywise monotonicity of the numerical radius~\eqref{p2} below.
This refinement is moreover strict, as the following example shows. 
Let
$A=\begin{bmatrix}
    1&0\\
    0&2
\end{bmatrix}$. Then $w(A\otimes B)=  w\left(\begin{bmatrix}
 		1w(B)&  0  \|B\| \\
 		0 \|B\|& 2w(B) \\
 \end{bmatrix}
 	\right) =2w(B)<  2\|B\|= w(A)\|B\|$ if $w(B)< \|B\|.$

\item Going beyond matrices with non-negative real entries: if $A\in
\mathcal{M}_n(\mathbb{C})$ is normal (with possibly complex entries),
then Theorem \ref{th4} refines \eqref{E0-1} via the weakly unitarily
invariant property.

\item Theorem~\ref{th4} is a special case of an even stronger result,
in the setting of a semi-Hilbertian space $(\mathcal{H}, \langle \cdot,
\cdot \rangle_P)$ for any positive operator $P$. See Theorem~\ref{th4-4}.
\end{enumerate}

We now move to the question of when equality is attained in~\eqref{E0-1}.
Gau and Wu showed in~\cite{Gau2019}, a somewhat technical
characterization of when~\eqref{E0-1} is an equality for $A \in
\mathcal{M}_n(\mathbb{C})$ and $B \in \mathcal{M}_m(\mathbb{C})$. We
provide a different, simpler to state characterization.
Moreover, in it we assume $A$ has non-negative entries, but at the same
time allow $B$ to be much more general:

\begin{theorem} \label{cor1}
Let $A=[a_{ij}] \in \mathcal{M}_n(\mathbb{C})$ and $B\in
\mathcal{B}(\mathcal{H}).$ If
$w(B)=\|B\|$ then $w(A\otimes B)= w(A) \|B\|$.
Conversely, if all $a_{ij}\geq 0, a_{ii}\neq 0$, and
$w(A\otimes B)= w(A) \|B\|$, then $w(B)=\|B\|.$
\end{theorem}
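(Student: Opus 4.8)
The plan is to prove the two implications separately: the forward implication by a short tensor-vector argument, and the converse by combining the refined bound of Theorem~\ref{th4} with a Perron--Frobenius description of the numerical radius of an entrywise non-negative matrix. For the forward implication, suppose $w(B)=\|B\|$. By compactness of the unit sphere of $\mathbb{C}^n$ there is a unit vector $x\in\mathbb{C}^n$ with $|\langle Ax,x\rangle|=w(A)$, and there are unit vectors $y_k\in\mathcal{H}$ with $|\langle By_k,y_k\rangle|\to w(B)$. Each $x\otimes y_k\in\mathbb{C}^n\otimes\mathcal{H}\cong\mathcal{H}^n$ is a unit vector, and $\langle (A\otimes B)(x\otimes y_k),x\otimes y_k\rangle=\langle Ax,x\rangle\langle By_k,y_k\rangle$ by the definition of the tensor inner product, so its modulus equals $w(A)\,|\langle By_k,y_k\rangle|\to w(A)w(B)=w(A)\|B\|$. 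Hence $w(A\otimes B)\ge w(A)\|B\|$, and the reverse inequality is Holbrook's bound~\eqref{E0-1}, giving equality.

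For the converse, assume $a_{ij}\ge 0$ for all $i,j$, $a_{ii}\ne 0$ (hence $a_{ii}>0$) for all $i$, and $w(A\otimes B)=w(A)\|B\|$. The key tool is the identity $w(M)=\lambda_{\max}\big(\tfrac12(M+M^T)\big)$, valid for any entrywise non-negative real matrix $M$: writing $|x|:=(|x_1|,\dots,|x_n|)$, the triangle inequality gives $|\langle Mx,x\rangle|\le\langle M|x|,|x|\rangle=\big\langle\tfrac12(M+M^T)|x|,|x|\big\rangle\le\lambda_{\max}\big(\tfrac12(M+M^T)\big)$, while a non-negative Perron eigenvector of the symmetric non-negative matrix $\tfrac12(M+M^T)$ attains equality. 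Apply this to $A$ and to the matrix $C^\circ$ of Theorem~\ref{th4} (which is entrywise non-negative), and set $\mathrm{Re}\,M:=\tfrac12(M+M^T)$; then $w(A)\|B\|=\lambda_{\max}(\|B\|\,\mathrm{Re}\,A)$ and $w(C^\circ)=\lambda_{\max}(\mathrm{Re}\,C^\circ)$, while a direct computation from the entries of $C^\circ$ gives
\[
\mathrm{Re}\,C^\circ \;=\; \|B\|\,\mathrm{Re}\,A \;-\; (\|B\|-w(B))\,D, \qquad D:=\mathrm{diag}(a_{11},\dots,a_{nn}).
\]

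Now feed in the chain $w(A)\|B\|=w(A\otimes B)\le w(C)\le w(C^\circ)$ of Theorem~\ref{th4}: evaluating a unit top-eigenvector $v$ of $\mathrm{Re}\,C^\circ$ yields
\[
w(A)\|B\| \;\le\; \lambda_{\max}(\mathrm{Re}\,C^\circ) \;=\; \langle\|B\|\,\mathrm{Re}\,A\,v,v\rangle-(\|B\|-w(B))\langle Dv,v\rangle \;\le\; w(A)\|B\|-(\|B\|-w(B))\min_i a_{ii},
\]
using $\langle\|B\|\,\mathrm{Re}\,A\,v,v\rangle\le\lambda_{\max}(\|B\|\,\mathrm{Re}\,A)=w(A)\|B\|$, $\langle Dv,v\rangle\ge\min_i a_{ii}$, and $\|B\|-w(B)\ge 0$. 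Therefore $(\|B\|-w(B))\min_i a_{ii}\le 0$; since $\min_i a_{ii}>0$, this forces $w(B)=\|B\|$.

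The only genuine choice in the argument is how to extract a \emph{strict} drop, and I expect that to be the (mild) crux: the naive route through entrywise monotonicity of $w$ (inequality~\eqref{p2}) degrades the strict entrywise inequality on the diagonal into merely a non-strict inequality for $w$, so one must instead pass through the Perron--Frobenius identity $w(M)=\lambda_{\max}(\mathrm{Re}\,M)$ and quantify the diagonal perturbation by the Rayleigh-quotient (Weyl) estimate above. The remaining steps -- the tensor-vector computation and the bookkeeping with the entries of $C^\circ$ -- are routine.
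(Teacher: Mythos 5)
Your proof is correct, and it follows the paper's overall skeleton --- the forward direction via the standard sandwich $w(A)w(B)\le w(A\otimes B)\le w(A)\|B\|$, and the converse via the chain $w(A\otimes B)\le w(C)\le w(C^\circ)$ from Theorem~\ref{th4} together with the identity $w(M)=\tfrac12 r(M+M^*)$ for entrywise non-negative $M$ (the paper's~\eqref{p1}) --- but it diverges at the crucial rigidity step. The paper first concludes $w(C^\circ)=w(A)\|B\|$ exactly, by sandwiching, rewrites both sides as operator norms of symmetrized non-negative matrices, observes that $C^\circ+(C^\circ)^*$ is precisely $\|B\|(A+A^*)$ with its diagonal scaled by $\lambda=w(B)/\|B\|$, and then invokes Lemma~\ref{need}, whose proof rests on the singular-value perturbation bound $\sigma_{\max}(C)+\sigma_{\min}(D)\le\sigma_{\max}(C+D)$ cited from Zhang. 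You never need Lemma~\ref{need} or the exact equality $w(C^\circ)=w(A)\|B\|$: evaluating the Rayleigh quotient of $\operatorname{Re}C^\circ=\|B\|\operatorname{Re}A-(\|B\|-w(B))D$ at a top unit eigenvector (equivalently, Weyl's inequality $\lambda_{\max}(X-Y)\le\lambda_{\max}(X)-\lambda_{\min}(Y)$ with $Y=(\|B\|-w(B))D\succeq 0$) yields the quantitative drop $(\|B\|-w(B))\min_i a_{ii}\le w(A)\|B\|-w(A\otimes B)$, which forces $w(B)=\|B\|$ since $\min_i a_{ii}>0$. Your route is more self-contained (no external singular-value inequality, no auxiliary lemma) and, as a byproduct, gives a quantitative refinement of Holbrook's bound for entrywise non-negative $A$; what the paper's route buys is the reusable norm-rigidity statement of Lemma~\ref{need}, valid for arbitrary complex matrices with nonzero diagonal, which it applies again in the semi-Hilbertian setting (Proposition~\ref{cor1-4-}).
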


\blue{
\subsection{Main results 2: Schur powers and partial diagonalizability}

We now turn to some applications of results bounding $w(A \otimes B)$.}
First, we study the numerical radius of Schur/entrywise powers of complex
matrices $w(A^{\circ m})$. In Proposition~\ref{th10}, we show that if
$A\in \mathcal{M}_n(\mathbb{C}),$ then $w(A^{\circ m}) \leq  w(A)
\|A\|^{m-1} \leq 2^{m-1} w(A)\ \forall m \geq 1$. When $w(A) = \|A\|$,
the first inequality becomes
$w(A^{\circ m}) \leq w^m(A)$; in Theorem~\ref{Tref}(2) we completely
characterize when this is an \blue{equality:

\begin{theorem}
Suppose $w(A) = \|A\|$ for $A \in \mathcal{M}_n(\mathbb{C})$, and $m \geq 1$.
Then $w(A^{\circ m}) = w^m(A)$ if and only if $A^{\otimes m}$ has an
eigenvector in the span of ${\bf e}_1^{\otimes m}, \ldots, {\bf
e}_n^{\otimes m}$ with eigenvalue $\|A\|^m$. Moreover, if this holds then
$w(A^{\circ m'}) = w(A)^{m'}$ for all $1 \leq m' \leq m$.
\end{theorem}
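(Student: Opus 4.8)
The core object is the Schur power $A^{\circ m}$, and the natural bridge to Kronecker products is the classical fact that $A^{\circ m}$ is a compression of $A^{\otimes m}$: if $S \colon \mathbb{C}^n \hookrightarrow (\mathbb{C}^n)^{\otimes m}$ is the isometry sending ${\bf e}_i \mapsto {\bf e}_i^{\otimes m}$, then $A^{\circ m} = S^* A^{\otimes m} S$. The plan is to use this identity in both directions. For the inequality $w(A^{\circ m}) \le w^m(A)$ (already known from Proposition~\ref{th10} when $w(A)=\|A\|$), one observes that the numerical range satisfies $W(A^{\circ m}) = W(S^* A^{\otimes m} S) \subseteq W(A^{\otimes m})$, and then invokes Holbrook's bound iterated, $w(A^{\otimes m}) \le w(A)\|A^{\otimes (m-1)}\| = w(A)\|A\|^{m-1} = w(A)^m$ under the hypothesis $w(A)=\|A\|$. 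So $w(A^{\circ m}) \le w(A)^m$ always holds here; the content is the equality characterization.

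For the equality case, suppose first that $A^{\otimes m}$ has a unit eigenvector $v$ in the span $V_m := \mathrm{span}\{{\bf e}_1^{\otimes m},\dots,{\bf e}_n^{\otimes m}\}$ with eigenvalue $\|A\|^m$. Then $v = Su$ for a unit vector $u \in \mathbb{C}^n$, so $\langle A^{\circ m} u, u\rangle = \langle S^* A^{\otimes m} S u, u\rangle = \langle A^{\otimes m} v, v\rangle = \|A\|^m = w(A)^m$; combined with the upper bound this forces $w(A^{\circ m}) = w(A)^m$. Conversely, suppose $w(A^{\circ m}) = w(A)^m = \|A\|^m$. Since $\mathbb{C}^n$ is finite-dimensional, the supremum defining $w(A^{\circ m})$ is attained at some unit $u_0$, i.e. $|\langle A^{\circ m} u_0, u_0\rangle| = \|A\|^m$. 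Setting $v_0 = S u_0$ (a unit vector in $V_m$), we get $|\langle A^{\otimes m} v_0, v_0 \rangle| = \|A\|^m = w(A^{\otimes m})$, so $v_0$ is a maximizing vector for the numerical radius of $A^{\otimes m}$. The key classical input — the same one underlying the Goldberg--Zwas / partial-diagonalizability circle of ideas invoked earlier in the paper — is that when $w(T) = \|T\|$ for a matrix $T$, any unit vector $v_0$ with $|\langle T v_0, v_0\rangle| = \|T\|$ must be an eigenvector of $T$ with eigenvalue of modulus $\|T\|$ (one shows $\|Tv_0\| \le \|T\| = |\langle Tv_0,v_0\rangle| \le \|Tv_0\|$, so equality holds throughout in Cauchy--Schwarz, forcing $Tv_0 = \lambda v_0$ with $|\lambda| = \|T\|$). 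Applying this to $T = A^{\otimes m}$: after a unimodular rotation we may take the eigenvalue to be $\|A\|^m$ itself (here one uses that $\|A^{\otimes m}\| = \|A\|^m$ and that the eigenvalue of $A^{\otimes m}$ of maximal modulus attained on $V_m$ can be normalized — this needs a small argument, e.g. that $A/\overline{\lambda/|\lambda|}^{1/m}$ or rather replacing $A$ by $e^{i\theta}A$ preserves all hypotheses). This gives the desired eigenvector $v_0 \in V_m$ with eigenvalue $\|A\|^m$.

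For the last sentence — if the equality holds at level $m$ then $w(A^{\circ m'}) = w(A)^{m'}$ for all $1 \le m' \le m$ — the idea is to descend: given the eigenvector $v = \sum_i c_i {\bf e}_i^{\otimes m}$ of $A^{\otimes m}$ with eigenvalue $\|A\|^m$, one produces from it an eigenvector of $A^{\otimes m'}$ in $V_{m'}$ with eigenvalue $\|A\|^{m'}$. Writing out $A^{\otimes m} v = \|A\|^m v$ coordinatewise gives, for each multi-index, a relation among the $c_i$ and entries of $A$; the natural guess is that $w := \sum_i c_i {\bf e}_i^{\otimes m'}$ works, using that the defining relations for $m$ factor through those for $m'$ (tensoring the eigen-equation for $A^{\otimes m'}$ with $A^{\otimes(m-m')}$). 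I would verify this by a direct computation with the compression identity: $A^{\circ m} = (A^{\circ m'}) \circ (A^{\circ(m-m')})$, and a known Schur-product sub-multiplicativity-type fact relating the maximizing vectors. Alternatively, and perhaps more cleanly, from $w(A^{\circ m}) = \|A\|^m$ one can argue that the unit vector $u_0$ above also nearly maximizes $w(A^{\circ m'})$ via $\|A\|^m = |\langle A^{\circ m} u_0,u_0\rangle| \le \|A^{\circ(m-m')}\|\,\| \cdots \| \le \|A\|^{m-m'} w(A^{\circ m'})$ — but this only gives $w(A^{\circ m'}) \ge \|A\|^{m'}$ after checking the middle inequality carefully, which is exactly the delicate point.

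**Main obstacle.** The routine parts (compression identity, Cauchy--Schwarz equality forcing an eigenvector, iterating Holbrook) are standard. The genuine difficulty is the \emph{descent} step establishing $w(A^{\circ m'}) = w(A)^{m'}$ for $m' < m$: one must track how an eigenvector of $A^{\otimes m}$ supported on the "diagonal" subspace $V_m$ restricts to an eigenvector of $A^{\otimes m'}$ on $V_{m'}$, and verify the eigenvalue is exactly $\|A\|^{m'}$ rather than something smaller in modulus. I expect this to require either a careful coordinate analysis of the eigen-equation $A^{\otimes m} v = \|A\|^m v$ (exploiting that $\|A^{\otimes k}\| = \|A\|^k$ so no "loss" can occur in any tensor factor), or an appeal to the structure theory from Theorem~\ref{Tref}(1) / the Goldberg--Zwas partial-diagonalizability result, which pins down the maximal-modulus eigenstructure of $A$ and hence of all its tensor powers.
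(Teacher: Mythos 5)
Your proof of the equivalence itself is correct and is essentially the paper's own route: you realize $A^{\circ m}=S^*A^{\otimes m}S$ as a (principal-submatrix) compression, note $w(A^{\otimes m})=\|A^{\otimes m}\|=\|A\|^m$, and in the converse direction take a maximizing unit vector $u_0$ and run the Cauchy--Schwarz equality argument on $v_0=Su_0$; this is exactly the content of Lemma~\ref{lem-Ref} in the paper. One caveat: that argument only yields an eigenvalue $\lambda$ with $|\lambda|=\|A\|^m$, not $\lambda=\|A\|^m$, and your proposed fix of replacing $A$ by $e^{i\theta}A$ proves the claim for the rotated matrix, not for $A$ itself (consider $A=(-1)$, $m=1$). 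The precise version of the result (Theorem~\ref{Tref}(2)) is stated with $|\lambda|=\|A\|^m$, so the statement should be read that way; the rotation remark should not be presented as closing this gap for the original $A$.

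The genuine gap is the final assertion --- the descent to all $1\le m'\le m$ --- which you explicitly leave unresolved. Your route (a), that the same coefficients $c_i$ of the eigenvector in $V_m$ give an eigenvector of $A^{\otimes m'}$ in $V_{m'}$, is unverified and not obviously true; your route (b) is the right idea but you stop precisely at ``the middle inequality \dots\ is exactly the delicate point.'' That inequality is in fact already available in the paper, and no appeal to Theorem~\ref{Tref}(1) or Goldberg--Zwas is needed: since $A^{\circ m}=A^{\circ(m-1)}\circ A$ is a principal submatrix of $A^{\circ(m-1)}\otimes A$, padding by zeros gives $w(A^{\circ m})\le w(A^{\circ(m-1)}\otimes A)$; because $w(A)=\|A\|$, \eqref{p3} gives $w(A^{\circ(m-1)}\otimes A)=w(A^{\circ(m-1)})\,w(A)$, and Proposition~\ref{th10}(2) bounds this by $w^m(A)$. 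The hypothesis $w(A^{\circ m})=w^m(A)$ forces equality throughout, hence $w(A^{\circ(m-1)})=w^{m-1}(A)$, and downward induction finishes. (Equivalently, your chain can be completed as $w(A^{\circ m})\le w(A^{\circ m'})\,\|A^{\circ(m-m')}\|\le w(A^{\circ m'})\,\|A\|^{m-m'}$, using \eqref{n-34} together with $\|A^{\circ k}\|\le\|A^{\otimes k}\|=\|A\|^k$.) As submitted, however, the proposal does not prove the ``Moreover'' clause of the theorem.
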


An interesting intermediate step here -- see Theorem~\ref{Tref}(1) -- is:

\begin{theorem}
Suppose $A$ is a complex square matrix whose spectral norm equals its
spectral radius. If $\lambda$ is any eigenvalue of $A$ of maximum
modulus, then every Jordan block for $\lambda$ is $1 \times 1$ (i.e., $A$
is ``partially diagonalizable'').
\end{theorem}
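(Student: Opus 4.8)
The statement is: if $A$ is a complex square matrix with $\|A\| = \rho(A)$ (spectral norm equals spectral radius), and $\lambda$ is an eigenvalue of maximal modulus, then the Jordan block(s) for $\lambda$ are all $1\times 1$. By scaling we may assume $\|A\| = \rho(A) = 1$, so $|\lambda| = 1$. The plan is to argue by contradiction: suppose some Jordan block for $\lambda$ has size $\geq 2$. Then $A$ is unitarily similar (Schur triangularization) to an upper-triangular matrix containing a $2\times 2$ principal submatrix of the shape $\begin{bmatrix} \lambda & * \\ 0 & \lambda \end{bmatrix}$ with $* \neq 0$ sitting on the relevant coordinate pair — more precisely, restricting to the generalized eigenspace and choosing a Jordan basis, $A$ has an invariant pair of coordinates on which it acts as $\begin{bmatrix} \lambda & 1 \\ 0 & \lambda \end{bmatrix}$. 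The goal is to show this forces $\|A\| > 1$, contradicting $\|A\| = 1$.

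The cleanest route I would take uses the power-boundedness criterion: $\|A\| = \rho(A) = 1$ forces $\|A^k\|$ to be bounded (indeed one expects $\|A^k\| \le \|A\|^k = 1$), whereas a nontrivial Jordan block for an eigenvalue on the unit circle makes $\|A^k\| \to \infty$ polynomially in $k$. Concretely: $A^k$ restricted to the generalized $\lambda$-eigenspace, in the Jordan basis, has the $2\times 2$ block $\begin{bmatrix} \lambda^k & k\lambda^{k-1} \\ 0 & \lambda^k \end{bmatrix}$, whose norm is $\geq k|\lambda|^{k-1} = k \to \infty$. But $\|A^k\| \leq \|A\|^k = 1$ for all $k$ — contradiction. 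This is short and self-contained; the only mild subtlety is passing from "$A$ has a size-$\geq 2$ Jordan block for $\lambda$" to "a fixed $2\times 2$ submatrix of $A^k$ (in a suitable orthonormal-adjacent basis) grows", which is handled by writing $A = S J S^{-1}$ and noting $\|A^k\| \geq \|S\|^{-1}\|S^{-1}\|^{-1} \|J^k\|$, so $\|J^k\|$ bounded is equivalent to $\|A^k\|$ bounded.

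Alternatively — and this is presumably closer to the paper's "using our approach (using $w(A)$)" phrasing — I would deduce it from the numerical radius machinery developed earlier. One has $\rho(A) \leq w(A) \leq \|A\|$ always, so the hypothesis $\|A\| = \rho(A)$ forces $w(A) = \|A\| = \rho(A)$. Now invoke the companion fact (the case $\mathcal{K} = \mathbb{C}$, or a direct argument) that $w(A) = \|A\|$ together with $w(A) = \rho(A)$ pins down the maximal-modulus eigenvalues: a unit vector $x$ with $|\langle Ax, x\rangle| = \|A\|$ must actually satisfy $Ax = \lambda x$ with $|\lambda| = \|A\|$ (this is the standard fact that the numerical radius is attained only at eigenvectors when it equals the norm — if $|\langle Ax,x\rangle| = \|Ax\|\|x\|$ then $Ax \parallel x$ by Cauchy–Schwarz equality, and $|\langle Ax,x\rangle| = \|A\|$ forces $\|Ax\| = \|A\|$). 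Then restrict $A$ to a Jordan block: if $A$ acts on a $k$-dimensional invariant subspace as a single Jordan block $J_k(\lambda)$, then $w(J_k(\lambda)) = |\lambda| + \cos\frac{\pi}{k+1}$ (the classical formula for the numerical radius of a nilpotent-shifted Jordan block), which strictly exceeds $|\lambda| = \rho(A) = w(A)$ once $k \geq 2$ — but $w$ is monotone under compression to invariant/reducing subspaces in the relevant sense, giving $w(A) \geq w(J_k(\lambda)) > w(A)$, a contradiction.

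**Main obstacle.** The genuinely delicate point in the second approach is justifying the inequality $w(A) \geq w(J_k(\lambda))$ when $J_k(\lambda)$ is the action of $A$ on a (merely invariant, not reducing) subspace: compression of the numerical range to a subspace only gives $W(P A P|_V) \subseteq W(A)$ when $V$ is a subspace and $P$ the orthogonal projection, and $PAP|_V$ need not equal $J_k(\lambda)$ unless we choose the subspace carefully. The fix is to use Schur triangularization so that the generalized $\lambda$-eigenspace is spanned by the first few vectors of an orthonormal basis; then the top-left $k\times k$ compression is upper triangular with $\lambda$'s on the diagonal and has a superdiagonal entry forced to be nonzero by the Jordan structure, and one checks directly that such a matrix has numerical radius $> |\lambda|$ (e.g. test on $\frac{1}{\sqrt 2}(\mathbf{e}_i \pm \mathbf{e}_{i+1})$ for the relevant adjacent pair). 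Honestly, the power-boundedness argument sidesteps all of this and is what I would write as the primary proof, keeping the numerical-radius version as a remark — but since the paper advertises the $w(A)$ approach, I would present the compression argument as the main one and be careful to set up the orthonormal Schur basis before invoking monotonicity of $w$.
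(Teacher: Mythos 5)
Your primary (power-boundedness) argument is correct and complete: with $\|A\|=r(A)=1$ after scaling (the case $A=0$ being trivial), submultiplicativity gives $\|A^k\|\leq 1$ for all $k$, while a Jordan block of size $\geq 2$ for a unit-modulus eigenvalue forces $\|J^k\|\geq k$ and hence $\|A^k\|\geq \|J^k\|/(\|S\|\,\|S^{-1}\|)\to\infty$ -- a genuine contradiction. However, this is a genuinely different route from the paper's. The paper works under the (formally weaker, in fact equivalent) hypothesis $w(A)=\|A\|$ and proves something stronger: via Lemma~\ref{lem-Ref} (a Cauchy--Schwarz attainment argument producing eigenvectors of modulus-$\|A\|$ eigenvalues) and an induction that adjoins these eigenvectors one at a time into a unitary $U_l$, using a row-norm estimate of the form $\|A\|\geq\sqrt{|a_j|^2+\|B^*A^*u_j\|^2}$ to force the off-diagonal entries in those rows to vanish, it shows $A$ is \emph{unitarily} similar to ${\rm diag}(a_1,\dots,a_k)\oplus A'$ with $w(A')<\|A\|$; the $1\times 1$ Jordan blocks then follow from uniqueness of the Jordan form. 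That orthogonal splitting (not just partial diagonalizability) is exactly what part~(2) of Theorem~\ref{Tref} and Corollary~\ref{Cref} consume downstream, so the paper's route buys more, whereas yours is shorter, more elementary, and closer in spirit to the original Goldberg--Zwas proof (which the paper explicitly notes does not use the numerical radius). Your secondary sketch via compressions in a Schur basis is also workable and closer to the paper's ``use $w(A)$'' advertisement -- the $2\times 2$ compression test you describe already gives $w(A)\geq|\lambda|+|t_{ij}|/2$ and suffices without the $\cos\frac{\pi}{k+1}$ formula -- but it still differs from the paper's actual inductive unitary reduction.
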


This leads to a natural speculative generalization of the Spectral
Theorem for normal operators; see Section~\ref{Sspeculation}.

\subsection{Main results 3: $\ell_p$-norm bounds}

Another application of the above results} (see Section~\ref{sec-lpnorm})
is to provide precise values for the $\ell_p$ operator norm and numerical
radius of (dilations of) doubly stochastic matrices.
Recall for $A\otimes B\in \mathcal{B}(\mathcal{H}^n)$ that
\begin{equation}\label{Eopnorm}
\| A\otimes B\|_p := \sup\left\{  \frac{ \| (A\otimes B) x \|_p}{\|x\|_p}
:  { x\in \mathcal{H}^n,\, x\neq 0} \right\}, \qquad 1\leq p \leq \infty
\end{equation}
is its $\ell_p$ operator norm, where $\|x\|_p=\left(
\sum_{j=1}^n\|x_j\|^p\right)^{1/p}$ for $x=(x_1,x_2,\ldots,x_n)^T \in
\mathcal{H}^n$.

In this paper we strengthen some results of Bouthat, Khare, Mashreghi,
and Morneau-Gu\'erin~\cite{Khare}. One of these computed the $\ell_p$
operator norm of all circulant matrices with non-negative entries. We now
extend this significantly:
\begin{itemize}
\item A twofold strengthening is that we work with Kronecker products $A
\otimes B$, where $A\in \mathcal{M}_n(\mathbb{C})$ and $B \in
\mathcal{B}(\mathcal{H})$ are arbitrary.

\item In this setting, we obtain lower and upper bounds for $w(A \otimes
B)$ and $\| A \otimes B \|_p$ for $p \in [1,\infty]$. Moreover, these
bounds are ``tight'' -- i.e., they coincide -- when $A$ is a dilation of
a doubly stochastic matrix (i.e.\ $A$ has all entries in $[0,\infty)$,
and all row and columns sums are equal). \blue{This includes all
circulant $A$ with entries in $[0,\infty)$, recovering the exact
calculations in~\cite{Khare}.}
\end{itemize}
 
\begin{theorem}\label{thm4-1}
Let $A=[a_{ij}]\in \mathcal{M}_n(\mathbb{C})$ 
and $B\in \mathcal{B}(\mathcal{H})$ be arbitrary. Then
\begin{equation}\label{E14}
w(A) w(B) \leq w(A \otimes B) \leq \| A \| w(B) \leq 
\left( \max_{1 \leq i \leq n} \sum_{j=1}^n |a_{ij}| \right)^{1/2} 
\left( \max_{1 \leq j \leq n} \sum_{i=1}^n |a_{ij}| \right)^{1/2} w(B).
\end{equation}
Next, given $1 \leq p \leq \infty$ we define $q$ via: $\frac{1}{p} +
\frac{1}{q} = 1$. Then,
\begin{equation}\label{E15}
\min_{1 \leq i \leq n} \left| \sum_{j=1}^n a_{ij} \right| \| B \|
\leq \| A \otimes B \|_p \leq
\left( \max_{1 \leq i \leq n} \sum_{j=1}^n |a_{ij}| \right)^{1/q} 
\left( \max_{1 \leq j \leq n} \sum_{i=1}^n |a_{ij}| \right)^{1/p}
\| B \|.
\end{equation}
\end{theorem}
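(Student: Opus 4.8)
The plan is to prove, one at a time, the four inequalities comprising \eqref{E14} and the two comprising \eqref{E15}; beyond Holbrook's bound \eqref{E0-1} (in the form valid for operators on two possibly different Hilbert spaces, as recalled just after its statement) the only analytic tool needed is Hölder's inequality.

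For \eqref{E14}: the lower bound $w(A)w(B)\le w(A\otimes B)$ follows by testing $A\otimes B$ on product vectors. If $x\in\mathbb C^n$ and $y\in\mathcal H$ are unit vectors then $x\otimes y\in\mathcal H^n$ is a unit vector with $\langle(A\otimes B)(x\otimes y),x\otimes y\rangle=\langle Ax,x\rangle\langle By,y\rangle$, so taking suprema over $x$ and over $y$ separately (the quantity factors) gives the bound. For $w(A\otimes B)\le\|A\|w(B)$, observe that $A\otimes B$ and $B\otimes A$ are unitarily equivalent via the flip unitary $x\otimes y\mapsto y\otimes x$, hence have equal numerical radius; applying \eqref{E0-1} to $B\otimes A$ yields $w(A\otimes B)=w(B\otimes A)\le w(B)\|A\|$. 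Finally $\|A\|\le\|\,|A|\,\|$, where $|A|=[|a_{ij}|]$ (since $|\langle Ax,y\rangle|\le\langle|A|\,|x|,|y|\rangle$ entrywise), and $\|\,|A|\,\|\le(\max_i\sum_j|a_{ij}|)^{1/2}(\max_j\sum_i|a_{ij}|)^{1/2}$ is the $p=2$ instance of the scalar estimate proved below (equivalently, Schur's test with unit weights).

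For the lower bound in \eqref{E15}, test $A\otimes B$ on $x=\mathbf 1\otimes y=(y,\dots,y)^T$ with $\|y\|=1$: the $i$-th coordinate of $(A\otimes B)x$ is $(\sum_j a_{ij})By$, so $\|(A\otimes B)x\|_p=\|By\|\,(\sum_i|\sum_j a_{ij}|^p)^{1/p}$ and $\|x\|_p=n^{1/p}$, whence the quotient in \eqref{Eopnorm} equals $\|By\|\,(\tfrac1n\sum_i|\sum_j a_{ij}|^p)^{1/p}$ for $p<\infty$ (with the $\ell_\infty$-norm replacing the averaged sum when $p=\infty$); in either case this is $\ge\|By\|\min_i|\sum_j a_{ij}|$, and taking the supremum over unit $y$ replaces $\|By\|$ by $\|B\|$. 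For the upper bound in \eqref{E15}, first reduce to a scalar matrix: for $x=(x_1,\dots,x_n)^T\in\mathcal H^n$ the triangle inequality gives $\|((A\otimes B)x)_i\|=\|\sum_j a_{ij}Bx_j\|\le\|B\|\sum_j|a_{ij}|\,\|x_j\|$, so writing $v=(\|x_1\|,\dots,\|x_n\|)^T\in\mathbb R^n_{\ge0}$ we obtain $\|(A\otimes B)x\|_p\le\|B\|\,\|\,|A|v\,\|_p\le\|B\|\,\|\,|A|\,\|_p\,\|x\|_p$, i.e.\ $\|A\otimes B\|_p\le\|B\|\,\|\,|A|\,\|_p$, where $\|\,|A|\,\|_p$ denotes the $\ell_p\to\ell_p$ operator norm of the scalar matrix $|A|$. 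It then remains to show that any nonnegative matrix $M=[m_{ij}]$ satisfies $\|M\|_p\le(\max_i\sum_j m_{ij})^{1/q}(\max_j\sum_i m_{ij})^{1/p}$: for $v\ge0$, apply Hölder with exponents $q,p$ to the splitting $m_{ij}v_j=m_{ij}^{1/q}\cdot(m_{ij}^{1/p}v_j)$ to get $\sum_j m_{ij}v_j\le(\sum_j m_{ij})^{1/q}(\sum_j m_{ij}v_j^p)^{1/p}$, raise to the $p$-th power, sum over $i$, and interchange the order of summation; this is also immediate from Riesz--Thorin interpolation between the $\ell_1\to\ell_1$ and $\ell_\infty\to\ell_\infty$ bounds. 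Taking $M=|A|$ yields \eqref{E15}, and the same estimate with $p=2$ supplies the last inequality of \eqref{E14}.

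The arguments are essentially routine; the only points demanding care are the reduction of the operator-valued norm $\|A\otimes B\|_p$ to the scalar operator norm $\|\,|A|\,\|_p$ on $\mathbb C^n$, the independence of the two suprema in the lower bound of \eqref{E14}, and the bookkeeping of the exponents $1/p$ and $1/q$ in the Hölder step so that the endpoint cases $p=1,\infty$ are covered. Specializing $A$ to a dilation of a doubly stochastic matrix (all row and column sums equal to a common $s\ge0$) collapses both chains to equalities, recovering $w(A\otimes B)=s\,w(B)$ and $\|A\otimes B\|_p=s\|B\|$.
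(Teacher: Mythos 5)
Your proposal is correct and follows essentially the same route as the paper: the numerical-radius chain comes from the product-vector lower bound plus Holbrook's inequality applied after the flip $A\otimes B \cong B\otimes A$, the $\ell_p$ upper bound is the same H\"older/Schur-test computation (you merely factor it through the intermediate estimate $\|A\otimes B\|_p \le \|B\|\,\|\,[|a_{ij}|]\,\|_p$ rather than running H\"older inline), the lower bound uses the same test vectors $(y,\dots,y)^T$ (with a supremum over unit $y$ in place of the paper's norming sequence for $\|B\|$), and the final inequality of \eqref{E14} is the same $p=2$ specialization. No gaps; the endpoint cases $p=1,\infty$ are handled as in the paper.
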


As promised, we now record the tightness of these bounds for doubly
stochastic matrices $A$:

\begin{cor}\label{Copnorm}
If $B \in \mathcal{B}(\mathcal{H})$, and $A \in
\mathcal{M}_n(\mathbb{C})$ has non-negative real entries and is $k$ times
a doubly stochastic matrix for some \blue{$k \in [0,\infty)$}, then
$\|A\otimes B\|_p=k \|B\| \,\, \text{ and } \,\, w(A\otimes B)=k w(B).$
\end{cor}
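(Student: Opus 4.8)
The plan is to read off Corollary~\ref{Copnorm} directly from the two-sided bounds of Theorem~\ref{thm4-1}, by checking that for $A = kD$ with $D$ doubly stochastic these bounds collapse to a single value. Write $a_{ij} = k d_{ij} \ge 0$. Since every row sum and every column sum of $D$ equals $1$ and the entries of $A$ are non-negative, one has immediately
\[
\max_{1\le i\le n}\sum_{j=1}^n|a_{ij}| \;=\; \max_{1\le j\le n}\sum_{i=1}^n|a_{ij}| \;=\; \min_{1\le i\le n}\Bigl|\sum_{j=1}^n a_{ij}\Bigr| \;=\; k .
\]
The degenerate case $k=0$ forces $A=0$, and then both sides of both claimed equalities are $0$; so one may assume $k>0$ (and also that $B\neq 0$, the case $B=0$ being trivial as well).

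For the $\ell_p$ statement I would substitute the three equalities above into~\eqref{E15}: the lower bound becomes $k\|B\|$, and — using $\tfrac1p+\tfrac1q=1$ — the upper bound becomes $k^{1/q}k^{1/p}\|B\| = k\|B\|$. Hence $\|A\otimes B\|_p = k\|B\|$ for every $p\in[1,\infty]$.

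For the numerical radius, the rightmost term of~\eqref{E14} evaluates to $k^{1/2}k^{1/2}w(B) = k\,w(B)$, so it remains only to verify that the leftmost term $w(A)\,w(B)$ is already $\ge k\,w(B)$, i.e.\ that $w(A)\ge k$. This is the one place the doubly stochastic hypothesis enters: the normalized all-ones vector $x = n^{-1/2}(1,\dots,1)^T$ satisfies $Ax = kx$ (each coordinate of $Ax$ is a row sum of $A$), so $k = \langle Ax,x\rangle\in W(A)$ and therefore $w(A)\ge k$. Plugging this into~\eqref{E14} gives the chain $k\,w(B)\le w(A)\,w(B)\le w(A\otimes B)\le k\,w(B)$, which pinches and yields $w(A\otimes B) = k\,w(B)$.

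I do not expect any genuine obstacle here: all the analytic content sits in Theorem~\ref{thm4-1}, and the corollary is simply the assertion that those bounds are sharp on (dilations of) doubly stochastic matrices. The only input one must supply by hand is a unit vector at which $w(A)$ attains the value $k$, and the Perron-type eigenvector $(1,\dots,1)^T$ furnishes this for free.
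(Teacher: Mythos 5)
Your proof is correct, and it follows the paper's overall strategy of reading the corollary off the two-sided bounds of Theorem~\ref{thm4-1}; the $\ell_p$ half is identical to the paper's ("clear from~\eqref{E15}"). The one place you diverge is the numerical-radius half. The paper reduces that claim to showing $w(A)=k$ exactly, which it proves via the identity~\eqref{p1} ($w(A)=\tfrac12 r(A+A^*)$ for entrywise nonnegative $A$) together with the Gershgorin circle theorem applied to $(2k)^{-1}(A+A^*)$, using $(1,\dots,1)^T$ as an eigenvector of $A+A^*$. You instead observe that only the lower bound $w(A)\ge k$ is needed, and you get it directly from the numerical range: the normalized all-ones vector is an eigenvector of $A$ itself with eigenvalue $k$, so $k=\langle Ax,x\rangle\in W(A)$, and then the rightmost estimate in~\eqref{E14} (which evaluates to $k\,w(B)$ by the row/column-sum hypothesis) pinches the chain. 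This is marginally more elementary — it avoids both~\eqref{p1} and Gershgorin — while the paper's route has the side benefit of recording the exact value $w(A)=k$ rather than just the inequality (though of course $w(A)=k$ also follows a posteriori from your pinching). Both arguments ultimately rest on the same Perron-type vector $(1,\dots,1)^T$ and the sharpness of the bounds in Theorem~\ref{thm4-1}.
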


For example, if $\mathcal{H} = \mathbb{C}$ and $B: \mathbb{C}\to
\mathbb{C}$ is the identity operator, then $ \| A\|_p=\| A\otimes B\|_p$
is the $\ell_p$ operator norm~\eqref{Eopnorm} of $A\in
\mathcal{M}_n(\mathbb{C})$. Thus Corollary~\ref{Copnorm} recovers this
norm for all rescaled doubly stochastic matrices, strictly subsuming the
circulant non-negative case in~\cite{Khare}.

\begin{proof}
The result is immediate if $k=0$ since $A=0$, so we assume $k>0$.
If $k^{-1}A$ is doubly stochastic, the first assertion is clear
from~\eqref{E15}, and the second assertion is equivalent to its special
case (for $B = (1) \in \mathcal{B}(\mathbb{C})$), i.e.\ that $w(A) = k$.
Now $(2k)^{-1} (A + A^*)$ is doubly stochastic, so its spectral radius is
at most $1$ by the Gershgorin circle theorem; as $1$ is an eigenvalue
(with eigenvector $(1,\dots,1)^T$), $r(A+A^*) = 2k$. Now use~\eqref{p1}
below.
\end{proof}

\subsection*{Organization of the paper}

  In Section~\ref{sec-ten}, we prove Theorems~\ref{th4} and~\ref{cor1}
and deduce other related results.
  In Section~\ref{sec-semi}, we extend these results for Kronecker
products, to the setting of a semi-Hilbertian space $\mathbb{C}^n\otimes
\mathcal{H}$, induced by the operator matrix $I_n\otimes P$ for arbitrary
positive $P \in \mathcal{B}(\mathcal{H})$.
  In Section~\ref{sec-had}, by applying numerical radius inequalities for
Kronecker products, we study numerical radius (in)equalities for the
Schur/entrywise product of matrices.
  In Section~\ref{sec-lpnorm}, we prove Theorem~\ref{thm4-1} (which yields
Corollary~\ref{Copnorm}). We also compute $\| A \otimes B \|_2$ for $A$ a
circulant matrix with diagonals $-a$ and off-diagonals $b$, for arbitrary
complex $a,b$ (extending the case of $a,b \in [0,\infty)$ in
\cite{Khare}).
  In Section~\ref{roots}, using the numerical radius of circulant
matrices, we obtain a new estimation formula for the roots of an
arbitrary complex polynomial.
  \blue{We end with some natural questions that arise from the results in
  this work, in the concluding Section~\ref{Sspeculation}.}

\section{Numerical radius inequalities for Kronecker products}\label{sec-ten}

In this section we obtain numerical radius bounds for $A\otimes B$ that
strengthen~\eqref{E0-1}, and then completely characterize the equality
of~\eqref{E0-1} when all $a_{ij} \in [0,\infty)$ (Theorems~\ref{th4}
and~\ref{cor1}).

Begin by noting from the definitions (see
Definition~\ref{D12}) that for any two Hilbert spaces
$\mathcal{K},\mathcal{H}$ and operators $A \in \mathcal{B}(\mathcal{K}),
B \in \mathcal{B}(\mathcal{H})$, we have
\[
\langle (A \otimes B)(x \otimes y), x' \otimes y' \rangle =
\langle (B \otimes A)(y \otimes x), y' \otimes x' \rangle, \quad \forall x,x'
\in \mathcal{K}, \ y,y' \in \mathcal{H}.
\]
Taking sums and limits, $w(A\otimes B)=w(B\otimes A)$. Thus,
from~\eqref{E0-1} (generalized to \cite[Equation (2)]{BhuniaPMSC}), we
obtain $w(A\otimes B) \leq  w(B)\|A\|.$ Also, it is easy to see that
$w(A\otimes B) \geq  w(A)w(B),$ see \cite{BhuniaPMSC}.
We collect all these inequalities together:
\begin{equation}\label{p3}
    w(A)w(B) \leq w(A\otimes B) \, \leq \, \min \left\{ w(A)\|B\|,
    w(B)\|A\|\right\}.
\end{equation}
In particular, if $w(A) = \| A \|$ or $w(B) = \| B \|$ then $w(A \otimes
B) = w(A) w(B)$.

We next record a ``$2 \times 2$ calculation'': $w \left(
\begin{bmatrix} 0 & \lambda \\ \mu & 0 \end{bmatrix} \right) =
\displaystyle \frac{|\lambda| + |\mu|}{2}$ for all $\lambda, \mu \in
\mathbb{C}$. This can be (indirectly) deduced from the results in
\cite{Hiz2011} and \cite{Bhunia-AdM24}; we now extend it to all
anti-diagonal matrices, show this from first principles, and use it below
without further mention:

\begin{prop}\label{P2x2}
Let $A = [ a_{ij} ]_{i,j=1}^n \in \mathcal{M}_n(\mathbb{C})$ be
anti-diagonal: $a_{ij} = {\bf 1}_{i+j=n+1} \lambda_i$ for all $i,j$. Then
\[
w(A) = \frac{1}{2} \max_{1 \leq j \leq \lceil n/2 \rceil} (|\lambda_j| +
|\lambda_{n+1-j}|).
\]
\end{prop}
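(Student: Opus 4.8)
The plan is to reduce the numerical radius of a general anti-diagonal matrix to a direct sum of $2\times 2$ anti-diagonal blocks (plus possibly a $1\times 1$ zero block when $n$ is odd), and then handle the $2\times 2$ case from first principles. First I would observe that an anti-diagonal matrix $A$ with $a_{ij} = \mathbf{1}_{i+j=n+1}\lambda_i$ has a block structure: the index $i$ is coupled only to $n+1-i$. So after the permutation similarity that groups the pairs $\{j, n+1-j\}$ for $1 \le j \le \lfloor n/2 \rfloor$ together (and isolates the middle index $\lceil n/2 \rceil$ if $n$ is odd), $A$ becomes a block-diagonal matrix whose blocks are $\begin{bmatrix} 0 & \lambda_j \\ \lambda_{n+1-j} & 0 \end{bmatrix}$, together with a $1\times 1$ block $[\lambda_{(n+1)/2}]$ when $n$ is odd — but wait, for $n$ odd the middle index satisfies $i = n+1-i$, so that diagonal entry is $\lambda_{(n+1)/2}$ sitting on the main diagonal; however, "anti-diagonal" in the statement forces $a_{ii}=0$ unless $i = n+1-i$, and actually the hypothesis as literally written does include this middle entry. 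I would note that $w$ of a direct sum is the max of the $w$'s of the summands (since the numerical range of a direct sum is the convex hull of the union), so $w(A) = \max_j w\!\left(\begin{bmatrix} 0 & \lambda_j \\ \lambda_{n+1-j} & 0\end{bmatrix}\right)$, with the convention that for $n$ odd the middle term $|\lambda_{(n+1)/2}|$ is absorbed as the $j = \lceil n/2\rceil$ term (where $n+1-j = j$, giving $\frac{|\lambda_j| + |\lambda_j|}{2} = |\lambda_j|$) — this is exactly why the stated formula runs up to $\lceil n/2\rceil$ and why $|\lambda_j| + |\lambda_{n+1-j}|$ is the right expression even in the degenerate case.

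Next I would establish the $2\times 2$ fact $w\!\left(\begin{bmatrix} 0 & \lambda \\ \mu & 0\end{bmatrix}\right) = \frac{|\lambda|+|\mu|}{2}$. The clean way: write $M = \begin{bmatrix} 0 & \lambda \\ \mu & 0 \end{bmatrix}$. Using diagonal unitary conjugation $U = \mathrm{diag}(1, e^{i\theta})$ and the weak unitary invariance of $w$, one can rotate the phases so that WLOG $\lambda, \mu \ge 0$; more precisely $w(M)$ depends only on $|\lambda|, |\mu|$. For real nonnegative $\lambda,\mu$, compute directly: for a unit vector $x = (\cos t, e^{i\phi}\sin t)^T$ one has $\langle Mx, x\rangle = \lambda e^{-i\phi}\cos t \sin t + \mu e^{i\phi}\cos t\sin t$, whose modulus is maximized by choosing $\phi$ to align the two terms and $t = \pi/4$, giving $\frac{\lambda+\mu}{2}$. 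Alternatively — and perhaps cleaner for the write-up — invoke the standard formula $w(M) = \frac{1}{2}\left(\|M\| + \|M^2\|^{1/2}... \right)$ hmm, that is not a general identity; better to just do the elementary optimization, or note $w(M) = \frac12 \max_{\zeta \in \mathbb{T}} \|\mathrm{Re}(\zeta M)\|$ and that $\mathrm{Re}(\zeta M)$ is a real-symmetric (after phase alignment) anti-diagonal $2\times 2$ matrix with norm $\frac{|\lambda|+|\mu|}{2}$ at the optimal $\zeta$. I expect the two-by-two computation to be entirely routine.

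**The main obstacle** — really the only subtle point — is bookkeeping with the case $n$ odd and making sure the indexing by $\lceil n/2\rceil$ and the term $|\lambda_j| + |\lambda_{n+1-j}|$ correctly encode the middle $1\times 1$ block without double-counting or sign issues. I would dispatch this by explicitly writing the permutation $\sigma$ that realizes the similarity to block-diagonal form, stating that $P_\sigma^{-1} A P_\sigma = \bigoplus_{j=1}^{\lfloor n/2\rfloor}\begin{bmatrix} 0 & \lambda_j \\ \lambda_{n+1-j} & 0\end{bmatrix} \;\oplus\; [\,\lambda_{(n+1)/2}\,]^{\oplus \epsilon}$ where $\epsilon = n \bmod 2$, then applying $w(X \oplus Y) = \max\{w(X), w(Y)\}$ and the $2\times 2$ formula, and finally checking that $w([\lambda_{(n+1)/2}]) = |\lambda_{(n+1)/2}| = \frac12(|\lambda_{(n+1)/2}| + |\lambda_{n+1-(n+1)/2}|)$ matches the $j = (n+1)/2 = \lceil n/2\rceil$ term of the stated maximum. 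Since numerical radius is invariant under permutation similarity (a special case of unitary invariance), this completes the argument.
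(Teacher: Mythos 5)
Your proposal is correct and essentially coincides with the paper's own (alternate) proof of Proposition~\ref{P2x2}: permutation similarity to a direct sum of $2\times 2$ anti-diagonal blocks (plus a $1\times 1$ middle block when $n$ is odd), followed by $w(X\oplus Y)=\max\{w(X),w(Y)\}$ and the ``$2\times 2$ calculation''. One small caveat: diagonal unitary conjugation alone preserves the sum of the arguments of the two anti-diagonal entries, so to reduce to $\lambda,\mu\ge 0$ you must also multiply by a unimodular scalar (which leaves $w$ unchanged) --- or simply keep your direct optimization over the phase $\phi$, which already handles arbitrary complex $\lambda,\mu$.
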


\begin{proof}
We first prove the claimed bound is attained. Suppose the maximum is
attained at some $j$ (which is allowed to be the ``central'' position if
$n$ is odd). If $\lambda_j \lambda_{n+1-j} = 0$ then define $x = (x_1,
\dots, x_n) \in \mathbb{C}^n$ to have coordinates $x_j = x_{n+1-j} =
1/\sqrt{2}$ and all other $x_i = 0$. Then $|\langle Ax, x \rangle| =
\frac{|\lambda_j| + |\lambda_{n+1-j}|}{2}$, as desired. Otherwise
$\lambda_j \lambda_{n+1-j} \neq 0$; now choose $\theta,\mu \in [0,2\pi]$
such that $\frac{\lambda_j}{|\lambda_j|} e^{i\theta} =
\frac{\lambda_{n+1-j}}{|\lambda_{n+1-j}|} = e^{i \mu}$.
Let $x \in \mathbb{C}^n$ be such that $x_j = 1/\sqrt{2}$, $x_{n+1-j} =
e^{i\theta/2} / \sqrt{2}$, and all other $x_i = 0$. (There is no
ambiguity if $j = n+1-j$ is ``central'', since $\theta = 0$.) Then
\[
w(A) \geq |\langle Ax, x \rangle| = |e^{i \theta/2} e^{i \mu}| \cdot
\left| e^{i\theta/2} \lambda_j + e^{-i \theta/2} \lambda_{n+1-j} \right|
/ 2 = \left( |\lambda_j| + |\lambda_{n+1-j}| \right) / 2.
\]

To show the reverse inequality, compute for any $x = (x_1, \dots, x_n)^T
\in \mathbb{C}^n$:
\[
2 | \langle Ax, x \rangle| = 2 \left| \sum_{i=1}^n x_i \lambda_i
\overline{x}_{n+1-i} \right| \leq \sum_{i=1}^n 2 | x_i \lambda_i
\overline{x}_{n+1-i} | = \sum_{i=1}^n (|\lambda_i| + |\lambda_{n+1-i}|)
\cdot |x_i x_{n+1-i}|,
\]
using the triangle inequality. Now by the AM-GM inequality and choice of
$j$, this quantity is at most
$\displaystyle (|\lambda_j| + |\lambda_{n+1-j}|) \cdot \sum_{i=1}^n
(|x_i|^2 + |x_{n+1-i}|^2)/2 = 
(|\lambda_j| + |\lambda_{n+1-j}|) \cdot \| x \|^2$, as desired.
\end{proof}

\blue{We also provide a less computational proof of Proposition
\ref{P2x2}, based on the weakly unitarily invariance property of the
numerical radius.

\begin{proof}[Alternate proof of Proposition \ref{P2x2}] 
    Note that the matrix $A$ is permutationally similar to 
    $$\begin{bmatrix}
        0 & \lambda_1\\
        \lambda_n & 0
    \end{bmatrix} \oplus \ldots \oplus \begin{bmatrix}
        0 & \lambda_{\lfloor \frac{n-1}{2} \rfloor}\\
        \lambda_{n+1- \lfloor \frac{n-1}{2} \rfloor} & 0
    \end{bmatrix} \oplus A',$$
    where $A'=\begin{bmatrix}
        0 & \lambda_{n/2}\\
        \lambda_{(n/2)+1} & 0
    \end{bmatrix}$ if $n$ is even, and $A'=\begin{pmatrix}
         \lambda_{(n+1)/2}
    \end{pmatrix}$ if $n$ is odd. Now the result follows from $w(X\oplus
    Y)=\max \{w(X), w(Y) \}$ and the ``$2\times 2$ calculation'' above.
\end{proof}
}

We next study $w(A \otimes B)$ for operator matrices, where $A$ is
anti-diagonal and $B \in \mathcal{B}(\mathcal{H})$. One such result
\cite{Hiz2011}, used below, is that if $A = \begin{bmatrix} 0 & 1 \\
\lambda & 0 \end{bmatrix}$ with $|\lambda| = 1$ then $w(A \otimes B) =
w(A) w(B)$. We extend this to a larger class of anti-diagonal $A$ (whose
nonzero entries can differ in modulus):

\begin{cor}\label{lem3}
Suppose $A$ is a complex anti-diagonal matrix, and there exists an index
$1 \leq j \leq n$ such that $|\lambda_j| = |\lambda_{n+1-j}| \geq
|\lambda_i|$ for all other $i$. Then for any $B \in
\mathbb{B}(\mathcal{H})$, $w(A \otimes B) = w(A) w(B)$.
\end{cor}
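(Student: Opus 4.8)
The plan is to combine the general lower bound $w(A \otimes B) \geq w(A) w(B)$ from~\eqref{p3} with a matching upper bound obtained from Theorem~\ref{th4}. For the upper bound, we apply Theorem~\ref{th4} to the anti-diagonal matrix $A$: this gives $w(A \otimes B) \leq w(C)$, where $C = [c_{ij}]$ has zero diagonal (since $a_{ii} = 0$ for all $i$, as $A$ is anti-diagonal with $n \geq 2$; the $n=1$ case is trivial) and off-diagonal entries $c_{ij} = w\left(\begin{bmatrix} 0 & a_{ij} \\ a_{ji} & 0 \end{bmatrix} \otimes B\right)$. Because $A$ is anti-diagonal, the only nonzero $c_{ij}$ occur when both $a_{ij}$ and $a_{ji}$ can be nonzero, i.e.\ when $i + j = n+1$; for such a pair $\{i, n+1-i\}$ we have $c_{i,n+1-i} = w\left(\begin{bmatrix} 0 & \lambda_i \\ \lambda_{n+1-i} & 0 \end{bmatrix} \otimes B\right)$. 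So $C$ is itself an anti-diagonal operator matrix whose nonzero entries are these numerical radii.

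Next I would compute $w(C)$. Since $C$ is anti-diagonal, it is permutationally similar to a direct sum of $2 \times 2$ (operator-scalar) blocks $\begin{bmatrix} 0 & c_{i,n+1-i} \\ c_{n+1-i,i} & 0 \end{bmatrix}$ (plus possibly a central $1 \times 1$ block if $n$ is odd), exactly as in the alternate proof of Proposition~\ref{P2x2}. Note $c_{i,n+1-i} = c_{n+1-i,i}$ by the symmetry of the numerical radius under transposing the $2\times 2$ block. Hence $w(C) = \max_i c_{i,n+1-i}$, a maximum of numerical radii of scalar-weighted $2\times 2$ operator matrices. Each such term $w\left(\begin{bmatrix} 0 & \lambda_i \\ \lambda_{n+1-i} & 0 \end{bmatrix} \otimes B\right)$ can be bounded above: writing $\alpha_i = \max(|\lambda_i|, |\lambda_{n+1-i}|)$, one has $\begin{bmatrix} 0 & \lambda_i \\ \lambda_{n+1-i} & 0 \end{bmatrix} = D_i \begin{bmatrix} 0 & \mu_i \\ \nu_i & 0 \end{bmatrix}$ for a diagonal unitary $D_i$ and $|\mu_i|, |\nu_i| \leq \alpha_i$; by monotonicity and scaling this is at most $\alpha_i \, w\!\left(\begin{bmatrix} 0 & 1 \\ 1 & 0\end{bmatrix} \otimes B\right)$.

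The key input now is the cited result from \cite{Hiz2011}: $w\left(\begin{bmatrix} 0 & 1 \\ 1 & 0 \end{bmatrix} \otimes B\right) = w(B)$ (this is the $\lambda = 1$ instance of the stated fact). Combining, $c_{i,n+1-i} \leq \alpha_i \, w(B)$, and by the hypothesis there is an index $j$ with $\alpha_j = |\lambda_j| = |\lambda_{n+1-j}| \geq |\lambda_i| \geq \alpha_i$ for all $i$ — wait, more carefully: the hypothesis gives $|\lambda_j| = |\lambda_{n+1-j}| \geq |\lambda_i|$ for all $i$, so $\max_i \alpha_i = |\lambda_j|$. Thus $w(C) = \max_i c_{i,n+1-i} \leq |\lambda_j| w(B) = w(A) w(B)$, where the last equality is Proposition~\ref{P2x2} applied to $A$ (its maximum is attained at this $j$ since $|\lambda_j| + |\lambda_{n+1-j}| = 2|\lambda_j|$ dominates every $|\lambda_i| + |\lambda_{n+1-i}| \leq 2|\lambda_j|$). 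Chaining $w(A)w(B) \leq w(A \otimes B) \leq w(C) \leq w(A) w(B)$ forces equality throughout.

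The main obstacle is making the diagonal-unitary reduction precise so that monotonicity of $w(\cdot)$ (entrywise monotonicity~\eqref{p2}, or more simply weak unitary invariance applied to $D_i \otimes I$) legitimately yields $c_{i,n+1-i} \leq \alpha_i w(B)$ — one must check that conjugating $\begin{bmatrix} 0 & \lambda_i \\ \lambda_{n+1-i} & 0\end{bmatrix} \otimes B$ by the unitary $D_i \otimes I_{\mathcal H}$ reduces to the case where the two off-diagonal scalars are non-negative, and that rescaling a non-negative anti-diagonal $2\times 2$ block so its entries lie in $[0, \alpha_i]$ only decreases the numerical radius; both are elementary but deserve a line. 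Everything else is bookkeeping via Proposition~\ref{P2x2}, Theorem~\ref{th4}, the block-diagonal decomposition, and the cited identity from \cite{Hiz2011}.
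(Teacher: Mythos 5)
Your overall route (lower bound from~\eqref{p3}, upper bound via Theorem~\ref{th4} plus a blockwise estimate) differs from the paper's, which is a one-line sandwich: by Proposition~\ref{P2x2} the hypothesis gives $w(A) = |\lambda_j| = \max_i|\lambda_i| = \|A\|$, so \eqref{p3} yields $w(A)w(B) \leq w(A\otimes B) \leq \|A\|\,w(B) = w(A)w(B)$, and Theorem~\ref{th4} is never needed. Your detour could in principle work, but the justification of its key step has a genuine gap. You need $w\left(\begin{bmatrix} 0 & \lambda_i \\ \lambda_{n+1-i} & 0 \end{bmatrix}\otimes B\right) \leq \max(|\lambda_i|,|\lambda_{n+1-i}|)\,w(B)$, and neither mechanism you offer delivers it. Writing the block as $D_i\begin{bmatrix} 0 & \mu_i \\ \nu_i & 0\end{bmatrix}$ with $D_i$ a diagonal unitary is a one-sided factorization, and the numerical radius is not invariant under one-sided unitary multiplication, so no comparison of $w$'s follows. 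Switching (as in your last paragraph) to conjugation by $D_i\otimes I_{\mathcal H}$ does preserve $w$, but conjugating an anti-diagonal $2\times 2$ block by $\mathrm{diag}(d_1,d_2)$ multiplies the two entries by $\bar d_1 d_2$ and $\bar d_2 d_1$ respectively; it cannot make both entries non-negative unless $\lambda_i\lambda_{n+1-i}$ is already a non-negative real, so the reduction to the non-negative case fails in general. Finally, the rescaling step cannot be justified by \eqref{p2}, which is entrywise monotonicity for scalar matrices with non-negative entries, not for operator matrices $\begin{bmatrix} 0 & \mu B \\ \nu B & 0\end{bmatrix}$ (a correct version does hold via a phase-alignment argument on $\mu\langle By,x\rangle + \nu\langle Bx,y\rangle$, but that argument is not in your write-up).

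The irony is that the blockwise inequality you are after is itself an instance of \eqref{p3}: $w(M\otimes B)\leq \|M\|\,w(B)$ with $\|M\| = \max(|\lambda_i|,|\lambda_{n+1-i}|)$ for an anti-diagonal $2\times 2$ block $M$ — no appeal to \cite{Hiz2011}, diagonal unitaries, or monotonicity is needed. Once you use \eqref{p3} at the block level you may as well apply it to $A$ directly, which collapses your argument to the paper's proof. One further small slip: for odd $n\geq 3$ the central entry $a_{(n+1)/2,(n+1)/2}=\lambda_{(n+1)/2}$ lies on the diagonal, so $C$ need not have zero diagonal; its central entry is $|\lambda_{(n+1)/2}|\,w(B)$, which is harmless for your final maximum but contradicts your claim that $a_{ii}=0$ for all $i$.
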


\begin{proof}
Since $\| A \| = \max_i |\lambda_i| = |\lambda_j|$, and $w(A) =
|\lambda_j|$ by Proposition~\ref{P2x2}, the assertion follows
from~\eqref{p3} via sandwiching.
\end{proof}

\begin{remark}
Corollary~\ref{lem3} is atypical, in that for every anti-diagonal matrix
$A_0$ not in Corollary~\ref{lem3} (so that $0 < w(A_0) < \| A_0 \|$),
and every Hilbert space with dimension in $[2,\infty]$, the inequality
$w(A_0) w(B) \leq w(A_0 \otimes B)$ can be strict. Indeed, say
$\mathcal{H} = \mathbb{C}^2$ and
$B=\begin{bmatrix}
    0&1\\
    0&0
\end{bmatrix}.$
Then $A_0 \otimes B$ is again anti-diagonal, so Proposition~\ref{P2x2}
gives that
\[
w(B) = 1/2, \qquad w(A_0 \otimes B) = \| A_0 \|/2 = \| A_0 \| w(B) >
w(A_0) w(B).
\]
One can carry out a similar computation in any Hilbert space
$\mathcal{H}$ by choosing orthonormal vectors $u,v \in \mathcal{H}$ and
letting $B := u \langle -, v \rangle \in \mathcal{B}(\mathcal{H})$ be a
rank-one operator.
\end{remark}

Having discussed anti-diagonal matrices, we now continue towards refining
Holbrook's bound. We need the following operator norm inequality of Hou
and Du \cite{Hou_1995} for operator matrices.

\begin{lemma}[\cite{Hou_1995}]\label{lem2}
Let $P_{ij} \in \mathcal{B}(\mathcal{H})\ \forall 1 \leq i,j \leq n$ and
$\mathbb{P} = [P_{ij}]$. Then $\| \mathbb{P} \| \leq \big\|
[\|P_{ij}\|]_{n\times n} \big\|.$
\end{lemma}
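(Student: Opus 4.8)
The plan is to reduce the operator–matrix norm directly to the norm of the scalar matrix $M := \big[\|P_{ij}\|\big]_{n\times n} \in \mathcal{M}_n([0,\infty))$, by passing from a vector $x \in \mathcal{H}^n$ to its ``vector of norms''. Concretely, I would fix an arbitrary $x = (x_1,\dots,x_n)^T \in \mathcal{H}^n$ with $\|x\|^2 = \sum_{j=1}^n \|x_j\|^2 = 1$, and look at the $i$-th block coordinate of $\mathbb{P}x$, namely $(\mathbb{P}x)_i = \sum_{j=1}^n P_{ij} x_j$. The triangle inequality in $\mathcal{H}$ together with the definition of the operator norm gives, for each $i$,
\[
\|(\mathbb{P}x)_i\| \;\le\; \sum_{j=1}^n \|P_{ij}x_j\| \;\le\; \sum_{j=1}^n \|P_{ij}\|\,\|x_j\|.
\]

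Next I would introduce $\xi := (\|x_1\|,\dots,\|x_n\|)^T \in \mathbb{R}^n$, which satisfies $\|\xi\|_2 = \|x\| = 1$. The displayed inequality says exactly that $\|(\mathbb{P}x)_i\| \le (M\xi)_i$ for every $i$. Here the crucial structural point is that both $M$ and $\xi$ have non-negative real entries, so $M\xi$ also has non-negative entries, and hence the entrywise domination is preserved under squaring: $\|(\mathbb{P}x)_i\|^2 \le (M\xi)_i^2$. Summing over $i$,
\[
\|\mathbb{P}x\|^2 \;=\; \sum_{i=1}^n \|(\mathbb{P}x)_i\|^2 \;\le\; \sum_{i=1}^n (M\xi)_i^2 \;=\; \|M\xi\|_2^2 \;\le\; \|M\|^2 \,\|\xi\|_2^2 \;=\; \|M\|^2 .
\]
Taking the supremum over all unit vectors $x \in \mathcal{H}^n$ yields $\|\mathbb{P}\| \le \|M\| = \big\|\,[\|P_{ij}\|]_{n\times n}\,\big\|$, as claimed.

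I do not expect a genuine obstacle here: the only point that needs care is the non-negativity of the entries of $M$ and $\xi$, which is what makes the passage ``$\|(\mathbb{P}x)_i\| \le (M\xi)_i \Rightarrow \|(\mathbb{P}x)_i\|^2 \le (M\xi)_i^2$'' legitimate and lets the scalar estimate $\|M\xi\|_2 \le \|M\|\,\|\xi\|_2$ take over. Everything else is the triangle inequality and the definition of operator norm. Since this is the argument of Hou–Du \cite{Hou_1995}, I would simply record it for completeness (or cite the reference directly).
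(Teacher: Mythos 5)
Your argument is correct: the passage from $x$ to the vector of norms $\xi$, the entrywise bound $\|(\mathbb{P}x)_i\| \le (M\xi)_i$, and the observation that non-negativity of $M$ and $\xi$ lets you square and sum, together give exactly $\|\mathbb{P}x\| \le \|M\|\,\|x\|$. Note that the paper does not prove this lemma at all -- it is quoted from Hou--Du \cite{Hou_1995} as a known result -- so there is no internal proof to compare against; your write-up is the standard direct proof and would serve as a self-contained justification if one did not wish to rely solely on the citation.
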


Finally, it is well known (\cite[pp.~44]{Horn-2} and \cite{PMSC}) that if
$A\in \mathcal{M}_n(\mathbb{C})$ with all $a_{ij} \in [0,\infty)$, then
\begin{equation}\label{p1}
   w(A)=\frac12 w\left({A+A^*}\right)=\frac12 r\left({A+A^*}\right),
\end{equation} 
where $r(\cdot)$ denotes the spectral radius. This and the spectral
radius monotonicity of matrices with non-negative entries imply (see
\cite[pp.~491]{Horn2}) that for $A=[a_{ij}], A'=[a'_{ij}] \in
\mathcal{M}_n(\mathbb{C})$, 
\begin{eqnarray}\label{p2}
w(A) \leq w(A')\quad \text{whenever  $ 0 \leq a_{ij} \leq a'_{ij}$ for
all $i,j$}.
\end{eqnarray}

With these results in hand, we can now refine Holbrook's bound for
$w(A\otimes B)$:

\begin{proof}[Proof of Theorem~\ref{th4}]
From~\eqref{p3} and Proposition~\ref{P2x2}, we get
\begin{equation}\label{Etemp}
w\left(\begin{bmatrix}
         0& a_{ij}\\
         a_{ji}&0
     \end{bmatrix} \otimes B\right) \leq  w\left(\begin{bmatrix}
         0& a_{ij}\\
         a_{ji}&0
     \end{bmatrix}  \right) \|B\|= \frac12\left( |a_{ij}|+ |a_{ji}| \right) \|B\|.
\end{equation}

We now proceed. For any $\lambda \in \mathbb{C}$ with $|\lambda|=1,$ we have
    \begin{eqnarray*}
        \left \|\operatorname{Re}(\lambda A\otimes B  )\right \|
        &&= \left\| \left[ \frac{\lambda a_{ij}B+ \bar{\lambda} \bar{a}_{ji}B^*}{2} \right]_{n\times n} \right\|\\
        &&\leq \left\| \left[  \frac{ \| \lambda a_{ij}B+ \bar{\lambda} \bar{a}_{ji}B^* \| }{2} \right]_{n\times n} \right\| \quad (\text{by Lemma~\ref{lem2}})\\
        &&= w \left( \left[  \frac{ \| \lambda a_{ij}B+ \bar{\lambda} \bar{a}_{ji}B^* \| }{2} \right]_{n\times n} \right) \quad (\text{$w(D)=\|D\|$ if $D^* = D$})\\
                &&\leq  w \left( \left[ \max_{|\lambda|=1} \frac{ \| \lambda a_{ij}B+ \bar{\lambda} \bar{a}_{ji}B^* \| }{2} \right]_{n\times n} \right) \quad (\text{using~\eqref{p2}})\\
               & & =  w \left( \left[ w\left(\begin{bmatrix}
         0& a_{ij}B\\
         a_{ji}B&0
     \end{bmatrix} \right) \right]_{n\times n} \right)
    = w(C)\\
  &&  \blue{  \quad \left(\text{since $w(X)= \frac{1}{2} \sup_{|\lambda|=1} \left\| \lambda X+\bar{\lambda}X^* \right\|$ for every $X\in \mathcal{B}(\mathcal{H})$} \right) },\\
    \end{eqnarray*}
   where ${C}=[c_{ij}]$ with
      $c_{ij}=  \begin{cases}
	    |a_{ii}|w(B) & \text{if } i=j,\\
     w\left(\begin{bmatrix}
         0& a_{ij}\\
         a_{ji}&0
     \end{bmatrix} \otimes B\right)
     &\text{if } i\neq j.
	\end{cases}$
Maximize over $|\lambda|=1$ and use the simple fact that $w(A\otimes
B)=\underset{{|\lambda|=1}}{\max} \|\operatorname{Re}(\lambda A\otimes B  )\|$. Thus,
$w(A\otimes B)\leq w(C)$.
Now use~\eqref{p1}, \eqref{p2}, and~\eqref{Etemp} to get:
$w(A\otimes B) \leq  w(C) \leq w(C^\circ)$, where $C^\circ$ is as in the
theorem.    
\end{proof}

\begin{remark}
The inequality $w(A\otimes B) \leq w(C^\circ)$ also follows from the
bound $w\left([A_{ij}]_{i,j=1}^n\right)\leq
w\left([a_{ij}']_{i,j=1}^n\right)$, where   $A_{ij}\in
\mathcal{B}(\mathcal{H})$,  $a_{ii}'=w(A_{ii})$ if $i=j$ and
$a_{ij}'=\|A_{ij}\|$ if $i\neq j$ (see \cite[Theorem 1]{Abu} and
\cite{Bhunia-IJPA25}).
Along yet another approach: one might think of using Schur's
triangularization theorem to assume $A$ triangular, since the numerical
radius is weakly unitarily invariant. However, doing so would change the
entries of the matrix $A$ itself, and hence our bounds as well (which
explicitly use the entries of $A$).
\end{remark}

\begin{remark}
Suppose $A=[a_{ij}]\in \mathcal{M}_n(\mathbb{C})$ with $|a_{ij}|=|a_{ji}|
$ for all $i,j.$ Then using Theorem~\ref{th4},
$w(C) = w\left( [ |a_{ij}| ]_{n\times n} \right) w(B),$ where $C$ is
as in Theorem~\ref{th4}. This implies:
$w(A\otimes B) \leq    w\left( [ |a_{ij}| ]_{n\times n}  \right) w(B).$
    Now if $A_1 = \begin{bmatrix}
    0&1+i\\
    \sqrt{2}&0
\end{bmatrix}$ (where $i=\sqrt{-1}$) and $B=\begin{bmatrix}
    0&2\\
    0&0
\end{bmatrix}$,
then $w(C) = w\left( [ |a_{ij}| ]_{n\times n}  \right) w(B) =\sqrt{2}<
2\sqrt{2}=  w(A_1)\|B\|$.
Thus, while our result reduces to Holbrook's bound if $A$ is diagonal,
this example shows that our result can improve Holbrook's bound when one
considers the slightly larger class of normal matrices.
\end{remark}

Now Theorem~\ref{th4} yields the following corollary, which also refines
Holbrook's bound. 

\begin{cor}\label{th2}
Let $A=[a_{ij}]\in \mathcal{M}_n(\mathbb{C})$ and $B\in
\mathcal{B}(\mathcal{H}).$ Then the following inequalities hold:
\begin{enumerate}
\item $w(A\otimes B)  \leq w(A') w(B)$, where $A' =[a'_{ij}]$ with
$a'_{ij} = \max \left\{ |a_{ij}|, |a_{ji}| \right\}.$

\item $w(A\otimes B)  \leq w(\widehat{C})$, where
${\widehat{C}}=[\hat{c}_{ij}]$ with
\[
\hat{c}_{ij}=  \begin{cases}
|a_{ii}|w(B) & \text{ if } i=j,\\
\frac12 \left\| (|a_{ij}| |B|+|a_{ji}| |B^*|) \right\|^{1/2} \left\|(
|a_{ji}| |B|+|a_{ij}| |B^*| )\right\|^{1/2} & \text{ if } i\neq j.
\end{cases}
\]
\end{enumerate}
\end{cor}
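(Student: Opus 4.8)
The plan is to obtain both parts by feeding sharper estimates into Theorem~\ref{th4}, replacing the crude bound $c_{ij} \le \tfrac12(|a_{ij}|+|a_{ji}|)\|B\|$ used to pass to $C^\circ$ by better evaluations of the $2\times 2$ tensor numerical radius $c_{ij} = w\!\left(\begin{bmatrix} 0 & a_{ij} \\ a_{ji} & 0\end{bmatrix}\otimes B\right)$, and then invoking entrywise monotonicity~\eqref{p2} of $w(\cdot)$ on matrices with non-negative entries.

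For part (1), first I would note that $\begin{bmatrix} 0 & a_{ij} \\ a_{ji} & 0\end{bmatrix}\otimes B$ is, up to a diagonal unitary conjugation (scaling the two block coordinates by unimodular phases), unitarily equivalent to $\begin{bmatrix} 0 & |a_{ij}| \\ |a_{ji}| & 0\end{bmatrix}\otimes B$; since $w(\cdot)$ is weakly unitarily invariant, we may assume $a_{ij},a_{ji}\ge 0$ in the off-diagonal blocks. Then by~\eqref{p3} applied to the $2\times 2$ scalar matrix $\begin{bmatrix} 0 & |a_{ij}| \\ |a_{ji}| & 0\end{bmatrix}$, whose operator norm is $\max\{|a_{ij}|,|a_{ji}|\}$ and which is therefore dominated entrywise (after symmetrizing) by $\max\{|a_{ij}|,|a_{ji}|\}\,{\bf 1}_{2\times 2}$, we get $c_{ij} \le w\!\left(\begin{bmatrix} 0 & |a_{ij}| \\ |a_{ji}| & 0\end{bmatrix}\right)\|B\|$. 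Actually the cleaner route: $c_{ij} \le \big\|\begin{bmatrix} 0 & a_{ij} \\ a_{ji} & 0\end{bmatrix}\big\|\, w(B) = \max\{|a_{ij}|,|a_{ji}|\}\, w(B) = a'_{ij}\,w(B)$, using $w(X\otimes B)\le \|X\|\,w(B)$ from~\eqref{p3}. Meanwhile $c_{ii} = |a_{ii}|\,w(B) = a'_{ii}\,w(B)$. Hence the matrix $C$ of Theorem~\ref{th4} satisfies $C \le A'\cdot w(B)$ entrywise (all entries non-negative), so $w(A\otimes B)\le w(C) \le w(A')\,w(B)$ by~\eqref{p2}.

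For part (2), I would instead bound each off-diagonal block more delicately. Write $\lambda a_{ij}B + \bar\lambda\bar a_{ji}B^*$ and estimate $\|\lambda a_{ij}B + \bar\lambda \bar a_{ji}B^*\| \le \big\| |a_{ij}|\,|B| + |a_{ji}|\,|B^*| \big\|^{1/2}\big\| |a_{ji}|\,|B| + |a_{ij}|\,|B^*| \big\|^{1/2}$, uniformly in $|\lambda|=1$; this is the analogue of the Cauchy–Schwarz/polar-decomposition bound $\|X\| \le \||X|\|^{1/2}\||X^*|\|^{1/2}$ applied with the triangle inequality to the two summands and their adjoints (note $(\lambda a_{ij}B)^* = \bar\lambda \bar a_{ij} B^*$ has $|(\lambda a_{ij}B)^*| \le |a_{ij}|\,|B^*|$, etc.), so that $c_{ij} = \tfrac12 \max_{|\lambda|=1}\|\lambda a_{ij}B + \bar\lambda\bar a_{ji}B^*\| \le \hat c_{ij}$. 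Then $C \le \widehat C$ entrywise and non-negative, and~\eqref{p2} gives $w(A\otimes B)\le w(C)\le w(\widehat C)$.

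The main obstacle I anticipate is getting the off-diagonal estimate in part (2) exactly in the stated asymmetric form $\||a_{ij}||B|+|a_{ji}||B^*|\|^{1/2}\,\||a_{ji}||B|+|a_{ij}||B^*|\|^{1/2}$ with the correct roles of $|B|$ and $|B^*|$: one must be careful that for $X = \lambda a_{ij}B + \bar\lambda\bar a_{ji}B^*$ the moduli $|X|$ and $|X^*|$ obey $|X|\le$ (one of the two bracketed operators) and $|X^*|\le$ (the other), in the positive-operator order, uniformly over the phase $\lambda$ — this requires the operator triangle inequality $|Y+Z| \le$ (something) only up to unitary factors, so the honest statement is the norm inequality $\|X\|^2 = \|X^*X\| \le \|\,(|a_{ij}||B|+|a_{ji}||B^*|)(|a_{ji}||B|+|a_{ij}||B^*|)\,\| \le \||a_{ij}||B|+|a_{ji}||B^*|\|\cdot\||a_{ji}||B|+|a_{ij}||B^*|\|$, obtained by bounding the factors of $|X| = (X^*X)^{1/2}$ and $|X^*| = (XX^*)^{1/2}$ in norm (not in order) via the triangle inequality; I would phrase it this way to sidestep the operator-order subtlety. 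Everything else is a routine application of Theorem~\ref{th4} and the monotonicity~\eqref{p2}.
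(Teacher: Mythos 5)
Your part~(1) is correct and is essentially the paper's own proof: bound each off-diagonal entry of the matrix $C$ from Theorem~\ref{th4} by $c_{ij}\le \max\{|a_{ij}|,|a_{ji}|\}\,w(B)$ using~\eqref{p3} (the norm of the $2\times2$ anti-diagonal matrix with entries $a_{ij},a_{ji}$ is $\max\{|a_{ij}|,|a_{ji}|\}$), note $c_{ii}=|a_{ii}|w(B)=a'_{ii}w(B)$, and finish with Theorem~\ref{th4} and the entrywise monotonicity~\eqref{p2}; the phase-rescaling detour you start with is unnecessary but harmless.

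In part~(2) your overall scheme (show $c_{ij}\le\hat c_{ij}$, then apply Theorem~\ref{th4} and~\eqref{p2}) again matches the paper, but the paper does not prove the key off-diagonal estimate at all -- it quotes it from \cite[Remark 2.7(ii)]{Bhunia-AdM24} -- whereas you attempt a derivation that has a genuine gap. Writing $X=\lambda a_{ij}B+\bar\lambda\bar a_{ji}B^{*}$, your middle step $\|X^{*}X\|\le\big\|(|a_{ij}||B|+|a_{ji}||B^{*}|)(|a_{ji}||B|+|a_{ij}||B^{*}|)\big\|$ is asserted, not proved: ``bounding the factors $|X|,|X^{*}|$ in norm via the triangle inequality'' only returns $\||X|\|=\|X\|\le(|a_{ij}|+|a_{ji}|)\|B\|$, i.e.\ the cruder estimate already behind $C^{\circ}$, while the operator-order inequality $|Y+Z|\le|Y|+|Z|$ that would genuinely be needed is false in general, as you yourself note. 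The standard (and presumably the cited) argument is instead via the mixed Schwarz inequality $|\langle Bx,y\rangle|\le\langle|B|x,x\rangle^{1/2}\langle|B^{*}|y,y\rangle^{1/2}$: for unit vectors $x,y$,
\begin{align*}
|\langle Xx,y\rangle| &\le |a_{ij}|\,\langle |B|x,x\rangle^{1/2}\langle |B^{*}|y,y\rangle^{1/2}+|a_{ji}|\,\langle |B^{*}|x,x\rangle^{1/2}\langle |B|y,y\rangle^{1/2}\\
&\le \big\langle (|a_{ij}||B|+|a_{ji}||B^{*}|)x,x\big\rangle^{1/2}\,\big\langle (|a_{ji}||B|+|a_{ij}||B^{*}|)y,y\big\rangle^{1/2},
\end{align*}
by Cauchy--Schwarz on the two-term sum; this holds uniformly in $|\lambda|=1$, gives $\|X\|\le\hat c_{ij}\cdot 2/1\cdot\tfrac12\cdot 2$ -- more precisely $\|X\|\le\||a_{ij}||B|+|a_{ji}||B^{*}|\|^{1/2}\,\||a_{ji}||B|+|a_{ij}||B^{*}|\|^{1/2}$ -- and hence $c_{ij}=\tfrac12\sup_{|\lambda|=1}\|X\|\le\hat c_{ij}$. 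With this step repaired (or simply cited, as the paper does), the rest of your argument goes through unchanged.
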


\begin{proof}
\begin{enumerate}
\item From~\eqref{p3} we get
$$w\left(\begin{bmatrix}
         0& a_{ij}\\
         a_{ji}&0
     \end{bmatrix} \otimes B\right) \leq w(B) \left \|\begin{bmatrix}
         0& a_{ij}\\
         a_{ji}&0
     \end{bmatrix}  \right\|= \max \left\{ |a_{ij}|, |a_{ji}| \right\} w(B).$$
Therefore, Theorem~\ref{th4} together with~\eqref{p2} gives  $w(A\otimes
B) \leq w(C) \leq w(A') w(B).$

\item From \cite[Remark 2.7 (ii)]{Bhunia-AdM24}, we have
\[
w\left(\begin{bmatrix}
         0& a_{ij}\\
         a_{ji}&0
\end{bmatrix} \otimes B\right) \leq \frac12 \left\| (|a_{ij}|
|B|+|a_{ji}| |B^*|) \right\|^{1/2} \left\|( |a_{ji}| |B|+|a_{ij}|
|B^*|)\right\|^{1/2}.
\]
Hence, from Theorem~\ref{th4} and~\eqref{p2} we get $w(A\otimes B) \leq
w(C) \leq w(\widehat{C}).$ \qedhere
\end{enumerate}
\end{proof}

\begin{remark}
Clearly, $ \frac12 \left\| (|a_{ij}| |B|+|a_{ji}| |B^*|) \right\|^{1/2} \left\|( |a_{ji}| |B|+|a_{ij}| |B^*| )\right\|^{1/2} \leq \frac{|a_{ij}|+|a_{ji}|}{2}\|B\|.$
From~\eqref{p1}, it follows that if $a_{ij}\geq 0$ for all $i,j$, then 
$w(A)\|B\|=w\left( \left[\frac{a_{ij}+a_{ji}}{2}\|B\| \right]_{n\times n} \right).$
Therefore when $a_{ij}\geq 0$ for all $i,j$,
Corollary~\ref{th2}(2) also refines Holbrook's bound~\eqref{E0-1}
(via~\eqref{p2}).
\end{remark}

We now use Theorem~\ref{th4} to obtain a complete characterization for
the equality $w(A\otimes B)= w(A) \|B\|$, when all entries of $A$ are
non-negative. For this, we first show:

\begin{lemma}\label{need}
Let $A=[a_{ij}]\in \mathcal{M}_n(\mathbb{C})$ with $a_{ii}\neq 0$ for all
$i$, and let $\lambda \in \mathbb{C}.$  Then
\[
\left \| \begin{bmatrix}
 		\lambda a_{11}& a_{12} & \ldots& a_{1n} \\
 		a_{21} & \lambda a_{22}& \ldots& a_{2n} \\
 		\vdots&\vdots& \ddots & \vdots\\
 		a_{n1} & a_{n2} & \ldots& \lambda a_{nn}
 	\end{bmatrix}
 	\right\|  =\|A\| \, \text{ if and only if $\lambda =1.$}
\]
\end{lemma}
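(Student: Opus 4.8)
The plan is to establish the nontrivial direction, since the converse is immediate: $\lambda = 1$ returns $A$ itself. Write $A_\lambda$ for the displayed matrix and split $A = D + N$ into its diagonal part $D = \operatorname{diag}(a_{11}, \dots, a_{nn})$ and off-diagonal part $N$, so that $A_\lambda = \lambda D + N$ and the decisive structural identity
\[
A - A_\lambda = (1-\lambda)\,D
\]
holds. When $\lambda \neq 1$ this is a nonzero \emph{invertible} diagonal matrix, precisely because $a_{ii} \neq 0$ for all $i$; this invertibility is what I will ultimately leverage. The strategy is to produce a single vector that simultaneously witnesses the operator norms of both $A$ and $A_\lambda$, and then apply the identity above to force $\lambda = 1$.

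First I would reduce to a non-negative maximizing vector. Working in the regime in which this lemma is applied --- all $a_{ij} \ge 0$ (as in Theorem~\ref{cor1}) and $\lambda \ge 0$, so that $A_\lambda$ also has non-negative entries --- one has the entrywise bound $|(A_\lambda x)_i| \le (A_\lambda |x|)_i$ for every $x$, whence $\|A_\lambda x\| \le \|A_\lambda |x|\|$ while $\||x|\| = \|x\|$. Thus $\|A_\lambda\|$ is attained at some $x \ge 0$ with $\|x\| = 1$. Assume first $\lambda \le 1$ (the case $\lambda \ge 1$ is symmetric, with the roles of $A$ and $A_\lambda$ interchanged). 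Then $0 \le A_\lambda \le A$ entrywise, so $0 \le A_\lambda x \le A x$ coordinatewise, giving the chain
\[
\|A_\lambda\| = \|A_\lambda x\| \le \|A x\| \le \|A\|.
\]

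Now suppose the two ends coincide, $\|A_\lambda\| = \|A\|$. Then both inequalities above are equalities; in particular $\|A_\lambda x\| = \|A x\|$ while $0 \le A_\lambda x \le A x$. By strict monotonicity of the Euclidean norm on the non-negative orthant (if $0 \le u \le v$ and $\|u\| = \|v\|$ then $u = v$), this upgrades to the pointwise identity $A_\lambda x = A x$, i.e.\ $(A - A_\lambda)x = (1-\lambda) D x = 0$. Since $D$ is invertible and $x \neq 0$, we have $Dx \neq 0$, and hence $\lambda = 1$, as desired.

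The step I expect to be the main obstacle is the passage from equality of \emph{norms} to equality of \emph{vectors}. Equality of operator norms is by itself a weak constraint --- it can be insensitive to sign or phase changes of the diagonal entries --- so it cannot force $\lambda = 1$ without further input; converting it into the pointwise identity $A_\lambda x = Ax$ is exactly what the non-negativity of the entries supplies, via the reduction to $x \ge 0$ together with the strict monotonicity of $\|\cdot\|$ on the positive orthant. Once that identity is secured, the hypothesis $a_{ii} \neq 0$ finishes the argument through the invertibility of $D$. In writing this up I would take care to treat the two regimes $\lambda \le 1$ and $\lambda \ge 1$ symmetrically, always reducing on the matrix with the smaller entries, and to check that the degenerate endpoints ($\lambda = 0$ and $\lambda$ large) cause no trouble.
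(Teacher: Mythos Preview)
Your argument is correct in the regime you isolate (all $a_{ij} \ge 0$ and $\lambda \ge 0$), and that restriction is not a convenience but a necessity: the lemma as stated for arbitrary complex $A$ and $\lambda$ is false. Take $A = I_n$ and any $\lambda \ne 1$ on the unit circle; then $\|A_\lambda\| = |\lambda| = 1 = \|A\|$. So you were right to retreat to the setting in which the lemma is actually invoked (in Theorem~\ref{cor1} the matrix in play is $\|B\|(A+A^*)$ with non-negative entries and $\lambda = w(B)/\|B\| \in (0,1]$), and there your monotonicity-plus-attainment argument is clean and complete: the reduction to $x \ge 0$, the entrywise comparison $A_\lambda x \le Ax$, the upgrade from equal norms to equal vectors on the non-negative orthant, and the final step through the invertibility of $D$ are all sound.

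The paper's route is different: it decomposes $A = D + C$ with $D = (1-\lambda)\operatorname{diag}(a_{ii})$ and $C = A_\lambda$, then invokes a singular value inequality $\sigma_{\max}(C) + \sigma_{\min}(D) \le \sigma_{\max}(C+D)$ (cited from Zhang) to force $\sigma_{\min}(D) = 0$ and hence $\lambda = 1$. That inequality, however, also fails for general complex matrices (e.g.\ $C = -I$, $D = I$ gives $2 \le 0$), consistent with the counterexample above, so the paper's argument is likewise only valid under additional hypotheses that are not made explicit. Your entrywise approach is more elementary and self-contained, makes the needed positivity hypotheses visible, and avoids the external citation; the paper's approach is terser but leans on a cited inequality whose scope matches neither the stated generality of the lemma nor the actual applications.
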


\begin{proof}
The sufficiency is trivial; to show the necessity, write $A=D+C$, where
$D=(1-\lambda) {\rm diag}(a_{11}, \dots, a_{nn})$ and $C$ is the matrix
on the left in the lemma.
From \cite[Theorem 8.13]{Zhang},
$\sigma_{\max}(C)+\sigma_{\min}(D)\leq \sigma_{\max}(C+D)\leq
\sigma_{\max}(C)+\sigma_{\max}(D),$ where
$\sigma_{\max}(\cdot)$ and $ \sigma_{\min}(\cdot)$ denote the maximum and
minimum singular values, respectively. Since
$\sigma_{\max}(C)=\sigma_{\max}(C+D)$ by the hypothesis,
$\sigma_{\min}(D)\leq 0$ and $\sigma_{\max}(D)\geq 0.$ As all $a_{ii}
\neq 0$, we obtain $\lambda=1$.
\end{proof}

\begin{proof}[Proof of Theorem~\ref{cor1}]
The sufficiency is trivial, from~\eqref{p3}. To show the necessity, let
$w(A\otimes B)= w(A) \|B\|$. Then from~\eqref{ECtilde} and the line
following it, we obtain $w(C^\circ) = w(A) \|B\|$ by sandwiching, with
$C^\circ$ as in~\eqref{ECtilde}. Using~\eqref{p1} twice, we have:
\[
w(C^\circ) = \frac{1}{2} r( C^\circ + (C^\circ)^* ) =
\frac{1}{2} \| C^\circ + (C^\circ)^* \|, \qquad w(A) \| B \| =
w( \| B \| A) = \frac{1}{2} \left\| \|B\| (A + A^*) \right\|.
\]
So these are equal; now using Lemma~\ref{need}, $\lambda = w(B) / \| B
\| = 1$.
\end{proof}

\section{Inequalities for Kronecker products in semi-Hilbertian spaces}\label{sec-semi}

The goal of this section is to record the extensions of the results
studied in Section~\ref{sec-ten}, to the setting of a semi-Hilbertian
space. To do this, first we need the following notations and
terminologies.
Let $\mathcal{H}$ be a complex Hilbert space and $P\in
\mathcal{B}(\mathcal{H}) $ be a nonzero positive operator, i.e.,
$\langle Px , x\rangle\geq 0$ for all $x\in \mathcal{H}.$ Consider the
semi-inner product $\langle \cdot, \cdot \rangle_P : \mathcal{H} \times
\mathcal{H} \to \mathbb{R}$  induced by $P$, namely, $\langle x ,
y\rangle_{P} =\langle Px , y\rangle$ for all $x,y\in \mathcal{H}.$ 
The semi-inner product $\langle \cdot, \cdot \rangle_P$ induces a
seminorm $\| \cdot \|_P$  on $\mathcal{H}$   given by
$\|x\|_P=\sqrt{\langle x,x\rangle_P}$ for all  $x\in \mathcal{H}$. This
makes $\mathcal{H}$ a semi-Hilbertian space, and $\|\cdot\|_P$ is a norm
on $\mathcal{H}$ if and only if $P$ is injective.

\begin{definition} 
An operator $B_1 \in\mathcal{B}(\mathcal{H})$ is called the $P$-adjoint
of $B \in \mathcal{B}(\mathcal{H})$ if for every $x,y\in \mathcal{H}$,
$\langle Bx , y\rangle_P=\langle x , B_1y\rangle_P$ holds, i.e., if $B_1$
satisfies the equation $PX=B^*P$.
\end{definition}
 
The set of all operators which admit $P$-adjoints is denoted by
$\mathcal{B}_{P}(\mathcal{H})$. From Douglas' theorem \cite{doug}, we get
$\mathcal{B}_{P}(\mathcal{H}) = \left\{ B\in
\mathcal{B}(\mathcal{H})\,:\;B^* \left(\mathcal{R}(P)\right)\subseteq
\mathcal{R}(P)\right\},$ where $\mathcal{R}(P)$ denotes the range of $P.$
For $B\in \mathcal{B}(\mathcal{H})$, the reduced solution of the equation
$PX=B^*P$ is a distinguished $P$-adjoint of $B$, which is denoted by
$B^{\sharp_P}$ and satisfies $B^{\sharp_P}=P^\dag B^*P$, where $P^\dag$
is the Moore--Penrose inverse of $P$. Again via Douglas' theorem, one can
show:
\begin{align*}
\mathcal{B}_{P^{1/2}}(\mathcal{H}) = &\ \left\{ B\in
\mathcal{B}(\mathcal{H})
\,:\;B^{*} \left(\mathcal{R}(P^{1/2})\right)\subseteq
\mathcal{R}(P^{1/2})\right\}\\
= &\ \{B \in \mathcal{B}(\mathcal{H}) : \exists~~ \lambda > 0
~~\mbox{such that}~~  \|Bx\|_P \leq \lambda \|x\|_P, \ \forall ~~x \in
\mathcal{H}\}.
\end{align*}
Here $\mathcal{B}_{P}(\mathcal{H})$ and
$\mathcal{B}_{P^{1/2}}(\mathcal{H})$ are two sub-algebras of
$\mathcal{B}(\mathcal{H})$ and
$\mathcal{B}_{P}(\mathcal{H})\subseteq\mathcal{B}_{P^{1/2}}(\mathcal{H})
\subseteq \mathcal{B}(\mathcal{H}).$  
The semi-inner product induces the $P$-operator seminorm on $\mathcal{B}_{P^{1/2}}(\mathcal{H})$, which is defined as
\[
\|B\|_P=\sup_{\substack{x\in \overline{\mathcal{R}(P)}\\
x\not=0}}\frac{\|Bx\|_P}{\|x\|_P}=\sup\{  \|Bx\|_{P}: { x\in \mathcal{H},
\, \|x\|_{P}= 1} \}.
\]

\begin{definition}
The $P$-numerical radius of $B \in \mathcal{B}_{P^{1/2}}(\mathcal{H})$,
denoted as $w_P(B)$, is defined as $ w_P(B)=\sup_{} \{  |\langle Bx ,
x\rangle_P| : {x\in \mathcal{H}, \|x\|_{P}= 1} \}.$
\end{definition}
	
If $P=I_{\mathcal{H}}$ (the identity operator on $\mathcal{H}$), then
$\|B\|_P=\|B\|$ and  $w_P(B)=w(B)$. It is well known that the
$P$-numerical radius $w_{P}(\cdot):
\mathcal{B}_{P^{1/2}}(\mathcal{H}) \to \mathbb{R}$ defines a seminorm and
is equivalent to the $P$-operator seminorm via the relation
$\frac{1}{2}\|B\|_P\leq w_P(B) \leq \|B\|_P.$

The semi-inner product $\langle\cdot ,\cdot\rangle_P$ induces an inner
product on the quotient space $\mathcal{H}/\mathcal{N}(P)$ defined as
$[\overline{x},\overline{y}] := \langle Px , y\rangle\ \forall x, y \in
\mathcal{H}$, where $\mathcal{N}(P)$ denotes the null space of $P$ and
$\overline{x}= x + \mathcal{N}(P)$ for $x \in \mathcal{H}$.
de Branges and Rovnyak  \cite{branrov} showed that
$\mathcal{H}/\mathcal{N}(P)$ is isometrically isomorphic to the Hilbert
space $\mathcal{R}(P^{1/2})$ with inner product
$[P^{1/2}x,P^{1/2}y] := \langle M_{\overline{\mathcal{R}(P)}}x ,
M_{\overline{\mathcal{R}(P)}}y\rangle,\;\forall\, x,y \in \mathcal{H}.$
Here $M_{\overline{\mathcal{R}(P)}}$ denotes the orthogonal projection
onto $\overline{\mathcal{R}(P)}$.

To present our results in this section, we now need the following known
lemmas, which give nice connections between $B\in
\mathcal{B}_{P^{1/2}}(\mathcal{H})$ and a certain operator
$\widetilde{B}$ on the Hilbert space $\mathcal{R}(P^{1/2})$.

\begin{lemma}[{\cite[Proposition 3.6]{acg3}}]\label{lem1-4}
Let $B\in \mathcal{B}(\mathcal{H})$ and let $Z_{P}: \mathcal{H}
\rightarrow \mathcal{R}(P^{1/2})$ be defined as $Z_{P}x = Px$ for all
$x\in \mathcal{H}$. Then $B\in \mathcal{B}_{P^{1/2}}(\mathcal{H})$ if and
only if there exists a unique operator $\widetilde{B}$ on
$\mathcal{R}(P^{1/2})$ such that $Z_P B =\widetilde{B}Z_P$.
\end{lemma}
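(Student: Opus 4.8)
The plan is to follow the outline of Douglas' range-inclusion theorem together with the observation that $Z_P = P$ (viewed as a map into $\mathcal{R}(P^{1/2})$) has dense range, so that an intertwining operator on $\mathcal{R}(P^{1/2})$, if it exists, is uniquely determined on the dense subspace $\mathcal{R}(P) = Z_P(\mathcal{H})$ and hence everywhere.

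First I would prove the ``if'' direction: suppose $\widetilde{B}$ on $\mathcal{R}(P^{1/2})$ satisfies $Z_P B = \widetilde{B} Z_P$. I want to show $B \in \mathcal{B}_{P^{1/2}}(\mathcal{H})$, i.e.\ there is $\lambda > 0$ with $\|Bx\|_P \leq \lambda \|x\|_P$ for all $x$. Since $\|y\|_P^2 = \langle Py, y \rangle = \|P^{1/2} y\|^2$, and (as recorded in the excerpt, via de Branges--Rovnyak) the norm on $\mathcal{R}(P^{1/2})$ pulls back appropriately, we have $\|Bx\|_P = \|Z_P(Bx)\|_{\mathcal{R}(P^{1/2})}$ up to the isometric identification; then $\|Z_P B x\| = \|\widetilde{B} Z_P x\| \leq \|\widetilde{B}\| \, \|Z_P x\| = \|\widetilde{B}\| \, \|x\|_P$, giving the bound with $\lambda = \|\widetilde{B}\|$. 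One must be a little careful matching the two norms: $\|Z_P x\|_{\mathcal{R}(P^{1/2})}$ should be computed via the de Branges--Rovnyak inner product $[P^{1/2}u, P^{1/2}v] = \langle M_{\overline{\mathcal{R}(P)}} u, M_{\overline{\mathcal{R}(P)}} v\rangle$, and writing $Px = P^{1/2}(P^{1/2}x)$ one gets $\|Z_P x\|^2 = \langle M_{\overline{\mathcal{R}(P)}} P^{1/2} x, M_{\overline{\mathcal{R}(P)}} P^{1/2} x \rangle = \|P^{1/2} x\|^2 = \|x\|_P^2$ since $\mathcal{R}(P^{1/2}) \subseteq \overline{\mathcal{R}(P)}$. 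So indeed the bound transfers.

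For the ``only if'' direction: suppose $B \in \mathcal{B}_{P^{1/2}}(\mathcal{H})$, so $\|Bx\|_P \leq \lambda \|x\|_P$ for some $\lambda$. I would define $\widetilde{B}$ on the dense subspace $\mathcal{R}(P) = Z_P(\mathcal{H}) \subseteq \mathcal{R}(P^{1/2})$ by $\widetilde{B}(Px) := P(Bx) = Z_P(Bx)$. Well-definedness: if $Px = Px'$ then $P(x-x') = 0$, so $\|x - x'\|_P = 0$, whence $\|B(x-x')\|_P \leq \lambda \cdot 0 = 0$, i.e.\ $\|P^{1/2}B(x-x')\| = 0$, forcing $P^{1/2} B(x-x') = 0$ and a fortiori $P B(x-x') = 0$; so the formula is consistent. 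Boundedness on this dense subspace: $\|\widetilde{B}(Px)\|_{\mathcal{R}(P^{1/2})} = \|Z_P(Bx)\| = \|Bx\|_P \leq \lambda \|x\|_P = \lambda \|Px\|_{\mathcal{R}(P^{1/2})}$, again using the norm identification above. Hence $\widetilde{B}$ extends uniquely to a bounded operator on all of $\overline{\mathcal{R}(P)} \cap \mathcal{R}(P^{1/2})$; but $\mathcal{R}(P)$ is dense in $\mathcal{R}(P^{1/2})$ in the latter's Hilbert-space norm (since $P = P^{1/2} \cdot P^{1/2}$ and $\mathcal{R}(P^{1/2})$ is dense in the closure, and more directly one checks $\overline{\mathcal{R}(P)}^{\,\|\cdot\|_{\mathcal{R}(P^{1/2})}} = \mathcal{R}(P^{1/2})$), so $\widetilde{B}$ extends to all of $\mathcal{R}(P^{1/2})$. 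By construction $Z_P B = \widetilde{B} Z_P$ on $\mathcal{H}$, and uniqueness of $\widetilde{B}$ follows since any such operator is forced to equal $Z_P B Z_P^{-1}$ on the dense range $Z_P(\mathcal{H})$.

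The main obstacle, and the step requiring the most care, is the density of $\mathcal{R}(P)$ inside $\mathcal{R}(P^{1/2})$ \emph{with respect to the de Branges--Rovnyak norm} (not the ambient $\mathcal{H}$-norm), which is what legitimizes extending $\widetilde{B}$ from $\mathcal{R}(P)$ to all of $\mathcal{R}(P^{1/2})$ and also what pins down uniqueness. This is where I would invoke the de Branges--Rovnyak isometric isomorphism $\mathcal{R}(P^{1/2}) \cong \mathcal{H}/\mathcal{N}(P)$ recalled just above the lemma: under this identification $P^{1/2}x \leftrightarrow \overline{x}$, the image of $\mathcal{R}(P) = \{P^{1/2}(P^{1/2}x)\}$ corresponds to $\{\overline{P^{1/2}x} : x \in \mathcal{H}\}$, and density amounts to $\overline{\mathcal{R}(P^{1/2})} = \mathcal{H}$ modulo $\mathcal{N}(P)$, i.e.\ $\overline{\mathcal{R}(P^{1/2})} = \overline{\mathcal{R}(P)}$, which is standard. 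Everything else is the routine Douglas-type bookkeeping sketched above. (Since this is a cited lemma from \cite{acg3}, in the write-up one could alternatively just refer the reader there; but the proof sketch above is the natural self-contained argument.)
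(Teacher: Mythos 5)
The paper gives no proof of this lemma --- it is quoted directly from \cite{acg3}, Proposition 3.6 --- so there is no in-paper argument to compare against; your self-contained proof is correct and is essentially the standard one. The key identity $\|Z_P x\|_{\mathcal{R}(P^{1/2})} = \|M_{\overline{\mathcal{R}(P)}}P^{1/2}x\| = \|P^{1/2}x\| = \|x\|_P$ makes the ``if'' direction immediate, and in the ``only if'' direction your definition of $\widetilde{B}$ on $Z_P(\mathcal{H}) = \mathcal{R}(P)$, the well-definedness check, the bound $\|\widetilde{B}(Px)\| \leq \lambda \|Px\|$, and the extension by continuity are all legitimate, since $\mathcal{R}(P)$ is dense in the de Branges--Rovnyak space $(\mathcal{R}(P^{1/2}), [\cdot,\cdot])$ (via the unitary identification with $\overline{\mathcal{R}(P)} = \overline{\mathcal{R}(P^{1/2})}$) and that space is complete. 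Two minor points of wording: $\widetilde{B}$ must be understood as a \emph{bounded} operator (as in the cited statement), otherwise the uniqueness-by-density argument would not apply; and the expression $Z_P B Z_P^{-1}$ is an abuse of notation since $Z_P$ need not be injective --- the correct formulation is simply that any bounded intertwiner is determined on the dense subspace $Z_P(\mathcal{H})$ and hence, by continuity, on all of $\mathcal{R}(P^{1/2})$.
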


From this one derives the next lemma:

\begin{lemma}\label{lem-4}
Let $B, B'\in \mathcal{B}_{P^{1/2}}(\mathcal{H})$ and let $\lambda\in
\mathbb{C}$ be any scalar. Then
\[
\widetilde{B+\lambda B'}=\widetilde{B}+\lambda
\widetilde{B'} ~~\mbox{and}~~
\widetilde{BB'}=\widetilde{B}\widetilde{B'}.
\]
\end{lemma}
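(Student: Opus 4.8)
The plan is to derive both identities purely formally from the intertwining relation in Lemma~\ref{lem1-4}, which characterizes $\widetilde{B}$ as the \emph{unique} operator on $\mathcal{R}(P^{1/2})$ satisfying $Z_P B = \widetilde{B} Z_P$. First I would record that since $\mathcal{B}_{P^{1/2}}(\mathcal{H})$ is a subalgebra of $\mathcal{B}(\mathcal{H})$ (as noted above), both $B+\lambda B'$ and $BB'$ again lie in $\mathcal{B}_{P^{1/2}}(\mathcal{H})$; hence Lemma~\ref{lem1-4} guarantees that $\widetilde{B+\lambda B'}$ and $\widetilde{BB'}$ exist and are uniquely determined by the corresponding intertwining relations. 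The task is then just to exhibit $\widetilde{B}+\lambda\widetilde{B'}$ and $\widetilde{B}\,\widetilde{B'}$ as operators doing the intertwining.

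For the linear part I would compute, using $Z_P B = \widetilde{B} Z_P$, $Z_P B' = \widetilde{B'} Z_P$, and linearity of $Z_P$,
\[
Z_P(B+\lambda B') = Z_P B + \lambda Z_P B' = \widetilde{B} Z_P + \lambda \widetilde{B'} Z_P = (\widetilde{B} + \lambda \widetilde{B'}) Z_P,
\]
so by the uniqueness clause of Lemma~\ref{lem1-4}, $\widetilde{B+\lambda B'} = \widetilde{B}+\lambda\widetilde{B'}$. For the multiplicative part, reading $Z_P B B'$ as the composition $(Z_P B)\circ B'$,
\[
Z_P(BB') = (Z_P B) B' = \widetilde{B}(Z_P B') = \widetilde{B}\,\widetilde{B'}\,Z_P,
\]
and again uniqueness forces $\widetilde{BB'} = \widetilde{B}\,\widetilde{B'}$.

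I do not expect any real obstacle: the only subtlety is why the relation $Z_P B = S Z_P$ determines $S \in \mathcal{B}(\mathcal{R}(P^{1/2}))$ uniquely, namely because $Z_P$ has dense range in $\mathcal{R}(P^{1/2})$ so two bounded operators agreeing on $\mathcal{R}(Z_P)$ coincide — but this is precisely the uniqueness already asserted in Lemma~\ref{lem1-4}, so in the write-up I would simply invoke it rather than re-prove it. Thus the entire proof is the two displayed computations above together with two applications of Lemma~\ref{lem1-4}.
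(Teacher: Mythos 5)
Your proof is correct and is precisely the derivation the paper intends: the paper gives no explicit proof, merely stating that Lemma~\ref{lem-4} is derived from Lemma~\ref{lem1-4}, and your two intertwining computations together with the uniqueness clause (and the subalgebra property of $\mathcal{B}_{P^{1/2}}(\mathcal{H})$) fill that in exactly as expected.
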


\begin{lemma}[\cite{acg3, B-KPS, kais01}]\label{lem2-4}
Let $B\in \mathcal{B}_{P^{1/2}}(\mathcal{H})$. Then
$\|B\|_P=\|\widetilde{B}\|$ and $w_P(B)=w(\widetilde{B})$.
\end{lemma}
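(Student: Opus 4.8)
The plan is to transport both identities across the intertwining relation $Z_P B = \widetilde{B} Z_P$ of Lemma~\ref{lem1-4}, after showing that $Z_P \colon \mathcal{H} \to \mathcal{R}(P^{1/2})$ preserves the $P$-semi-inner product and has dense range. First I would prove that $[Z_P x, Z_P y] = \langle x, y\rangle_P$ for all $x,y \in \mathcal{H}$, where $[\cdot,\cdot]$ is the de Branges--Rovnyak inner product on $\mathcal{R}(P^{1/2})$. Indeed $Z_P x = Px = P^{1/2}(P^{1/2}x)$, and since $\mathcal{N}(P^{1/2}) = \mathcal{N}(P)$ we have $\mathcal{R}(P^{1/2}) \subseteq \overline{\mathcal{R}(P^{1/2})} = \overline{\mathcal{R}(P)}$, so that $M_{\overline{\mathcal{R}(P)}}(P^{1/2}x) = P^{1/2}x$; applying the defining formula $[P^{1/2}a, P^{1/2}b] = \langle M_{\overline{\mathcal{R}(P)}}a, M_{\overline{\mathcal{R}(P)}}b\rangle$ with $a = P^{1/2}x$ and $b = P^{1/2}y$ then gives $[Z_P x, Z_P y] = \langle P^{1/2}x, P^{1/2}y\rangle = \langle Px, y\rangle = \langle x, y\rangle_P$. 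In particular $\|Z_P x\|_{\mathcal{R}(P^{1/2})} = \|x\|_P$, so $Z_P x = 0 \iff \|x\|_P = 0$; and $\mathcal{R}(Z_P) = \mathcal{R}(P)$ is dense in $\mathcal{R}(P^{1/2})$ (in its de Branges--Rovnyak norm).

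For the operator seminorm I would then combine this with $Z_P B = \widetilde{B} Z_P$: any $x$ with $\|x\|_P = 1$ satisfies $\|Bx\|_P = \|Z_P Bx\|_{\mathcal{R}(P^{1/2})} = \|\widetilde{B} Z_P x\|_{\mathcal{R}(P^{1/2})} \le \|\widetilde{B}\|\,\|Z_P x\|_{\mathcal{R}(P^{1/2})} = \|\widetilde{B}\|$, whence $\|B\|_P \le \|\widetilde{B}\|$. Conversely, since $\widetilde{B}$ is bounded and $\mathcal{R}(P)$ is dense in $\mathcal{R}(P^{1/2})$, we have $\|\widetilde{B}\| = \sup\{ \|\widetilde{B}z\|_{\mathcal{R}(P^{1/2})}/\|z\|_{\mathcal{R}(P^{1/2})} : 0 \ne z \in \mathcal{R}(P)\}$; writing $z = Z_P x$ (so $0 \ne \|z\|_{\mathcal{R}(P^{1/2})} = \|x\|_P$) and using the intertwining again, $\|\widetilde{B}z\|_{\mathcal{R}(P^{1/2})}/\|z\|_{\mathcal{R}(P^{1/2})} = \|Bx\|_P/\|x\|_P \le \|B\|_P$. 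Hence $\|\widetilde{B}\| \le \|B\|_P$, giving $\|B\|_P = \|\widetilde{B}\|$.

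The numerical radius I would handle by the same two-sided scheme applied to the quadratic form: by the first paragraph, $[\widetilde{B} Z_P x, Z_P x] = [Z_P Bx, Z_P x] = \langle Bx, x\rangle_P$ for every $x$. For $\|x\|_P = 1$ this gives $|\langle Bx, x\rangle_P| = |[\widetilde{B}Z_P x, Z_P x]| \le w(\widetilde{B})\,\|Z_P x\|_{\mathcal{R}(P^{1/2})}^2 = w(\widetilde{B})$, so $w_P(B) \le w(\widetilde{B})$. For the reverse, the functional $z \mapsto |[\widetilde{B}z, z]|/\|z\|_{\mathcal{R}(P^{1/2})}^2$ is continuous on $\mathcal{R}(P^{1/2}) \setminus \{0\}$ and $\mathcal{R}(P) \setminus \{0\}$ is dense there, so $w(\widetilde{B})$ equals its supremum over $0 \ne z \in \mathcal{R}(P)$; writing $z = Z_P x$ reduces this to $|\langle Bx, x\rangle_P|/\|x\|_P^2 \le w_P(B)$. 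Thus $w(\widetilde{B}) \le w_P(B)$ as well, completing the proof.

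The point that requires care — and which I expect to be the main obstacle — is the reduction, in the last two paragraphs, of suprema over all of $\mathcal{R}(P^{1/2})$ to suprema over the (in general proper) dense subspace $\mathcal{R}(Z_P) = \mathcal{R}(P)$, together with the harmless exclusion of the degenerate vectors $x$ with $\|x\|_P = 0$; both come down to continuity of the relevant functionals and boundedness of $\widetilde{B}$. Everything else is a direct computation with the de Branges--Rovnyak inner product and Lemma~\ref{lem1-4}.
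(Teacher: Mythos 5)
Your proof is correct. Note that the paper itself offers no proof of this lemma --- it is imported verbatim from the cited references \cite{acg3, B-KPS, kais01} --- and your argument is essentially the standard one found there: show $Z_P$ carries $\langle\cdot,\cdot\rangle_P$ to the de Branges--Rovnyak inner product (so $\|Z_Px\|=\|x\|_P$), use the intertwining $Z_PB=\widetilde{B}Z_P$ for one inequality in each identity, and use density of $\mathcal{R}(Z_P)=\mathcal{R}(P)$ in $\mathcal{R}(P^{1/2})$ for the reverse inequalities. The only point worth making explicit is that the density reduction needs $\widetilde{B}$ to be bounded; this is indeed part of Lemma~\ref{lem1-4} as stated in \cite{acg3} (where $\widetilde{B}\in\mathcal{B}(\mathcal{R}(P^{1/2}))$), so your use of it is legitimate, and your verification that $\mathcal{R}(P)$ is dense (via $\overline{\mathcal{R}(P^{1/2})}=\overline{\mathcal{R}(P)}$ and the isometry $a\mapsto P^{1/2}a$ on $\overline{\mathcal{R}(P)}$) is sound.
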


\begin{lemma}[\cite{BFP}]\label{tilde2020}
Let $\mathbb{P}= [P_{ij}]_{n \times n}$ be an $n\times n$ operator matrix
such that $P_{ij} \in \mathcal{B}_{P^{1/2}}(\mathcal{H})$ for all $i,j$.
Then $\mathbb{P} \in \mathcal{B}_{I_n\otimes P^{1/2}}(\mathcal{H}^n)$ and 
$\widetilde{\mathbb{P}}=[\widetilde{P_{ij}}]_{n \times n}.$
\end{lemma}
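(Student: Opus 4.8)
The plan is to reduce everything to the single-entry case handled by Lemma~\ref{lem1-4}, applied with the positive operator $I_n \otimes P$ on $\mathcal{H}^n = \mathbb{C}^n \otimes \mathcal{H}$ in place of $P$. First I would record the relevant identifications: $(I_n \otimes P)^{1/2} = I_n \otimes P^{1/2}$, so $\mathcal{R}((I_n \otimes P)^{1/2}) = \mathcal{R}(P^{1/2})^n$; moreover $\overline{\mathcal{R}(I_n \otimes P)} = \overline{\mathcal{R}(P)}^n$, whence $M_{\overline{\mathcal{R}(I_n \otimes P)}} = I_n \otimes M_{\overline{\mathcal{R}(P)}}$, and so the de Branges--Rovnyak inner product on $\mathcal{R}((I_n \otimes P)^{1/2})$ is exactly the orthogonal direct sum of $n$ copies of the one on $\mathcal{R}(P^{1/2})$. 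Consequently the map $Z_{I_n \otimes P} : \mathcal{H}^n \to \mathcal{R}(P^{1/2})^n$ acts entrywise, $Z_{I_n \otimes P}(x_1, \dots, x_n)^T = (Z_P x_1, \dots, Z_P x_n)^T$, and the array $[\widetilde{P_{ij}}]_{n \times n}$ is a genuine bounded operator on the Hilbert space $\mathcal{R}(P^{1/2})^n$, since each $\widetilde{P_{ij}}$ is bounded on $\mathcal{R}(P^{1/2})$ by Lemmas~\ref{lem1-4} and~\ref{lem2-4}.

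The main step is then a short intertwining computation. For $x = (x_1, \dots, x_n)^T \in \mathcal{H}^n$, the $i$-th component of $Z_{I_n \otimes P}(\mathbb{P}x)$ is $Z_P\bigl( \sum_j P_{ij} x_j \bigr) = \sum_j (Z_P P_{ij}) x_j = \sum_j \widetilde{P_{ij}}(Z_P x_j)$, where the last equality uses $Z_P P_{ij} = \widetilde{P_{ij}} Z_P$, valid for every $i,j$ because $P_{ij} \in \mathcal{B}_{P^{1/2}}(\mathcal{H})$ (Lemma~\ref{lem1-4}). But $\sum_j \widetilde{P_{ij}}(Z_P x_j)$ is precisely the $i$-th component of $[\widetilde{P_{ij}}]_{n \times n}\, Z_{I_n \otimes P} x$. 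Hence $Z_{I_n \otimes P}\, \mathbb{P} = [\widetilde{P_{ij}}]_{n \times n}\, Z_{I_n \otimes P}$ as operators $\mathcal{H}^n \to \mathcal{R}(P^{1/2})^n$.

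Finally, I would apply Lemma~\ref{lem1-4} in the ``if'' direction, now with $I_n \otimes P$ playing the role of $P$: the existence of an operator on $\mathcal{R}((I_n \otimes P)^{1/2})$ intertwining $Z_{I_n \otimes P}$ with $\mathbb{P}$ shows $\mathbb{P} \in \mathcal{B}_{I_n \otimes P^{1/2}}(\mathcal{H}^n)$, and the uniqueness clause of the same lemma forces $\widetilde{\mathbb{P}} = [\widetilde{P_{ij}}]_{n \times n}$. (Alternatively, membership alone follows quickly from the Douglas-theorem characterization, since $\mathbb{P}^* = [P_{ji}^{*}]$ maps $\mathcal{R}(P^{1/2})^n$ into itself; but the intertwining route yields both conclusions at once.) The only point requiring genuine care --- and hence the ``main obstacle,'' though it is bookkeeping rather than difficulty --- is the first paragraph: one must verify that the Hilbert-space structure de Branges--Rovnyak attach to $\mathcal{R}((I_n \otimes P)^{1/2})$ really coincides with the $n$-fold direct sum of the structure on $\mathcal{R}(P^{1/2})$, so that $[\widetilde{P_{ij}}]_{n \times n}$ makes sense as an operator there and the entrywise description of $Z_{I_n \otimes P}$ is legitimate. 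Everything after that is the formal computation above.
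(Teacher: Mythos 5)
Your argument is correct: the paper does not prove this lemma (it is quoted from \cite{BFP}), and your reduction --- identifying $\mathcal{R}((I_n\otimes P)^{1/2})$ with the orthogonal direct sum of $n$ copies of the de Branges--Rovnyak space $\mathcal{R}(P^{1/2})$, noting that $Z_{I_n\otimes P}$ acts entrywise, verifying the intertwining relation $Z_{I_n\otimes P}\,\mathbb{P}=[\widetilde{P_{ij}}]\,Z_{I_n\otimes P}$, and invoking Lemma~\ref{lem1-4} (for the positive operator $I_n\otimes P$) together with its uniqueness clause --- is exactly the standard proof of this result. No gaps; the only care needed is the point you flag yourself, that $(I_n\otimes P)^{1/2}=I_n\otimes P^{1/2}$ and the induced inner product on $\mathcal{R}(P^{1/2})^n$ is the componentwise one, which you verify via $M_{\overline{\mathcal{R}(I_n\otimes P)}}=I_n\otimes M_{\overline{\mathcal{R}(P)}}$.
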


We can now present our results. Using Theorem~\ref{th4},
we show the following extension of it:

\begin{theorem}\label{th4-4}
     Let $A=[a_{ij}]\in \mathcal{M}_n(\mathbb{C})$ and $B\in \mathcal{B}_{P^{1/2}}(\mathcal{H})$. Then 
   $w_{I_n\otimes P}(A\otimes B)  \leq w(C),$
    where ${C}=[c_{ij}]$ with $c_{ij} = |a_{ii}|w_P(B)$ if
    $i=j$ and
     $w_{I_2\otimes P}\left(\begin{bmatrix}
         0& a_{ij}\\
         a_{ji}&0
     \end{bmatrix} \otimes B\right)$ otherwise.
\end{theorem}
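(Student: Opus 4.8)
The plan is to reduce Theorem~\ref{th4-4} to its Hilbert space counterpart, Theorem~\ref{th4}, by passing through the ``tilde'' construction that replaces semi-Hilbertian data with genuine Hilbert space data on $\mathcal{R}(P^{1/2})$. First I would record that $A \otimes B \in \mathcal{B}_{I_n \otimes P^{1/2}}(\mathcal{H}^n)$: this follows from Lemma~\ref{tilde2020} applied to the operator matrix $[a_{ij}B]$, since each $a_{ij}B \in \mathcal{B}_{P^{1/2}}(\mathcal{H})$ (as $\mathcal{B}_{P^{1/2}}(\mathcal{H})$ is an algebra containing scalar multiples of $B$). Moreover Lemma~\ref{tilde2020} identifies $\widetilde{A \otimes B} = [\widetilde{a_{ij}B}]_{n \times n} = [a_{ij}\widetilde{B}]_{n \times n} = A \otimes \widetilde{B}$, where the middle equality uses Lemma~\ref{lem-4} (linearity of the tilde map).

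Next I would invoke Lemma~\ref{lem2-4}, which gives $w_{I_n \otimes P}(A \otimes B) = w(\widetilde{A \otimes B}) = w(A \otimes \widetilde{B})$, an honest numerical radius of an operator matrix over the Hilbert space $\mathcal{R}(P^{1/2})$. Now Theorem~\ref{th4} applies directly to $A \otimes \widetilde{B}$: it yields $w(A \otimes \widetilde{B}) \leq w(\widetilde{C})$, where $\widetilde{C} = [\tilde c_{ij}]$ has diagonal entries $\tilde c_{ii} = |a_{ii}| w(\widetilde{B})$ and off-diagonal entries $\tilde c_{ij} = w\left(\begin{bmatrix} 0 & a_{ij} \\ a_{ji} & 0 \end{bmatrix} \otimes \widetilde{B}\right)$. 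It then remains to check that $\widetilde{C}$ coincides entrywise with the matrix $C$ in the statement: for the diagonal, $w(\widetilde{B}) = w_P(B)$ by Lemma~\ref{lem2-4}, so $\tilde c_{ii} = |a_{ii}| w_P(B) = c_{ii}$; for the off-diagonal, applying Lemma~\ref{tilde2020} and Lemma~\ref{lem-4} again to the $2 \times 2$ operator matrix $\begin{bmatrix} 0 & a_{ij}B \\ a_{ji}B & 0 \end{bmatrix}$ shows its tilde is $\begin{bmatrix} 0 & a_{ij}\widetilde{B} \\ a_{ji}\widetilde{B} & 0 \end{bmatrix} = \begin{bmatrix} 0 & a_{ij} \\ a_{ji} & 0 \end{bmatrix} \otimes \widetilde{B}$, whence by Lemma~\ref{lem2-4} again $\tilde c_{ij} = w_{I_2 \otimes P}\left(\begin{bmatrix} 0 & a_{ij} \\ a_{ji} & 0 \end{bmatrix} \otimes B\right) = c_{ij}$. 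Chaining the (in)equalities gives $w_{I_n \otimes P}(A \otimes B) = w(A \otimes \widetilde{B}) \leq w(\widetilde{C}) = w(C)$, as claimed.

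I do not expect a serious obstacle here — the whole point of the tilde machinery assembled in Lemmas~\ref{lem1-4}--\ref{tilde2020} is precisely to make such transfers mechanical. The one place requiring a little care is the bookkeeping of ``tilde commutes with forming operator matrices and with scalar multiplication'', i.e.\ making sure Lemma~\ref{tilde2020} is applied with the right ambient positive operator ($I_n \otimes P$ for the $n \times n$ case, $I_2 \otimes P$ for the $2 \times 2$ anti-diagonal blocks) and that these are mutually consistent; but this is routine. One should also note that $\mathcal{R}(P^{1/2})$ may be infinite-dimensional while $A$ is genuinely $n \times n$ — this is fine, since Theorem~\ref{th4} is stated exactly for $A \in \mathcal{M}_n(\mathbb{C})$ and $B$ an arbitrary bounded operator on any Hilbert space, which is what $\widetilde{B}$ is here.
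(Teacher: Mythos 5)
Your proposal is correct and matches the paper's own proof: the paper likewise uses Lemmas~\ref{lem1-4}--\ref{tilde2020} to identify $w_{I_n\otimes P}(A\otimes B)=w(\widetilde{A\otimes B})=w(A\otimes\widetilde{B})$, applies Theorem~\ref{th4} to $A\otimes\widetilde{B}$, and then translates the entries of the resulting matrix back via $w(\widetilde{B})=w_P(B)$ and the tilde of the $2\times 2$ blocks. You have simply written out the bookkeeping more explicitly than the paper does.
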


\begin{proof}
Following Theorem~\ref{th4} together with Lemmas~\ref{lem1-4},
\ref{lem-4}, \ref{lem2-4} and~\ref{tilde2020},  we obtain that
         $w_{I_n\otimes P}(A\otimes B)= w(\widetilde{A\otimes B} )= w(A\otimes \widetilde{B} ) \leq w(C),$
    \text{where} ${C}=[c_{ij}]$ with $c_{ij}= |a_{ii}|w(\widetilde{B})$ if $i=j$
    and $w\left(\begin{bmatrix}
         0& a_{ij}\\
         a_{ji}&0
     \end{bmatrix} \otimes \widetilde{B}\right)$ otherwise.
  But then $C$ is as claimed.
\end{proof}

From the inequalities~\eqref{p3} and using Lemmas~\ref{lem1-4},
\ref{lem-4} and~\ref{lem2-4}, we obtain that   
\begin{equation}\label{p3-4}
w(A)w_P(B) \leq w_{I_n\otimes P}(A\otimes B) \leq  \min \left\{
w(A)\|B\|_P, w_P(B)\|A\|\right\}, \quad \forall A\in
\mathcal{M}_n(\mathbb{C}), B\in \mathcal{B}_{P^{1/2}}(\mathcal{H}).
\end{equation}

From these inequalities and using Theorem~\ref{th4-4}, we can deduce the
following.
 
\begin{cor}\label{th1-}
Let $A=[a_{ij}]\in \mathcal{M}_n(\mathbb{C})$ and $B\in
\mathcal{B}_{P^{1/2}}(\mathcal{H}).$  Then
\begin{enumerate}
    \item $w_{I_n \otimes P}(A\otimes B)  \leq  w\left( {\bf C}
 	\right),$ where $ {\bf C}=\begin{bmatrix}
 		|a_{11}|w_P(B)& |a_{12}| \|B\|_P& \ldots& |a_{1n}| \|B\|_P \\
 		|a_{21}| \|B\|_P& |a_{22}|w_P(B)& \ldots& |a_{2n}| \|B\|_P\\
 		\vdots&\vdots& \ddots & \vdots\\
 		|a_{n1}| \|B\|_P& |a_{n2}| \|B\|_P& \ldots& |a_{nn}|w_P(B)
 	\end{bmatrix}.$

  \item  Moreover, if all entries of $A$ are non-negative, then 
    \begin{eqnarray}\label{final}
         w_{I_n \otimes P}(A\otimes B)  \leq  w\left( {\bf C}
 	\right) \leq  w(A)\|B\|_P.
    \end{eqnarray}
\end{enumerate}
    \end{cor}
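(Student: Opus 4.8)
The plan is to derive Corollary~\ref{th1-} from Theorem~\ref{th4-4} by exactly the same ``transfer-to-the-Hilbert-space-$\mathcal{R}(P^{1/2})$'' argument used to prove Theorem~\ref{th4-4} itself, but now applied to the already-proved \emph{Hilbert space} statements in Section~\ref{sec-ten}. Concretely, for part (1), I would start from the conclusion of Theorem~\ref{th4-4}, namely $w_{I_n\otimes P}(A\otimes B)\leq w(C)$ where the off-diagonal entries of $C$ are $c_{ij}=w_{I_2\otimes P}\!\left(\begin{bmatrix}0&a_{ij}\\ a_{ji}&0\end{bmatrix}\otimes B\right)$. Then I apply the two-variable inequality~\eqref{p3-4} in the $2\times 2$ case (or equivalently pass to $\widetilde{B}$ via Lemmas~\ref{lem1-4},~\ref{lem-4},~\ref{lem2-4},~\ref{tilde2020} and invoke~\eqref{Etemp} from the proof of Theorem~\ref{th4}) to bound each such $c_{ij}$ by $\frac12(|a_{ij}|+|a_{ji}|)\|B\|_P$. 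Since these are the off-diagonal entries of the symmetrization of ${\bf C}$ and the diagonal entries $|a_{ii}|w_P(B)$ already match, the entrywise monotonicity~\eqref{p2} of the numerical radius gives $w(C)\leq w\big(\tfrac12({\bf C}+{\bf C}^{*})\big)$; but ${\bf C}$ has non-negative real entries, so by~\eqref{p1} (applied in the matrix world, not the operator world) $w(C)\leq w({\bf C})$, which is the claim. Alternatively, and perhaps most cleanly, one simply observes $C^\circ$ (in the notation of Theorem~\ref{th4}, with $\widetilde B$ in place of $B$) \emph{equals} ${\bf C}$ once one uses $\|\widetilde B\|=\|B\|_P$ and $w(\widetilde B)=w_P(B)$, so part (1) is literally the inequality $w(\widetilde{A\otimes B})=w(A\otimes\widetilde B)\leq w(C)\leq w(C^\circ)={\bf C}$ already contained in Theorem~\ref{th4}.

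For part (2), assume all $a_{ij}\geq 0$. Then ${\bf C}$ is a non-negative matrix with $({\bf C})_{ij}=a_{ij}\|B\|_P$ off the diagonal and $({\bf C})_{ii}=a_{ii}w_P(B)\leq a_{ii}\|B\|_P$ on the diagonal (using $w_P(B)\leq\|B\|_P$, recorded just after the definition of $w_P$). Hence $0\leq({\bf C})_{ij}\leq(\|B\|_P A)_{ij}$ for all $i,j$, so by~\eqref{p2} we get $w({\bf C})\leq w(\|B\|_P A)=\|B\|_P\,w(A)$. Combined with part (1), this yields~\eqref{final}.

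I do not expect any genuine obstacle here; the corollary is a packaging of Theorems~\ref{th4} and~\ref{th4-4} together with the elementary monotonicity facts~\eqref{p1}--\eqref{p2} and the identities $\|\widetilde B\|=\|B\|_P$, $w(\widetilde B)=w_P(B)$. The only point requiring a little care is to make sure the diagonal entries are handled by the inequality $w_P(B)\leq\|B\|_P$ rather than by monotonicity of the ``off-diagonal'' estimate~\eqref{Etemp} (which only controls $c_{ij}$ for $i\neq j$); this is exactly why the matrix $C$ in Theorem~\ref{th4-4} keeps $|a_{ii}|w_P(B)$ on the diagonal and why dominating $C$ by ${\bf C}$ is still valid. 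Everything else is a direct substitution into already-proved statements, so the proof will be short.
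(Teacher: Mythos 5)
Your proof is correct and follows essentially the same route as the paper, which deduces the corollary from Theorem~\ref{th4-4} together with~\eqref{p3-4} and the monotonicity facts~\eqref{p1}--\eqref{p2} (your ``cleanest'' variant, transferring to $\widetilde B$ and quoting Theorem~\ref{th4} with $\|\widetilde B\|=\|B\|_P$, $w(\widetilde B)=w_P(B)$, is exactly how the paper proves Theorem~\ref{th4-4} itself). Only a cosmetic slip: at the end of part (1) you wrote ``$w(C)\leq w(C^\circ)={\bf C}$'' where you mean $w(C^\circ)=w({\bf C})$.
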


Finally, we completely characterize the equality in $w_{I_n \otimes
P}(A\otimes B) \leq w(A) \|B\|_P$ in~\eqref{p3-4}.

\begin{prop}\label{cor1-4-}
Let $A=[a_{ij}]\in \mathcal{M}_n(\mathbb{C})$ and $B\in \mathcal{B}_{P^{1/2}}(\mathcal{H}).$ 
Then  $w_{I_n \otimes P}(A\otimes B)= w(A) \|B\|_P$ if  $w_P(B)=\|B\|_P.$ Conversely,
if $a_{ij}\geq 0$ and $a_{ii}\neq 0$ for all $i,j$ and $w_{I_n \otimes P}(A\otimes B)= w(A) \|B\|_P$, then $w_P(B)=\|B\|_P.$
\end{prop}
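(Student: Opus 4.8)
The plan is to reduce Proposition~\ref{cor1-4-} to the already-established Theorem~\ref{cor1} by passing through the dictionary $B \mapsto \widetilde{B}$ provided by Lemmas~\ref{lem1-4}--\ref{tilde2020}. Recall that for $B \in \mathcal{B}_{P^{1/2}}(\mathcal{H})$ the operator $\widetilde{B}$ lives on the genuine Hilbert space $\mathcal{R}(P^{1/2})$, and Lemma~\ref{lem2-4} gives $\|B\|_P = \|\widetilde{B}\|$ and $w_P(B) = w(\widetilde{B})$; moreover Lemmas~\ref{lem-4} and~\ref{tilde2020} give $\widetilde{A \otimes B} = A \otimes \widetilde{B}$ as operators on $\mathcal{R}(P^{1/2})^n$, so that $w_{I_n \otimes P}(A \otimes B) = w(A \otimes \widetilde{B})$ exactly as in the proof of Theorem~\ref{th4-4}. (Note also $w(A)$ is the ordinary numerical radius of the scalar matrix $A$, which is unchanged.)

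With these identifications in place, the first (sufficiency) direction is immediate: if $w_P(B) = \|B\|_P$, then $w(\widetilde{B}) = \|\widetilde{B}\|$, so applying the sufficiency half of Theorem~\ref{cor1} to the pair $(A, \widetilde{B})$ over the Hilbert space $\mathcal{R}(P^{1/2})$ yields $w(A \otimes \widetilde{B}) = w(A)\|\widetilde{B}\|$, which translates back to $w_{I_n \otimes P}(A \otimes B) = w(A)\|B\|_P$. For the converse, assume $a_{ij} \geq 0$ and $a_{ii} \neq 0$ for all $i,j$, and that $w_{I_n \otimes P}(A \otimes B) = w(A)\|B\|_P$. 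Translating via the dictionary, this says $w(A \otimes \widetilde{B}) = w(A)\|\widetilde{B}\|$, which is precisely the equality hypothesis in the converse half of Theorem~\ref{cor1} applied to $(A, \widetilde{B})$; hence $w(\widetilde{B}) = \|\widetilde{B}\|$, i.e.\ $w_P(B) = \|B\|_P$, as desired.

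The only point requiring a moment of care — and the place I expect the one genuine obstacle — is checking that Theorem~\ref{cor1} is genuinely applicable to $\widetilde{B}$, i.e.\ that $\widetilde{B} \in \mathcal{B}(\mathcal{R}(P^{1/2}))$ is an honest bounded operator on an honest (complex, nonzero) Hilbert space with no degeneracy issues. Lemma~\ref{lem1-4} guarantees existence and uniqueness of $\widetilde{B}$ precisely under the hypothesis $B \in \mathcal{B}_{P^{1/2}}(\mathcal{H})$, and $\mathcal{R}(P^{1/2})$ equipped with the de Branges--Rovnyak inner product is a bona fide Hilbert space since $P \neq 0$; so Theorem~\ref{cor1}'s standing assumptions are met. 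One should also note that the Kronecker product $A \otimes \widetilde{B}$ on $\mathcal{R}(P^{1/2})^n$ matches the one used implicitly when invoking Theorem~\ref{cor1} — this is exactly the content of Lemma~\ref{tilde2020} together with Lemma~\ref{lem-4}. Everything else is a pure transcription, so the proof is short.

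\begin{proof}[Proof of Proposition~\ref{cor1-4-}]
By Lemmas~\ref{lem1-4}, \ref{lem-4}, \ref{lem2-4} and~\ref{tilde2020}, as in the proof of Theorem~\ref{th4-4}, the operator $\widetilde{B} \in \mathcal{B}(\mathcal{R}(P^{1/2}))$ satisfies
\[
w_{I_n \otimes P}(A \otimes B) = w(A \otimes \widetilde{B}), \qquad
w_P(B) = w(\widetilde{B}), \qquad \|B\|_P = \|\widetilde{B}\|,
\]
and $w(A)$ is unchanged. Since $\mathcal{R}(P^{1/2})$ (with the de Branges--Rovnyak inner product) is a nonzero complex Hilbert space and $\widetilde{B}$ is bounded on it, Theorem~\ref{cor1} applies to the pair $(A, \widetilde{B})$.

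If $w_P(B) = \|B\|_P$, then $w(\widetilde{B}) = \|\widetilde{B}\|$, so Theorem~\ref{cor1} gives $w(A \otimes \widetilde{B}) = w(A) \|\widetilde{B}\|$, i.e.\ $w_{I_n \otimes P}(A \otimes B) = w(A) \|B\|_P$. Conversely, suppose all $a_{ij} \geq 0$, all $a_{ii} \neq 0$, and $w_{I_n \otimes P}(A \otimes B) = w(A) \|B\|_P$. Then $w(A \otimes \widetilde{B}) = w(A) \|\widetilde{B}\|$, so the converse part of Theorem~\ref{cor1} applied to $(A, \widetilde{B})$ yields $w(\widetilde{B}) = \|\widetilde{B}\|$, that is, $w_P(B) = \|B\|_P$.
\end{proof}
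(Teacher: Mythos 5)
Your proof is correct: the tilde dictionary does exactly what you claim ($\widetilde{A\otimes B}=A\otimes\widetilde{B}$ on $\mathcal{R}(P^{1/2})^n$ by Lemmas~\ref{lem1-4}, \ref{lem-4} and~\ref{tilde2020}, with $w_{I_n\otimes P}(A\otimes B)=w(A\otimes\widetilde{B})$, $w_P(B)=w(\widetilde{B})$, $\|B\|_P=\|\widetilde{B}\|$ by Lemma~\ref{lem2-4}), and since $\mathcal{R}(P^{1/2})$ with the de Branges--Rovnyak inner product is a nonzero Hilbert space, Theorem~\ref{cor1} applies verbatim to $(A,\widetilde{B})$; both directions then transcribe back immediately. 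The paper takes a slightly different route in packaging: rather than quoting Theorem~\ref{cor1} as a black box, it re-runs that theorem's equality argument inside the semi-Hilbertian framework, citing the sandwich inequalities~\eqref{p3-4} and~\eqref{final} (the latter being the $P$-analogue of the $C^\circ$ bound) and then invoking Lemma~\ref{need} again with $\lambda=w_P(B)/\|B\|_P$. Since \eqref{p3-4} and~\eqref{final} were themselves obtained through the same tilde correspondence, the two proofs rest on identical machinery; yours makes the logical dependence more transparent and avoids repeating the Lemma~\ref{need} step, while the paper's version keeps all intermediate statements phrased intrinsically in the $P$-seminorm setting, which is consistent with how the rest of Section~\ref{sec-semi} is organized.
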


\begin{proof}
These assertions follow from  the inequalities~\eqref{p3-4}
and~\eqref{final} and Lemma~\ref{need}.
\end{proof}

\section{Numerical radius \blue{(in)equalities} for Schur products
\blue{and powers}}\label{sec-had}
   
We now apply numerical radius inequalities for Kronecker
products to study the analogous (in)equalities for Schur/entrywise
products of matrices. (Thus, $\mathcal{H} = \mathbb{C}^n$ in this
section.) As the Schur product $A\circ B$ is a principal
submatrix of the Kronecker product $A\otimes B$, we first record:

\begin{lemma}\label{lem4}
    Let $A, B\in \mathcal{M}_n(\mathbb{C})$. Then
    $w(A \circ B)  \leq w(A\otimes B).$
\end{lemma}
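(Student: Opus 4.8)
The plan is to realize $A \circ B$ as a compression of $A \otimes B$ to a suitable subspace, and then invoke the fact that the numerical radius does not increase under compression. Concretely, write $A \otimes B = [a_{ij} B]_{i,j=1}^n \in \mathcal{M}_{n^2}(\mathbb{C})$, thought of as acting on $\mathbb{C}^n \otimes \mathbb{C}^n$ with the standard basis $\{ {\bf e}_i \otimes {\bf e}_j \}$. The Schur product $A \circ B$ has $(i,j)$ entry $a_{ij} b_{ij}$, which is precisely the $({\bf e}_i \otimes {\bf e}_i, {\bf e}_j \otimes {\bf e}_j)$ matrix entry of $A \otimes B$: indeed $\langle (A \otimes B)({\bf e}_j \otimes {\bf e}_j), {\bf e}_i \otimes {\bf e}_i \rangle = \langle A {\bf e}_j, {\bf e}_i \rangle \langle B {\bf e}_j, {\bf e}_i \rangle = a_{ij} b_{ij}$. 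So if $\mathcal{S} := \operatorname{span}\{ {\bf e}_k \otimes {\bf e}_k : 1 \leq k \leq n \}$ and $V : \mathbb{C}^n \to \mathbb{C}^n \otimes \mathbb{C}^n$ is the isometry ${\bf e}_k \mapsto {\bf e}_k \otimes {\bf e}_k$, then $A \circ B = V^* (A \otimes B) V$.

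The key step is then the standard compression inequality: for any operator $T$ and any isometry $V$, one has $W(V^* T V) \subseteq W(T)$, hence $w(V^* T V) \leq w(T)$. This is immediate from the definition of the numerical range: if $x$ is a unit vector in the domain of $V$, then $Vx$ is a unit vector and $\langle V^* T V x, x \rangle = \langle T(Vx), Vx \rangle \in W(T)$. Applying this with $T = A \otimes B$ and $V$ as above gives $w(A \circ B) = w(V^*(A \otimes B)V) \leq w(A \otimes B)$, which is exactly the claim.

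I do not expect any real obstacle here — the only thing to be careful about is the bookkeeping identifying the entries of $A \circ B$ with the stated compression, i.e. making sure that restricting $A \otimes B$ to the ``diagonal tensor'' subspace $\mathcal{S}$ reproduces $A \circ B$ (and not, say, $A \odot B$ in some other basis ordering). Once that identification is written down cleanly, the numerical-radius inequality under compression finishes the proof in one line. An alternative phrasing, if one prefers to avoid introducing $V$ explicitly, is simply to note that $A \circ B$ is a principal submatrix of $A \otimes B$ (as stated in the lead-in to the lemma) and that the numerical radius of a principal submatrix is bounded by that of the full matrix — which is the same compression fact specialized to coordinate subspaces.
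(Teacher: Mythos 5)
Your proof is correct and follows essentially the same route as the paper, which simply records that $A \circ B$ is a principal submatrix (equivalently, a compression via the isometry ${\bf e}_k \mapsto {\bf e}_k \otimes {\bf e}_k$) of $A \otimes B$, so the numerical radius can only decrease. The entry computation $\langle (A\otimes B)({\bf e}_j \otimes {\bf e}_j), {\bf e}_i \otimes {\bf e}_i\rangle = a_{ij}b_{ij}$ and the compression inequality are both verified correctly, so nothing further is needed.
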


This lemma (is well known, and) together with~\eqref{p3} gives
\begin{equation}\label{n-34}
w(A\circ B) \leq \min \{ w(A) \|B\|, w(B) \|A\|\} \leq 2w(A)w(B),
\qquad \forall A , B \in \mathcal{M}_n(\mathbb{C}).
\end{equation}
    
\blue{  Clearly, if $w(A)=\|A\|$} or $w(B)=\|B\|,$ then
        $w\left(A\circ B
 	\right) \leq   w(A) w(B)$
 (also see in \cite[ Corollary 4.2.17]{Horn-2}).
    Ando and Okubo proved \cite[Corollary 4]{Ando} that  if $A =[a_{ij}]\in \mathcal{M}_n(\mathbb{C})$ is positive semidefinite, then 
        $w\left(A\circ B
 	\right) \leq   \max_i \{ a_{ii}\} w(B).$
For another proof one can see \cite[Propsition 4.1]{Gau2016}. The
equality conditions of the above inequalities are studied in
\cite{Gau2016}. 

We begin by improving on the numerical radius inequalities~\eqref{n-34}.

\begin{prop}\label{th5}
Let $A,B \in \mathcal{M}_n(\mathbb{C}).$ Then $w (A \circ B) \leq w(C)$,
where $C$ is as in Theorem~\ref{th4}. The analogous result by switching
$A$ and $B$ also holds (by symmetry of the Schur product).
\end{prop}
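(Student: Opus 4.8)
The plan is to deduce Proposition~\ref{th5} immediately from Theorem~\ref{th4} combined with Lemma~\ref{lem4}. The key observation is that Lemma~\ref{lem4} gives $w(A \circ B) \leq w(A \otimes B)$ for any $A, B \in \mathcal{M}_n(\mathbb{C})$, and Theorem~\ref{th4} gives $w(A \otimes B) \leq w(C)$ with $C = [c_{ij}]$ as described there (diagonal entries $|a_{ii}| w(B)$ and off-diagonal entries $w\!\left(\begin{bmatrix} 0 & a_{ij} \\ a_{ji} & 0 \end{bmatrix} \otimes B\right)$). Chaining these two inequalities yields $w(A \circ B) \leq w(C)$, which is exactly the assertion.

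Concretely, I would write: by Lemma~\ref{lem4}, $w(A \circ B) \leq w(A \otimes B)$; by Theorem~\ref{th4}, $w(A \otimes B) \leq w(C)$; hence $w(A \circ B) \leq w(C)$. For the ``analogous result by switching $A$ and $B$,'' I would note that $A \circ B = B \circ A$ (the Schur product is commutative), so applying the already-proved inequality with the roles of $A$ and $B$ interchanged gives $w(A \circ B) = w(B \circ A) \leq w(C')$, where $C'$ is the matrix from Theorem~\ref{th4} built from $B$ (diagonal entries $|b_{ii}| w(A)$, off-diagonal entries $w\!\left(\begin{bmatrix} 0 & b_{ij} \\ b_{ji} & 0 \end{bmatrix} \otimes A\right)$).

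There is essentially no obstacle here: the proposition is a straightforward corollary, and the only thing to be careful about is citing Lemma~\ref{lem4} in the right direction (passing from the principal submatrix $A \circ B$ to the full Kronecker product $A \otimes B$, which can only increase the numerical radius, since the numerical range of a compression is contained in that of the original operator). One could optionally remark that since $w(C) \leq w(C^\circ)$ by Theorem~\ref{th4}, this also recovers and refines the bounds in~\eqref{n-34}, but that is not needed for the statement itself. So the proof is a two-line deduction plus a one-line symmetry remark.

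\begin{proof}
By Lemma~\ref{lem4} we have $w(A \circ B) \leq w(A \otimes B)$, and by Theorem~\ref{th4} we have $w(A \otimes B) \leq w(C)$ with $C$ as in that theorem. Combining these gives $w(A \circ B) \leq w(C)$. Finally, since $A \circ B = B \circ A$, applying the inequality just proved with $A$ and $B$ interchanged yields $w(A \circ B) = w(B \circ A) \leq w(C')$, where $C'$ is the matrix from Theorem~\ref{th4} formed using $B$ in place of $A$.
\end{proof}
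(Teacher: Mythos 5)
Your proof is correct and matches the paper's own argument exactly: the paper also deduces the inequality by chaining Lemma~\ref{lem4} ($w(A\circ B)\leq w(A\otimes B)$) with Theorem~\ref{th4} ($w(A\otimes B)\leq w(C)$), and handles the switched version by the commutativity of the Schur product.
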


\begin{proof}
We only show the first assertion: it follows from Theorem~\ref{th4} and
Lemma~\ref{lem4}.
\end{proof}

Using similar arguments as Corollary~\ref{th2}, from
Proposition~\ref{th5} we deduce the following.

\begin{cor}\label{th2--}
Let  $A =[a_{ij}],\, B \in \mathcal{M}_n(\mathbb{C}).$ Then 
$w\left(A\circ B \right)  \leq w(A') w(B),$ \text{where }
$A' =[a'_{ij}]$ with $a'_{ij}=  \max \left\{ |a_{ij}|, |a_{ji}|
\right\}.$
In particular, if $|a_{ij}|=|a_{ji}|$ for all $i,j$ then
$w\left(A\circ B \right) \leq w\left( [ |a_{ij}| ]_{n\times n}  \right)
w(B)$.
\end{cor}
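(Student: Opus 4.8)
The plan is to derive Corollary~\ref{th2--} directly from Proposition~\ref{th5} by the same route used to pass from Theorem~\ref{th4} to Corollary~\ref{th2}(1). Recall that Proposition~\ref{th5} gives $w(A \circ B) \leq w(C)$ with $C = [c_{ij}]$ as in Theorem~\ref{th4}: diagonal entries $c_{ii} = |a_{ii}| w(B)$ and off-diagonal entries $c_{ij} = w\left(\begin{bmatrix} 0 & a_{ij} \\ a_{ji} & 0 \end{bmatrix} \otimes B\right)$. So it suffices to bound $w(C)$ by $w(A') w(B)$, where $A' = [a'_{ij}]$ with $a'_{ij} = \max\{|a_{ij}|,|a_{ji}|\}$.

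First I would bound each off-diagonal entry of $C$: by the inequality~\eqref{p3} (applied with the $2\times 2$ matrix and $B$), we have
\[
w\left(\begin{bmatrix} 0 & a_{ij} \\ a_{ji} & 0 \end{bmatrix} \otimes B\right) \leq \left\| \begin{bmatrix} 0 & a_{ij} \\ a_{ji} & 0 \end{bmatrix} \right\| w(B) = \max\{|a_{ij}|,|a_{ji}|\}\, w(B) = a'_{ij} w(B),
\]
where the norm of an anti-diagonal $2\times 2$ matrix equals the larger of the moduli of its two entries. For the diagonal, $c_{ii} = |a_{ii}| w(B) = a'_{ii} w(B)$ trivially, since $a'_{ii} = |a_{ii}|$. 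Hence entrywise $0 \leq c_{ij} \leq a'_{ij} w(B)$ for all $i,j$. Since $A'$ has non-negative entries and so does $w(B) A'$, the entrywise monotonicity of the numerical radius~\eqref{p2} gives $w(C) \leq w(w(B) A') = w(A') w(B)$, using homogeneity of the numerical radius. Combining with Proposition~\ref{th5} yields $w(A \circ B) \leq w(A') w(B)$.

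For the ``in particular'' clause, when $|a_{ij}| = |a_{ji}|$ for all $i,j$, we have $a'_{ij} = \max\{|a_{ij}|,|a_{ji}|\} = |a_{ij}|$, so $A' = [|a_{ij}|]_{n\times n}$ and the inequality reads $w(A \circ B) \leq w\left([|a_{ij}|]_{n\times n}\right) w(B)$, exactly as stated. There is no real obstacle here; the only point requiring a (routine) verification is the operator-norm computation for the anti-diagonal $2\times 2$ matrix and the invocation of~\eqref{p2}, both of which are already established in the paper. I would present this as a short proof closely paralleling the proof of Corollary~\ref{th2}(1).
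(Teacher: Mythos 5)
Your proof is correct and follows essentially the same route as the paper: the paper derives this corollary from Proposition~\ref{th5} "using similar arguments as Corollary~\ref{th2}," which is exactly your chain of bounding the off-diagonal entries of $C$ via~\eqref{p3} and the norm of the anti-diagonal $2\times 2$ block, then applying the entrywise monotonicity~\eqref{p2}.
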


Similar to Theorem~\ref{th4}, from Proposition~\ref{th5} we obtain the
following bounds.

\begin{cor}\label{th6}
Let $A, B \in \mathcal{M}_n(\mathbb{C}).$ Then $w(A \circ B) \leq
w(C^\circ)$, where $C^\circ$ is as in Theorem~\ref{th4}. The analogous
result by switching $A$ and $B$ also holds (by symmetry of the Schur
product).
\end{cor}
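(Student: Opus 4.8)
The plan is to deduce Corollary~\ref{th6} directly from Proposition~\ref{th5} together with the chain of inequalities $w(C) \leq w(C^\circ)$ already established in Theorem~\ref{th4}. Since Proposition~\ref{th5} gives $w(A \circ B) \leq w(C)$ where $C$ is the operator matrix defined in Theorem~\ref{th4}, and since the proof of Theorem~\ref{th4} shows (using~\eqref{Etemp}, \eqref{p1}, and the entrywise monotonicity~\eqref{p2}) that $w(C) \leq w(C^\circ)$ where $C^\circ$ has diagonal entries $|a_{ii}|\|B\|$ -- wait, more precisely diagonal entries $|a_{ii}| w(B)$ and off-diagonal entries $|a_{ij}|\|B\|$ -- the composition of the two inequalities yields exactly the desired bound $w(A \circ B) \leq w(C^\circ)$. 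So the core of the proof is a one-line concatenation.

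The key steps, in order, are as follows. First, I would invoke Proposition~\ref{th5} to obtain $w(A \circ B) \leq w(C)$, with $C = [c_{ij}]$ as in Theorem~\ref{th4}. Second, I would recall from the proof of Theorem~\ref{th4} (specifically the last sentence of that proof) that $w(C) \leq w(C^\circ)$; this step itself rests on the anti-diagonal estimate~\eqref{Etemp}, which bounds each off-diagonal entry $c_{ij} = w\!\left(\begin{bmatrix} 0 & a_{ij} \\ a_{ji} & 0 \end{bmatrix} \otimes B\right)$ by $\frac{1}{2}(|a_{ij}| + |a_{ji}|)\|B\|$, combined with~\eqref{p1} and the monotonicity~\eqref{p2} of the numerical radius on matrices with non-negative entries to pass to the matrix $C^\circ$ with entries $|a_{ij}|\|B\|$ off the diagonal. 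Third, concatenate to get $w(A \circ B) \leq w(C^\circ)$. Finally, the ``analogous result by switching $A$ and $B$'' follows immediately because the Schur product is commutative: $A \circ B = B \circ A$, so we may apply the same argument with the roles of $A$ and $B$ interchanged, producing the version of $C^\circ$ built from the entries of $B$ instead.

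There is essentially no obstacle here -- this is a routine corollary. The only thing to be slightly careful about is bookkeeping: making sure the entrywise bound $c_{ij} \leq c^\circ_{ij}$ holds for \emph{all} $i,j$ (including the diagonal, where both sides equal $|a_{ii}| w(B)$) so that~\eqref{p2} applies cleanly, and noting that $C$ and $C^\circ$ are genuine matrices with non-negative real entries (not operator matrices) so that the scalar results~\eqref{p1} and~\eqref{p2} are applicable. Here is the proof.

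\begin{proof}
This follows at once from Proposition~\ref{th5} and Theorem~\ref{th4}: the former gives $w(A \circ B) \leq w(C)$, and the latter gives $w(C) \leq w(C^\circ)$, where $C, C^\circ$ are as in Theorem~\ref{th4}. Concatenating yields $w(A \circ B) \leq w(C^\circ)$. The analogous assertion obtained by switching $A$ and $B$ follows by applying the same reasoning to $B \circ A = A \circ B$.
\end{proof}
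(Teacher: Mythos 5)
Your proof is correct and matches the paper's (implicit) argument exactly: the paper also obtains Corollary~\ref{th6} by concatenating Proposition~\ref{th5} ($w(A\circ B)\leq w(C)$) with the inequality $w(C)\leq w(C^\circ)$ already contained in the statement of Theorem~\ref{th4}, and handles the switched version via commutativity of the Schur product. Nothing further is needed.
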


\begin{remark}
Clearly, if either $a_{ij}\geq 0$ or  $b_{ij}\geq 0$ for all $i,j$, then
Corollary~\ref{th6} gives a stronger upper bound than the one
in~\eqref{n-34}. 
\end{remark}

\noindent Using Corollary~\ref{th6} and proceeding as in
Theorem~\ref{cor1}, we deduce another equality-characterization:

\begin{prop}\label{cor2}
Let $ A=[a_{ij}],\, B\in \mathcal{M}_n(\mathbb{C})$ with $a_{ij}\geq 0$
and $a_{ii}\neq 0$ for all $i,j$.  Then 
$ w\left(A\circ B \right) = w(A) \|B\| \, \textit{ implies } \,
w(B)=\|B\|.$ 
\end{prop}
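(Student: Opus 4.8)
The plan is to mimic the proof of Theorem~\ref{cor1}, using Corollary~\ref{th6} in place of the inequality $w(A \otimes B) \leq w(C^\circ)$. First I would note that since all $a_{ij} \geq 0$, we have the chain of inequalities
\[
w(A \circ B) \leq w(C^\circ) \leq w(A) \|B\|,
\]
where the first inequality is Corollary~\ref{th6} and the second comes from~\eqref{ECtilde} together with entrywise monotonicity~\eqref{p2} (exactly as in the remarks following Theorem~\ref{th4}). Here $C^\circ = [c^\circ_{ij}]$ has diagonal entries $c^\circ_{ii} = a_{ii} w(B)$ and off-diagonal entries $c^\circ_{ij} = |a_{ij}| \|B\| = a_{ij}\|B\|$.

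Next, assuming the hypothesis $w(A \circ B) = w(A)\|B\|$, the sandwiching forces $w(C^\circ) = w(A)\|B\|$. Now I would compute both sides via~\eqref{p1}: since $C^\circ$ and $\|B\| A$ both have non-negative real entries,
\[
w(C^\circ) = \tfrac12 r(C^\circ + (C^\circ)^*) = \tfrac12 \|C^\circ + (C^\circ)^*\|, \qquad
w(A)\|B\| = w(\|B\| A) = \tfrac12 \bigl\| \|B\|(A + A^*) \bigr\|.
\]
Observe that $C^\circ + (C^\circ)^*$ is obtained from $\|B\|(A + A^*)$ by scaling the $i$-th diagonal entry by the factor $w(B)/\|B\|$ (using $a_{ii} \neq 0$ so these diagonal entries are genuinely present), i.e.\ it is exactly the matrix appearing in Lemma~\ref{need} applied to $\|B\|(A+A^*)$ with $\lambda = w(B)/\|B\|$. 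Since the two operator norms are equal, Lemma~\ref{need} gives $\lambda = 1$, i.e.\ $w(B) = \|B\|$, as desired.

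I do not expect any serious obstacle here: the argument is a direct transcription of the proof of Theorem~\ref{cor1}, with the only substantive change being the replacement of the Kronecker-product bound by the Schur-product bound of Corollary~\ref{th6}. The one point requiring a moment's care is checking that Lemma~\ref{need} applies verbatim to $\|B\|(A+A^*)$ --- its diagonal entries are $2\|B\| a_{ii} \neq 0$, so the hypothesis of the lemma is met, and the factor relating $C^\circ + (C^\circ)^*$ to $\|B\|(A+A^*)$ is a common scalar $w(B)/\|B\|$ on every diagonal entry and $1$ off the diagonal, which is precisely the form Lemma~\ref{need} addresses.
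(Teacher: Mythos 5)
Your proof is correct and is essentially identical to the paper's intended argument: the paper proves Proposition~\ref{cor2} precisely by invoking Corollary~\ref{th6} and then repeating the sandwiching-plus-Lemma~\ref{need} argument from Theorem~\ref{cor1}, exactly as you do. Your extra check that Lemma~\ref{need} applies to $\|B\|(A+A^*)$ with $\lambda = w(B)/\|B\|$ is the right point of care and matches the paper's reasoning.
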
  

\noindent However, the converse is not true in general.

We next derive numerical radius inequalities for Schur powers of
complex matrices $w(A^{\circ m})$.

\begin{prop}\label{th10}
Let $A=[a_{ij}]\in \mathcal{M}_n(\mathbb{C})$ and $m\in \mathbb{N}$.
Then:
\begin{enumerate}
\item $w(A^{\circ m}) \leq  w(A) \|A\|^{m-1} \leq 2^{m-1} w^m(A)$.
\item Moreover, if $w(A)=\|A\|$ (e.g.\ if $A$ is normal), then
$w(A^{\circ m}) \leq  w^m(A)$.

\item \blue{If $w(A) = \|A\|$ and $w(A^{\circ m}) = w^m(A)$ for some $m
\geq 1$, then}
\[
\blue{w(A^{\circ m}) = \| A^{\circ m} \| = \|A\|^m = w^m(A).}
\]
\end{enumerate}
\end{prop}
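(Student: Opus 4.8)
Part (3) of Proposition~\ref{th10}: if $w(A) = \|A\|$ and $w(A^{\circ m}) = w^m(A)$ for some $m \geq 1$, then $w(A^{\circ m}) = \|A^{\circ m}\| = \|A\|^m = w^m(A)$.

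\textbf{Proof proposal.} The plan is to establish the chain
\[
\|A\|^m \;=\; w^m(A) \;=\; w(A^{\circ m}) \;\leq\; \|A^{\circ m}\| \;\leq\; \|A\|^m ,
\]
and then observe that equality of the two outer terms forces equality throughout, which is exactly the assertion $w(A^{\circ m}) = \|A^{\circ m}\| = \|A\|^m = w^m(A)$.

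First I would dispose of the left-hand portion. The hypothesis $w(A)=\|A\|$ gives $w^m(A)=\|A\|^m$; the hypothesis $w(A^{\circ m})=w^m(A)$ is assumed outright; and $w(A^{\circ m})\leq\|A^{\circ m}\|$ is the general relation between numerical radius and operator norm recalled in Section~\ref{sec-int}. The only step requiring an argument is the rightmost inequality $\|A^{\circ m}\|\leq\|A\|^m$. For this I would realize $A^{\circ m}$ as a compression of $A^{\otimes m}$: let $V\subseteq(\mathbb{C}^n)^{\otimes m}$ be the span of the orthonormal set $\{{\bf e}_i^{\otimes m}:1\leq i\leq n\}$, with $P_V$ the orthogonal projection onto $V$. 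Then for all $i,j$ one computes
\[
\big\langle A^{\otimes m}\,{\bf e}_j^{\otimes m},\,{\bf e}_i^{\otimes m}\big\rangle
= \big\langle (A{\bf e}_j)^{\otimes m},\,{\bf e}_i^{\otimes m}\big\rangle
= \langle A{\bf e}_j,{\bf e}_i\rangle^m = a_{ij}^m ,
\]
so that in the orthonormal basis $\{{\bf e}_i^{\otimes m}\}$ of $V$ the operator $P_V A^{\otimes m}|_V$ is precisely $A^{\circ m}$. Hence $\|A^{\circ m}\| = \|P_V A^{\otimes m} P_V\| \leq \|A^{\otimes m}\| = \|A\|^m$, the last equality being multiplicativity of the spectral norm under Kronecker powers. (Equivalently, one may cite or re-derive $\|A\circ B\|\leq\|A\|\,\|B\|$, itself the analogous compression of $A\otimes B$ to the span of the ${\bf e}_i\otimes{\bf e}_i$, and iterate it $m$ times.)

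The only real content is therefore the bookkeeping observation that the Schur power embeds in the Kronecker power as a compression; the rest is a direct concatenation of inequalities already available. Concluding: since the extreme terms of the displayed chain are both $\|A\|^m$, every inequality in it is an equality, and in particular $w(A^{\circ m})=\|A^{\circ m}\|=\|A\|^m=w^m(A)$, as claimed.
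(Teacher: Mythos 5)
Your argument for part~(3) is correct and is essentially the paper's own proof: the paper also runs the chain $\|A\|^m = w^m(A) = w(A^{\circ m}) \leq \|A^{\circ m}\| \leq \|A^{\otimes m}\| = \|A\|^m$ and justifies the last inequality by noting that $A^{\circ m}$ is a (principal) submatrix of $A^{\otimes m}$, i.e.\ a compression $P_1 A^{\otimes m} P_2$; you merely make that compression explicit via the projection onto ${\rm span}\{{\bf e}_i^{\otimes m}\}$ and the computation $\langle A^{\otimes m}{\bf e}_j^{\otimes m},{\bf e}_i^{\otimes m}\rangle = a_{ij}^m$, which is a fine (and slightly more self-contained) way to present the same step.

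However, the statement you were asked to prove is the whole of Proposition~\ref{th10}, and your proposal only treats part~(3). Parts~(1) and~(2) are left unproved, and part~(1) is not a triviality: the paper obtains $w(A^{\circ m}) \leq w(A)\,\|A\|^{m-1}$ by combining Lemma~\ref{lem4} with~\eqref{p3} to get $w(A\circ B) \leq w(A\otimes B) \leq w(B)\,\|A\|$ for every $B\in\mathcal{M}_n(\mathbb{C})$, and then iterating with $B = A, A^{\circ 2}, \dots, A^{\circ(m-1)}$; the bound $2^{m-1}w^m(A)$ then follows from $\|A\| \leq 2w(A)$, and part~(2) is immediate from part~(1) when $w(A)=\|A\|$. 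To be a complete proof of the proposition you would need to supply these arguments (or something equivalent) rather than only the equality case in part~(3).
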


\blue{After writing the proof, we will characterize when equality holds
in the second part.}

\begin{proof}
(2)~is immediate from~(1). To show~(1), use Lemma~\ref{lem4}
and~\eqref{p3} to obtain: $w(A\circ B) \leq w(A\otimes B)\leq w(B) \|A\|$
for every $B\in \mathcal{M}_n(\mathbb{C})$. Successively letting $B = A,
A^{\circ 2}, \dots$ yields $w(A^{\circ m} ) \leq w(A) \|A\|^{m-1}.$ The
second inequality now holds because $\|A\| \leq 2 w(A)$.

\blue{Finally, (3)~follows from a chain of inequalities:
\begin{equation}
\|A\|^m = w(A)^m = w(A^{\circ m}) \leq \| A^{\circ m} \| \leq \|
A^{\otimes m} \| = \|A\|^m,
\end{equation}
where the second inequality holds because $A^{\circ m}$ is a submatrix of
$A^{\otimes m}$, hence of the form $P_1 A^{\otimes m} P_2$ for suitable
(projection) operators $P_1, P_2$.}
\end{proof}

We make the following remarks:
\begin{enumerate}
\item  The inequalities in Proposition~\ref{th10} are sharp. E.g.\ if
$A=\begin{bmatrix}
    0&\lambda\\
    0&0
\end{bmatrix}$
for any complex $\lambda$, then by Proposition~\ref{P2x2}, $w(A^{\circ
m}) = w(A) \| A \|^{m-1} = 2^{m-1} w(A)^m = |\lambda|^m/2$ for all $m
\geq 1$.

\item Proposition~\ref{th10} implies that if $w(A^{\circ m}) =  2^{m-1}
w^m(A)$ for some $m\in \mathbb{N}$, then $w(A)=\|A\| / 2.$ However,
the converse is not true in general; e.g.\ consider $A=\begin{bmatrix}
    1&1\\
    -1&-1
\end{bmatrix}.$
\end{enumerate}

Note that the inequality in Proposition~\ref{th10}(2) can be strict. For
example, let
 $A=\begin{bmatrix}
    a&b\\
    b&a
\end{bmatrix}$,
where $a,b>0.$ Then $w(A^{\circ m})=a^m+b^m< (a+b)^m=w^m(A)$ for every
integer $m \geq 2$. Thus, we now completely characterize \blue{the
equality in Proposition~\ref{th10}(2).

\begin{theorem}\label{Tref}
Let $m,n \geq 1$, and suppose $A \in \mathcal{M}_n(\mathbb{C})$ is such
that $w(A) = \| A \|$. Let the eigenvalues of $A$ be listed as $a_1,
\dots, a_n$ such that $|a_1| = \cdots = |a_k| > |a_{k+1}|, \dots, |a_n|$
for some $k \geq 1$.
\begin{enumerate}
\item The Jordan blocks of $A$ corresponding to the ``maximum-modulus
eigenvalues'' $a_1, \dots, a_k$ are of size $1 \times 1$.

\item Let $U = [u_1 | \cdots | u_k | B]$ be any unitary matrix such that
$U^* A U = {\rm diag}(a_1, \dots, a_k) \oplus A'$ for some $A' \in
\mathcal{M}_{n-k}(\mathbb{C})$.
Then $w(A^{\circ m}) = w^m(A)$ if and only if the subspace
\[
{\rm span}_{\mathbb{C}} \{ u_{j_1} \otimes \cdots \otimes u_{j_m} : 1
\leq j_1, \dots, j_m \leq k \} \ \bigcap \ {\rm span}_{\mathbb{C}} \{
{\bf e}_j^{\otimes m} : j = 1, \dots, n \}
\]
is nonzero and contains an eigenvector of $A^{\otimes m}$ --
equivalently, ${\rm span}_{\mathbb{C}} \{ {\bf e}_j^{\otimes m} : j = 1,
\dots, n \}$ contains an eigenvector of $A^{\otimes m}$ with eigenvalue
$\lambda$ such that $|\lambda| = |a_1|^m = \| A \|^m$.
\end{enumerate}

\noindent Moreover, if~(2) holds: $w(A^{\circ m}) = w^m(A)$, then the
same holds for every $1 \leq m' \leq m$.
\end{theorem}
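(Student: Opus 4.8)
The plan is to prove the four assertions of Theorem~\ref{Tref} in turn; throughout we may assume $A \neq 0$. First observe that $w(A) = \|A\|$ forces $r(A) = \|A\|$: for a unit vector $x$ with $|\langle Ax, x\rangle| = w(A) = \|A\|$, equality throughout $\|A\| = |\langle Ax, x\rangle| \le \|Ax\| \le \|A\|$ forces (Cauchy--Schwarz) $Ax = \mu x$ with $|\mu| = \|A\|$; hence $|a_1| = \|A\|$. For~(1), apply Schur triangularization: $A = UTU^*$ with $T$ upper triangular and diagonal $(a_1, \dots, a_n)$ in the chosen order. Since $\|T^*{\bf e}_1\|^2 = \sum_{j=1}^n |t_{1j}|^2 \le \|T^*\|^2 = \|T\|^2 = |a_1|^2$, the first row of $T$ vanishes off the diagonal, so $T = (a_1) \oplus T_1$ with $T_1$ upper triangular; as $T_1$ carries $a_2, \dots, a_k$ on its diagonal (when $k \ge 2$), $\|A\| = r(T_1) \le \|T_1\| \le \|T\| = \|A\|$, and one repeats until $a_1, \dots, a_k$ are all peeled off. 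Thus $A$ is unitarily similar to ${\rm diag}(a_1, \dots, a_k) \oplus A'$ with $r(A') = \max_{j>k}|a_j| < \|A\|$, so $a_1, \dots, a_k$ are not eigenvalues of $A'$ and each of their Jordan blocks is $1 \times 1$. This also furnishes a unitary $U = [u_1 | \cdots | u_k | B]$ as in~(2), and identifies $E := {\rm span}_{\mathbb{C}}\{u_1, \dots, u_k\}$ with the sum of the maximal-modulus eigenspaces of $A$ (hence independent of the choice of $U$), on which $A$ restricts to ${\rm diag}(a_1, \dots, a_k)$.

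Next I would prove the core equivalence: $w(A^{\circ m}) = w^m(A)$ if and only if ${\rm span}_{\mathbb{C}}\{{\bf e}_j^{\otimes m}\}$ contains an eigenvector of $A^{\otimes m}$ with eigenvalue of modulus $\|A\|^m$. The key is the identity $A^{\circ m} = P^* A^{\otimes m} P$ for the isometry $P : {\bf e}_j \mapsto {\bf e}_j^{\otimes m}$. If $w(A^{\circ m}) = w^m(A)$, pick (using finite dimensionality) a unit $u$ with $|\langle A^{\circ m} u, u\rangle| = \|A\|^m$; Proposition~\ref{th10} gives $\|A^{\circ m}\| = \|A\|^m = \|A^{\otimes m}\|$, so with $v := Pu$ (a unit vector in ${\rm span}\{{\bf e}_j^{\otimes m}\}$) the scalar $\langle A^{\otimes m} v, v\rangle = \langle A^{\circ m} u, u\rangle$ has modulus $\|A^{\otimes m}\|$, whence Cauchy--Schwarz equality forces $A^{\otimes m} v = \mu v$ with $|\mu| = \|A\|^m$. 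Conversely such a $v = Pu$ gives $\langle A^{\circ m} u, u\rangle = \mu$, so $w(A^{\circ m}) \ge \|A\|^m$, and Proposition~\ref{th10}(2) yields equality. To match this with the first formulation in~(2), decompose $\mathbb{C}^n = E \oplus F$ in the $U$-adapted orthogonal basis (so $AF \subseteq F$), giving the $A^{\otimes m}$-invariant splitting $(\mathbb{C}^n)^{\otimes m} = E^{\otimes m} \oplus W$, where $E^{\otimes m} = {\rm span}\{u_{j_1} \otimes \cdots \otimes u_{j_m} : 1 \le j_i \le k\}$ and every summand of $W$ has a tensor factor on which $A$ acts with spectral radius $< \|A\|$. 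Then $A^{\otimes m}$ acts on $E^{\otimes m}$ as ${\rm diag}(a_1, \dots, a_k)^{\otimes m}$, all of whose eigenvalues have modulus $\|A\|^m$, while on $W$ all eigenvalues have modulus $< \|A\|^m$. Hence an eigenvector of $A^{\otimes m}$ has eigenvalue of modulus $\|A\|^m$ exactly when it lies in $E^{\otimes m}$ — which is precisely the equivalence asserted in~(2).

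Finally, for the ``moreover'' clause I would induct downward on $m'$. Let $v = \sum_l d_l {\bf e}_l^{\otimes m}$ be an eigenvector of $A^{\otimes m}$ with $A^{\otimes m} v = \lambda v$, $|\lambda| = \|A\|^m$ (so $v \in E^{\otimes m}$ and $d \neq 0$). Writing $A = U({\rm diag}(a_1, \dots, a_k) \oplus A') U^*$, each row functional $u_j^* := {\bf e}_j^* U^*$ with $j \le k$ satisfies $u_j^* A = a_j u_j^*$; contracting the last tensor slot of $v$ against $u_j^*$ produces $v' = \sum_l d_l \overline{(u_j)_l}\, {\bf e}_l^{\otimes(m-1)} \in {\rm span}\{{\bf e}_l^{\otimes(m-1)}\} \cap E^{\otimes(m-1)}$, and the relation $u_j^* A = a_j u_j^*$ converts $A^{\otimes m} v = \lambda v$ into $A^{\otimes(m-1)} v' = (\lambda/a_j) v'$ with $|\lambda/a_j| = \|A\|^{m-1}$ (here $|a_j| = \|A\| > 0$). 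The main obstacle, as I see it, is to choose $j \le k$ with $v' \neq 0$, i.e.\ with $\operatorname{supp}(d) \cap \operatorname{supp}(u_j) \neq \emptyset$: if no such $j$ existed then, expanding $v = \sum_{\vec j} c_{\vec j}\, u_{j_1} \otimes \cdots \otimes u_{j_m}$ over $\vec j \in \{1, \dots, k\}^m$ and reading off the $(l, \dots, l)$-coordinate for any $l$ with $d_l \neq 0$, we would get $d_l = \sum_{\vec j} c_{\vec j} \prod_i (u_{j_i})_l = 0$, a contradiction. With such a $v'$, the core equivalence at level $m-1$ gives $w(A^{\circ(m-1)}) = w^{m-1}(A)$, and iterating down to $m' = 1$ (where the claim is trivial) finishes the proof. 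In summary, the Schur normal form and the Cauchy--Schwarz/tensor-decomposition bookkeeping are routine, while the genuinely delicate point is the non-vanishing of the contracted tensor $v'$.
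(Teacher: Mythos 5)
Your proof is correct, but it takes a genuinely different route from the paper in two of the three steps. For part~(1), after the Cauchy--Schwarz argument giving $r(A)=\|A\|$, you peel off the maximum-modulus eigenvalues from a Schur triangularization by a row-norm computation; this is essentially the classical Goldberg--Zwas argument, which only needs $r(A)=\|A\|$, whereas the paper builds the diagonalizing unitary inductively and invokes the numerical radius (via Lemma~\ref{lem-Ref}) at each stage -- the paper explicitly advertises its proof as differing from Goldberg--Zwas in this respect, so yours is closer to the classical one. For part~(2), your compression identity $A^{\circ m}=P^*A^{\otimes m}P$ together with the equality case of Cauchy--Schwarz is the same mechanism as the paper's Lemma~\ref{lem-Ref} (the paper instead locates $A^{\circ m}$ inside $A^{\otimes m}$ via Lemma~\ref{Lpos}); your explicit $A^{\otimes m}$-invariant splitting $E^{\otimes m}\oplus W$, with all eigenvalues on $W$ of modulus $<\|A\|^m$, makes the equivalence of the two formulations in~(2) more airtight than the paper's brief appeal to the list of maximum-modulus eigenvalues and eigenvectors of $A^{\otimes m}$. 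The largest divergence is the ``moreover'' clause: the paper settles it in three lines by viewing $A^{\circ m}$ as a principal submatrix of $A^{\circ(m-1)}\otimes A$ and sandwiching via~\eqref{p3} and Proposition~\ref{th10}(2), while you contract the maximal eigenvector of $A^{\otimes m}$ against a left eigenvector $u_j^*$ -- legitimate precisely because the decomposition $U^*AU=\mathrm{diag}(a_1,\dots,a_k)\oplus A'$ is an orthogonal block-diagonal one, so $u_j^*A=a_ju_j^*$ -- and rule out total vanishing of the contraction by reading off the $(l,\dots,l)$-coordinate, which is the delicate point you correctly identify and correctly handle. Your route is longer but yields extra structural information about the eigenvector, in the spirit of the proposition following Corollary~\ref{Cref}; the paper's route is shorter and sidesteps the nonvanishing issue altogether.
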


Before proceeding further, we cite predecessors in the literature to the
first part.
\begin{enumerate}
\item After we had shown Theorem~\ref{Tref} {(following the referee's
suggestions)} while this manuscript was under revision, Tao pointed out
to us that Goldberg--Zwas~\cite{GZ} had shown Theorem~\ref{Tref}(1) in
the 1970s! Their proof does not use the numerical radius, so that our
proof of Theorem~\ref{Tref}(1) is different from theirs. We also remark
for completeness that Goldberg--Zwas obtain a \textit{characterization}
of when $w(A) = \|A\|$ (they term such matrices ``radial''). We do not
proceed along these lines, as our main focus was Theorem~\ref{Tref}(2).

\item Four decades after Goldberg and Zwas, Gau--Wu also characterized
when $w(A) = \|A\|$ in \cite[Proposition~2.2]{Gau2018}: this happens if
and only if $A$ is unitarily similar to $[a] \oplus B$, with $\|B\| \leq
|a|$ and $w(A) = \|A\| = r(A) = |a|$. It is not immediately clear if this
implies Theorem~\ref{Tref}(1), since one would need to check if $w(B) =
\|B\| = |a|$ in order to proceed inductively.

\item For completeness we also point out the paper~\cite{GTZ} by
Goldberg--Tadmor--Zwas, in which the authors characterize all complex
square matrices whose spectral and numerical radii agree.
The authors termed such matrices ``spectral'', following
Halmos~\cite{Halmos}.
\end{enumerate}

Returning to the proof of Theorem~\ref{Tref}, we begin with two
preliminary lemmas.

\begin{lemma}\label{lem-Ref}
Fix integers $1 \leq K \leq N$ and a matrix $X \in
\mathcal{M}_N(\mathbb{C})$. Let $T$ be the leading $K\times K$ principal
submatrix of $X$. Then $w(T)=\|X\|$ if and only if $X$ has an eigenvector
$x=\begin{bmatrix} y\\ {\bf 0}_{N-K}\end{bmatrix}$ with corresponding
eigenvalue $\lambda$ satisfying: $|\lambda|=\|X\|$ (and hence $Ty =
\lambda y$ too).
\end{lemma}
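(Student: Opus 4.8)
The plan is to prove the two implications separately, after first recording the ``easy half'' of the equality chain. Since $T$ is the leading $K \times K$ principal submatrix of $X$, we may write $T = \Pi X \Pi^*$, where $\Pi : \mathbb{C}^N \to \mathbb{C}^K$ is the coordinate projection onto the first $K$ coordinates; consequently $\|T\| \leq \|X\|$, and since the numerical radius is at most the operator norm, $w(T) \leq \|T\| \leq \|X\|$ always. Thus, in either direction, the equality $w(T) = \|X\|$ forces all three of $w(T)$, $\|T\|$, $\|X\|$ to coincide.

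For the ``if'' direction, suppose $X$ has an eigenvector $x = \begin{bmatrix} y \\ {\bf 0}_{N-K} \end{bmatrix}$ with $Xx = \lambda x$ and $|\lambda| = \|X\|$. Writing $X$ in $2 \times 2$ block form with leading block $T$ and reading off the top block of the equation $Xx = \lambda x$ gives $Ty = \lambda y$; moreover $y \neq 0$ since $x \neq 0$. Hence $w(T) \geq |\langle Ty, y \rangle| / \|y\|^2 = |\lambda| = \|X\|$, and combined with $w(T) \leq \|X\|$ from the previous paragraph this yields $w(T) = \|X\|$.

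For the ``only if'' direction, assume $w(T) = \|X\|$. The numerical range of $T$ is compact (the image of the unit sphere of $\mathbb{C}^K$ under a continuous map), so there is a unit vector $y \in \mathbb{C}^K$ with $|\langle Ty, y \rangle| = w(T) = \|X\|$. Put $x := \begin{bmatrix} y \\ {\bf 0}_{N-K} \end{bmatrix} \in \mathbb{C}^N$, a unit vector; as its last $N - K$ coordinates vanish, $\langle Xx, x \rangle = \langle Ty, y \rangle$, so $|\langle Xx, x \rangle| = \|X\|$. It remains to invoke the standard fact that a unit vector $x$ with $|\langle Xx, x \rangle| = \|X\|$ must be an eigenvector of $X$ with eigenvalue $\lambda := \langle Xx, x \rangle$ (necessarily of modulus $\|X\|$): after replacing $X$ by $e^{-i\theta} X$ so that $\langle Xx, x \rangle = \|X\| \geq 0$, the chain $\|X\| = \langle Xx, x \rangle \leq \|Xx\| \leq \|X\|$ forces equality in the Cauchy--Schwarz step, hence $Xx = \|X\| x$. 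Reading off the top block of $Xx = \lambda x$ then gives $Ty = \lambda y$, finishing the proof.

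I do not anticipate a serious obstacle here: the content is essentially the classical characterization of vectors at which the numerical radius of a matrix attains the operator norm, transplanted to the block setting. The only points needing care are that the numerical radius of the finite matrix $T$ is actually attained (true by compactness of the numerical range) and the short Cauchy--Schwarz argument for the ``standard fact'' above, both of which I have indicated.
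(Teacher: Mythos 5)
Your proof is correct and follows essentially the same route as the paper's: zero-pad the optimal vector for $w(T)$, observe $\langle Xx,x\rangle=\langle Ty,y\rangle$, and force equality in Cauchy--Schwarz to conclude $x$ is a maximum-modulus eigenvector (the converse being the easy padding argument). The only cosmetic differences are that you bound $w(T)\leq\|T\|\leq\|X\|$ instead of $w(T)\leq w(X)\leq\|X\|$ and rotate by $e^{-i\theta}$ rather than working directly with the modulus.
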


Here, $T$ is assumed to be a leading principal submatrix to facilitate
stating and proving the result with simpler notation. However, our proof
of Theorem~\ref{Tref} will use a variant of this result where $T =
A^{\circ m}$ is a non-leading principal submatrix of $X = A^{\otimes m}$.
Such a variant can be easily deduced from Lemma~\ref{lem-Ref}.

\begin{proof}
If $X$ has an eigenvector $x$ as specified, and we rescale $y$ to be unit
length (hence so is $x$), then
\[
w(T) \geq |\langle Ty,y \rangle| = |\langle Xx,x \rangle| = |\lambda| =
\| X \|.
\]
Moreover, $\|X\| \geq w(X) \geq w(T)$ (the latter by padding by zeros).
Hence $w(T) = \| X \|$.

Conversely, suppose $w(T) = |\langle Ty,y\rangle|$ for some unit vector
$y \in \mathbb{C}^K$. With $x$ the zero-padding of $y$, it follows by
Cauchy--Schwarz that
\[
\|X\| = w(T) = |\langle Ty,y \rangle| = |\langle Xx,x \rangle| \leq
\|Xx\| \|x\| \leq \|X\|.
\]

Hence $x$ and $Xx$ are linearly dependent, so $x$ is an eigenvector of
$X$ (and hence, $y$ of $T$), and the corresponding eigenvalue $\lambda =
\langle Xx,x \rangle$ satisfies: $|\lambda| = \|X\|$.
\end{proof}

\begin{remark}
We stress -- from the above proof -- that if $w(T) = \|X\|$, then
\textit{any} unit vector $y \in \mathbb{C}^K$ with $w(T) = | \langle Ty,y
\rangle|$ is an eigenvector of $T$ (and its zero-padding $x$ of $X$),
with common eigenvalue $\lambda$ such that $|\lambda| = \|X\|$.
\end{remark}

In order to characterize when $w(A^{\circ m}) = w(A)^m = w(A^{\otimes
m})$ (by~\eqref{p3}), we need to work with $A^{\circ m}$ as a principal
submatrix of $A^{\otimes m}$. Thus, we quickly record the coordinates in
which $A^{\circ m}$ sits inside $A^{\otimes m}$. More generally, we have
(for any tuple of equidimensional matrices over any field):

\begin{lemma}\label{Lpos}
Given integers $p,q,m \geq 1$ and $p \times q$ matrices $A_1, \dots,
A_m$, their Schur product $A_1 \circ \cdots \circ A_m$ occurs as a
submatrix of their tensor product $\otimes_{i=1}^m A_i$, in the rows
numbered $\{ j(1 + p + p^2 + \cdots + p^{m-1}) + 1 : 0 \leq j < p \}$
and the columns
numbered $\{ j(1 + q + q^2 + \cdots + q^{m-1}) + 1 : 0 \leq j < q \}$.
\end{lemma}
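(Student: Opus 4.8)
The plan is to prove Lemma~\ref{Lpos} by a direct bookkeeping argument with the standard multi-index description of Kronecker products; an induction on $m$ works equally well, and I will record it as an alternative. First I would fix the convention: for $p\times q$ matrices $A_1,\dots,A_m$, the entry of $\bigotimes_{k=1}^m A_k$ in the row indexed by the tuple $(i_1,\dots,i_m)$ (each $i_k\in\{1,\dots,p\}$) and the column indexed by $(j_1,\dots,j_m)$ (each $j_k\in\{1,\dots,q\}$) equals $\prod_{k=1}^m (A_k)_{i_k j_k}$; moreover, under the block convention $A\otimes B=[a_{ij}B]$, the row tuple $(i_1,\dots,i_m)$ corresponds to the single row number $1+\sum_{k=1}^m (i_k-1)p^{m-k}$, and the column tuple $(j_1,\dots,j_m)$ to the column number $1+\sum_{k=1}^m (j_k-1)q^{m-k}$. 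This is exactly the mixed-radix expansion underlying iterated Kronecker products, and I would state it and use it without reproving it.

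Next I would observe that, by definition, the $(i,j)$ entry of the Schur product $A_1\circ\cdots\circ A_m$ is $\prod_{k=1}^m (A_k)_{ij}$, which is precisely the entry of $\bigotimes_{k=1}^m A_k$ at the ``repeated'' row tuple $(i,i,\dots,i)$ and column tuple $(j,j,\dots,j)$. Hence $A_1\circ\cdots\circ A_m$ is exactly the submatrix of $\bigotimes_{k=1}^m A_k$ sitting in the rows whose index tuple is constant and the columns whose index tuple is constant. It then remains to evaluate those linear indices: taking $i_1=\cdots=i_m=j+1$ with $0\le j<p$ gives row number $1+j\sum_{k=1}^m p^{m-k}=1+j(1+p+p^2+\cdots+p^{m-1})$, and likewise the constant column tuple $(j'+1,\dots,j'+1)$ with $0\le j'<q$ gives column number $1+j'(1+q+\cdots+q^{m-1})$. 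These are exactly the row and column sets in the statement; since $j\mapsto$ (row number) is increasing and matches the natural ordering of the rows of $A_1\circ\cdots\circ A_m$ (and similarly for columns), the identified submatrix equals $A_1\circ\cdots\circ A_m$ entrywise and in order, which completes the proof.

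There is essentially no obstacle here beyond keeping the Kronecker-product indexing convention consistent throughout; the only genuine content is recognizing the Schur product as the ``constant-index'' submatrix and recognizing the geometric sum $1+p+\cdots+p^{m-1}$ (resp.\ $1+q+\cdots+q^{m-1}$) as the common gap between consecutive selected rows (resp.\ columns). As an alternative that avoids the general multi-index formula, one can induct on $m$: the case $m=1$ is trivial (the whole matrix, rows $\{1,\dots,p\}$, gap $1$), and for the step one writes $\bigotimes_{k=1}^m A_k=\bigl(\bigotimes_{k=1}^{m-1}A_k\bigr)\otimes A_m=[C_{st}A_m]$ with $C=\bigotimes_{k=1}^{m-1}A_k$, and $A_1\circ\cdots\circ A_m=\bigl(A_1\circ\cdots\circ A_{m-1}\bigr)\circ A_m$; the inductive hypothesis places the $i$-th row of $A_1\circ\cdots\circ A_{m-1}$ at row $(i-1)g+1$ of $C$, where $g=1+p+\cdots+p^{m-2}$, and restricting the outer Kronecker product to within-block position matching the block index sends this to global row $((i-1)g+1-1)p+i=(i-1)(gp+1)+1$, so the gap scales from $g$ to $gp+1=p(1+p+\cdots+p^{m-2})+1=1+p+\cdots+p^{m-1}$, as claimed (and symmetrically for columns). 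The arithmetic of this index shift is the only thing to check.
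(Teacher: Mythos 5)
Your proof is correct, and it is exactly the argument the paper has in mind: the paper states Lemma~\ref{Lpos} without a formal proof, offering only the remark that the selected row/column numbers are one more than multiples of $(11\dots1)_p$ (resp.\ base $q$), i.e.\ the constant-digit positions in the mixed-radix indexing of the Kronecker product. Your multi-index bookkeeping (Schur product as the constant-index-tuple submatrix, with linear index $1+j(1+p+\cdots+p^{m-1})$) simply writes out that intended verification, and your inductive alternative is a valid variant of the same computation.
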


In other words, the row numbers are one more than non-negative integer
multiples (in base $p$) of $(11\dots1)_p$; and similarly for the column
numbers.

With these lemmas at hand, we have:

\begin{proof}[Proof of Theorem~\ref{Tref}]\hfill
\begin{enumerate}
\item If $w(A) = \|A\| =0$, then $A=0$ and the result is immediate. Else:
as $w(A) \geq |a_j| \ \forall j \leq n$, applying Lemma~\ref{lem-Ref}
with $T=X$ shows that $\| A \| = |a_1| = \cdots = |a_k|$. We now suppose
for some $0 \leq l < k$ that
\begin{equation}\label{Einduct}
U_l^* A U_l = {\rm diag}(a_1,
\dots, a_l) \oplus A'_l, \qquad \text{with } U_l \text{ unitary},
\end{equation}
and proceed inductively on $l$. (If $l=0$ then let $U_0 = {\rm
Id}_n$.) Write $U_l = [u_1 | \cdots | u_l | B']$; then one verifies that
$A u_j = a_j u_j$ for all $1 \leq j \leq l$. Since $l < k$, $A'$ has an
eigenvalue $a_{l+1}$ (with $|a_{l+1}| = \| A \|$) with a unit-length
eigenvector $v'$ -- and $U_l^* A U_l$ has an associated eigenvector
$\begin{bmatrix} {\bf 0}_l \\ v' \end{bmatrix}$. This eigenvector is
orthogonal to all ${\bf e}_j = U_l^* u_j$ (for $1 \leq j \leq l$), so
$u_{l+1} := U_l \begin{bmatrix} {\bf 0}_l \\ v' \end{bmatrix}$ is
orthogonal to $U_l {\bf e}_j = u_j$ for all $j \leq l$.

Now let $U_{l+1} = [u_1 | \cdots | u_{l+1} | B]$ be any unitary matrix
(so $a_j B^* u_j = B^* A u_j = 0\ \forall j \leq l+1$). An explicit
computation then reveals:
\[
U_{l+1}^* A U_{l+1} = \begin{bmatrix}
a_1 & \cdots & 0 & u_1^* A B \\
\vdots & \ddots & \vdots & \vdots\\
0 & \cdots & a_{l+1} & u_{l+1}^* A B \\
{\bf 0} & \cdots & {\bf 0} & B^* A B
\end{bmatrix}
\]

For $1 \leq j \leq l+1$, let $r_j \neq 0$ denote the $j$th row of this
matrix. Then
\[
|a_j| = \| U_{l+1}^* A U_{l+1} \| \geq \| v_j \| \geq |\langle v_j, {\bf
e}_j \rangle|, \qquad \text{where } v_j := (U_{l+1}^* A U_{l+1}) \cdot
\textstyle \frac{r_j^*}{\|r_j^*\|}.
\]

But $| \langle v_j, {\bf e}_j \rangle| = \| r_j^* \|
= \sqrt{|a_j|^2 + \| B^* A^* u_j \|^2}$, 
so $u_j^* AB = 0$ and we obtain~\eqref{Einduct} for $l+1$, as desired.
It follows by induction on $l$ that $A$ is unitarily similar to ${\rm
diag}(a_1, \dots, a_k) \oplus A'$. We are now done by the uniqueness of
the Jordan normal form.

Moreover, the eigenvalues of $A'$ are necessarily $a_{k+1}, \dots, a_n$,
and all of them are $< |a_1| = \| A \|$. Thus Lemma~\ref{lem-Ref}
(modified to work with the trailing principal submatrix) shows that
$w(A') < \| A \|$.

\item Let $U$ be as in the statement (it exists by part~(1)). First note
for the matrix $A^{\otimes m}$ that
\[
w(A^{\otimes m})=w^m(A)=\|A\|^m=\|A^{\otimes m}\|,
\]
e.g.\ by~\eqref{p3}. Moreover, its maximum-modulus eigenvalues are
precisely the $k$-fold products
\begin{equation}\label{Efamily}
a_{j_1} \cdots a_{j_m}, \quad 1\leq j_1,\ldots, j_m\leq k,
\end{equation}
and the associated unit-norm eigenvectors are $u_{j_1} \otimes \cdots
\otimes u_{j_m}$.

Note that $A^{\circ m}$ is a principal submatrix of $A^{\otimes m}$, with
row and columns corresponding to the positions of the nonzero entries
in ${\bf e}_j^{\otimes m}$ for $1 \leq j \leq n$ (this latter follows
from Lemma~\ref{Lpos}, wherein we set $p=n, q=1$).
Thus, a ``non-leading-yet-principal'' version of Lemma~\ref{lem-Ref}
implies that $w(A^{\circ m})=w^m(A) = \| A^{\otimes m}\|$ if and only if
$A^{\otimes m}$ has an eigenvector in ${\rm span}_{\mathbb{C}}
\{{\bf e}_j^{\otimes m} : j=1,\ldots,n \}$, with associated eigenvalue
$\lambda$ satisfying
$|\lambda|=\left\|A^{\otimes m} \right\| = |a_1|^m$. By choice of $k$ in
the hypotheses, $\lambda$ lies in the collection~\eqref{Efamily}.
\end{enumerate}

Finally, suppose $m > 1$ and $w(A) = \|A\|, w(A^{\circ m}) = w^m(A)$. By
downward induction, it suffices to show that $w(A^{\circ (m-1)}) =
w^{m-1}(A)$. Now since $A^{\circ m}$ is a principal submatrix of $A^{\circ (m-1)} \otimes A$, by padding by zeros and~\eqref{p3} we have: 
\[
w^m(A) = w(A^{\circ m}) \leq w(A^{\circ (m-1)} \otimes A) = w(A^{\circ
(m-1)}) w(A) \leq w^m(A),
\]
where the final inequality is by Proposition~\ref{th10}(2). Hence all
inequalities are equalities, and from the final step we get
$w(A^{\circ(m-1)}) = w^{m-1}(A)$.
\end{proof}

\begin{example}
Theorem~\ref{Tref}(1) shows that if $w(A) = \|A\|$, then we have the
decomposition $U^* A U = {\rm diag}(a_1,\dots,a_k) \oplus A'$, where all
eigenvalues of $A'$ are strictly less than $|a_1| = \cdots = |a_k|$. The
proof of this part also showed that $w(A') < w(A) = \|A\|$. It is natural
to ask if $\|A'\|<\|A\|$ or not. This turns out to be not always the
case; for instance, let $A = \begin{pmatrix} 1 & 0 & 0 \\ 0 & 0 & 1 \\ 0
& 0 & 0 \end{pmatrix}$. Then $w(A) = 1 = \|A\|$, and $A' = E_{12}$, so
$w(A') = 1/2$ and $\|A'\| = 1$ (and all eigenvalues of $A'$ are zero).
\qed
\end{example}

\begin{remark}
In the concluding section~\ref{Sspeculation}, we will list some questions
that naturally arise from Theorem~\ref{Tref}.
\end{remark}

We next reformulate the characterization in Theorem~\ref{Tref}(2) as
follows.

\begin{cor}\label{Cref}
Setting as in Theorem~\ref{Tref}, and let $U$ be as in part~(2) there.
Let $D := U^* A U = {\rm diag}(a_1, \dots, a_k) \oplus A'$, and let $V_k
:= {\rm span}_{\mathbb{C}}({\bf e}_1, \dots, {\bf e}_k) \subseteq
\mathbb{C}^n$.
Then $w(A^{\circ m}) = w^m(A)$ if and only if $D^{\otimes m}$ has an
eigenvector in $V_k^{\otimes m} \cap {\rm span}_{\mathbb{C}} \{ (U^*
{\bf e}_j)^{\otimes m} : 1 \leq j \leq n \}$, with eigenvalue $\lambda$
such that $|\lambda| = |a_1|^m$.
\end{cor}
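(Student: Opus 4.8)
The plan is to deduce Corollary~\ref{Cref} directly from Theorem~\ref{Tref}(2) by conjugating everything with the unitary $U$. The point is that unitary conjugation is a linear isomorphism $\mathbb{C}^n \to \mathbb{C}^n$ intertwining $A$ with $D = U^* A U$, and applying the $m$-th tensor power gives a unitary $U^{\otimes m}$ on $(\mathbb{C}^n)^{\otimes m}$ intertwining $A^{\otimes m}$ with $D^{\otimes m} = (U^*)^{\otimes m} A^{\otimes m} U^{\otimes m}$. So eigenvectors of $A^{\otimes m}$ with a given eigenvalue $\lambda$ correspond bijectively (via $x \mapsto (U^*)^{\otimes m} x$) to eigenvectors of $D^{\otimes m}$ with the same eigenvalue.

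First I would identify the two subspaces appearing in Theorem~\ref{Tref}(2) under this correspondence. Writing $U = [u_1 | \cdots | u_k | B]$, we have $u_j = U {\bf e}_j$ for $1 \le j \le k$, hence $(U^*)^{\otimes m}(u_{j_1} \otimes \cdots \otimes u_{j_m}) = {\bf e}_{j_1} \otimes \cdots \otimes {\bf e}_{j_m}$, so
\[
(U^*)^{\otimes m} \left( {\rm span}_{\mathbb{C}}\{ u_{j_1} \otimes \cdots \otimes u_{j_m} : 1 \le j_1,\dots,j_m \le k \} \right) = V_k^{\otimes m},
\]
where $V_k = {\rm span}_{\mathbb{C}}({\bf e}_1, \dots, {\bf e}_k)$. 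Similarly $(U^*)^{\otimes m}({\bf e}_j^{\otimes m}) = (U^* {\bf e}_j)^{\otimes m}$, so the second subspace ${\rm span}_{\mathbb{C}}\{ {\bf e}_j^{\otimes m} : 1 \le j \le n \}$ is carried to ${\rm span}_{\mathbb{C}}\{ (U^* {\bf e}_j)^{\otimes m} : 1 \le j \le n \}$. Since $(U^*)^{\otimes m}$ is a bijection, it maps the intersection of these two subspaces onto the intersection of their images, and it maps eigenvectors of $A^{\otimes m}$ (with eigenvalue $\lambda$) lying in that intersection to eigenvectors of $D^{\otimes m}$ (with the same eigenvalue $\lambda$) lying in $V_k^{\otimes m} \cap {\rm span}_{\mathbb{C}}\{ (U^* {\bf e}_j)^{\otimes m} : 1 \le j \le n \}$. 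Note also $|a_1|^m = \|A\|^m$ since $|a_1| = \|A\|$ by the listing of eigenvalues together with $w(A) = \|A\| = r(A)$.

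Putting these together: by Theorem~\ref{Tref}(2), $w(A^{\circ m}) = w^m(A)$ holds if and only if $A^{\otimes m}$ has an eigenvector in the stated intersection with eigenvalue of modulus $|a_1|^m$; by the bijection above this is equivalent to $D^{\otimes m}$ having an eigenvector in $V_k^{\otimes m} \cap {\rm span}_{\mathbb{C}}\{ (U^* {\bf e}_j)^{\otimes m} : 1 \le j \le n \}$ with eigenvalue $\lambda$ satisfying $|\lambda| = |a_1|^m$. This is precisely the asserted reformulation. There is essentially no obstacle here — the only thing to be a little careful about is that $(U^*)^{\otimes m}$ genuinely respects the decomposition ${\bf e}_{j_1} \otimes \cdots \otimes {\bf e}_{j_m}$ of the first spanning set and the products $(U^*{\bf e}_j)^{\otimes m}$ of the second, which is immediate from the definition of the tensor power of a linear map, and that eigenvalues are preserved under unitary conjugation, which is standard.
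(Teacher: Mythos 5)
Your proposal is correct and is essentially the paper's own argument: the paper likewise deduces the corollary from Theorem~\ref{Tref}(2) using the multiplicativity $(U^*)^{\otimes m} A^{\otimes m} U^{\otimes m} = D^{\otimes m}$ and the identity $U^* u_j = {\bf e}_j$, which is exactly your conjugation-by-$U^{\otimes m}$ dictionary between the two spanning sets, their intersection, and the eigenvectors with $|\lambda| = |a_1|^m$. You have merely written out in detail what the paper states ``at once.''
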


\begin{proof}
This follows from Theorem~\ref{Tref}(2) at once, using that
(a)~the Kronecker product is multiplicative, so that $(AB)^{\otimes m} =
A^{\otimes m} B^{\otimes m}$ for all compatible matrices (or vectors)
$A,B$ and integers $m \geq 1$;
and (b)~$U^* u_j = {\bf e}_j$ for all $n \times n$ unitary matrices $U^*$
and all $1 \leq j \leq n$.
\end{proof}

In Corollary~\ref{Cref}, we can also recover additional information about
the eigenvector of $D^{\otimes m}$.

\begin{prop}
Setting as in Corollary~\ref{Cref}.
If $w(A^{\circ m})=w^m(A)$,  then $\oplus_{j=1}^k y_j \oplus { \bf
0}_{(n-k)n^{m-1}} $ is a unit eigenvector of $D^{\otimes m}$,  with
eigenvalue $\lambda$ such that $|\lambda|=|a_1|^m$ and each $y_j=
\oplus_{l=1}^ky_{jl}\oplus {\bf 0}_{(n-k)n^{m-2}}$ is an eigenvector of
$D^{ \otimes m-1}$, with eigenvalue $\frac{1}{ \|y_j\|^2}\langle ( D^{
\otimes m-1}) y_j,y_j\rangle=e^{-i \theta_j}w^{m-1}(A)$ where $\theta_j=
\theta+ \text{arg}\,(a_j)$ for some $\theta \in \mathbb{R}$.
\end{prop}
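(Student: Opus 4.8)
The plan is to unwind the eigenvector produced by Corollary~\ref{Cref}; we may assume $A \neq 0$ and $m \geq 2$, the excluded cases being immediate. Since $w(A) = \|A\| = |a_1|$ (the last equality from the proof of Theorem~\ref{Tref}(1)), we have $w^m(A) = |a_1|^m$, and Corollary~\ref{Cref} furnishes an eigenvector which we normalize to a unit vector $x$ with $D^{\otimes m} x = \lambda x$, where $|\lambda| = |a_1|^m > 0$ and $x \in V_k^{\otimes m} \cap {\rm span}_{\mathbb{C}}\{(U^* {\bf e}_j)^{\otimes m} : 1 \le j \le n\}$. For this proposition I only need the membership $x \in V_k^{\otimes m}$, together with the eigenvalue equation and the explicit action of $D$ on $V_k$.

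First I would pin down the block structure. Identify $\mathbb{C}^{n^m} \cong \mathbb{C}^n \otimes \mathbb{C}^{n^{m-1}}$ by splitting off the first tensor leg, and let $P_k$ be the orthogonal projection of $\mathbb{C}^n$ onto $V_k = {\rm span}({\bf e}_1, \dots, {\bf e}_k)$, so that $P_k^{\otimes m} = P_k \otimes P_k^{\otimes(m-1)}$ is the orthogonal projection onto $V_k^{\otimes m}$. Writing $x = \sum_{j=1}^n {\bf e}_j \otimes z_j$ with $z_j \in \mathbb{C}^{n^{m-1}}$ and using $x = P_k^{\otimes m} x = \sum_{j=1}^k {\bf e}_j \otimes (P_k^{\otimes(m-1)} z_j)$, a comparison of blocks forces $z_j = 0$ for $j > k$ and $z_j \in V_k^{\otimes(m-1)}$ for $j \le k$. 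Setting $y_j := z_j$ gives $x = \oplus_{j=1}^k y_j \oplus {\bf 0}_{(n-k)n^{m-1}}$, and running the same argument once more on each $y_j \in V_k^{\otimes(m-1)} \subseteq V_k \otimes \mathbb{C}^{n^{m-2}}$ yields $y_j = \oplus_{l=1}^k y_{jl} \oplus {\bf 0}_{(n-k)n^{m-2}}$.

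Next I would read off the block eigenvalue equations. Since $D = {\rm diag}(a_1, \dots, a_k) \oplus A'$, we have $D {\bf e}_j = a_j {\bf e}_j$ for $1 \le j \le k$, so
\[
D^{\otimes m} x = (D \otimes D^{\otimes(m-1)}) \Big( \sum_{j=1}^k {\bf e}_j \otimes y_j \Big) = \sum_{j=1}^k a_j\, {\bf e}_j \otimes \big( D^{\otimes(m-1)} y_j \big).
\]
Equating this with $\lambda x = \sum_{j=1}^k \lambda\, {\bf e}_j \otimes y_j$ and comparing the $j$th block gives $a_j D^{\otimes(m-1)} y_j = \lambda y_j$. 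Since $\|x\| = 1$, at least one $y_j$ is nonzero; for every such $j$ we have $a_j \neq 0$ (as $|a_j| = |a_1| = \|A\| > 0$), hence $D^{\otimes(m-1)} y_j = (\lambda/a_j)\, y_j$, so $y_j$ is an eigenvector of $D^{\otimes(m-1)}$ with eigenvalue $\lambda/a_j$, and accordingly $\|y_j\|^{-2}\langle D^{\otimes(m-1)} y_j, y_j\rangle = \lambda/a_j$.

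Finally I would compute arguments. Put $\theta := -\arg\lambda$, which is well-defined as $|\lambda| = w^m(A) > 0$, so that $\lambda = e^{-i\theta} w^m(A)$, while $a_j = \|A\| e^{i\arg a_j}$ since $|a_j| = |a_1| = \|A\|$. Then $\lambda/a_j = e^{-i(\theta + \arg a_j)}\, w^m(A)/\|A\| = e^{-i\theta_j} w^{m-1}(A)$ with $\theta_j := \theta + \arg a_j$, and crucially $\theta$ does not depend on $j$ -- which is exactly the claimed identity. The one delicate point is that some of the blocks $y_j$ may vanish (this already happens for suitable diagonal $A$, e.g.\ $A = {\rm diag}(1,i)$ and $m = 2$), so the eigenvector claim and the Rayleigh-quotient formula are to be read for the nonzero blocks, of which there is always at least one; apart from this bookkeeping the argument is a routine unwinding of Corollary~\ref{Cref} and poses no genuine obstacle.
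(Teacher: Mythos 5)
Your proof is correct, but it takes a genuinely different route from the paper's. You start from the eigenvector already guaranteed by Corollary~\ref{Cref} and then argue purely structurally: membership in $V_k^{\otimes m}$ forces the block shape $x=\oplus_{j=1}^k y_j\oplus{\bf 0}$ (and, iterated once more, $y_j=\oplus_{l=1}^k y_{jl}\oplus{\bf 0}$), and comparing first-leg blocks in $D^{\otimes m}x=\lambda x$ gives $a_j D^{\otimes(m-1)}y_j=\lambda y_j$, so each nonzero $y_j$ is an eigenvector with eigenvalue $\lambda/a_j=e^{-i(\theta+\arg a_j)}w^{m-1}(A)$, $\theta=-\arg\lambda$. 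The paper instead starts from a unit vector attaining $w(A^{\circ m})$, transports it to $y=(U^*)^{\otimes m}x'$, and runs a chain of inequalities (triangle inequality, $w(D^{\otimes(m-1)})=w^{m-1}(A)$, the strict bound $w(A')<w(A)$, and Cauchy--Schwarz) whose forced equalities yield $y'=0$, the common phase relation among the Rayleigh quotients, and -- via the equality case of Cauchy--Schwarz -- the eigenvector claims. Your route is shorter and more transparent: it produces the eigenvalues $\lambda/a_j$ outright rather than extracting phases from an equality chain, and your explicit caveat that some blocks $y_j$ can vanish (e.g.\ $A=\mathrm{diag}(1,i)$, $m=2$) is a genuine point of precision that the paper's statement and proof gloss over, since the same phenomenon occurs for their construction. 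What the paper's variational argument buys in exchange is slightly more information: it shows that the structured eigenvector arises from \emph{every} unit vector attaining $w(A^{\circ m})$, i.e.\ the block structure is tied to the numerical-radius maximizers of the Schur power themselves, whereas your argument (quite legitimately, given the placement after Corollary~\ref{Cref}) only addresses the eigenvector furnished there -- indeed it applies to any maximum-modulus eigenvector of $D^{\otimes m}$ in $V_k^{\otimes m}$.
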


\begin{proof}
Throughout this proof, $x = (x_1, \dots, x_n)^T$ denotes a unit length
vector in $\mathbb{C}^n$. Choose $x$ such
that
$|\langle (A^{\circ m}) {x},
{x}\rangle|=w(A^{\circ m}).$ We have
    \begin{eqnarray*}
        w^m(A) &=&| \langle (A^{\circ m}) {x}, {x}\rangle |
        = |\langle ( A^{ \otimes m})x',x' \rangle | \quad (\textit{where  $x' = \sum_{j=1}^n x_j
\left(e_j^{ \otimes m}\right) \in \mathbb{C}^{n^m}$}) \\
        & =& |\langle (D^{ \otimes m}) ( {U^*}^{ \otimes m})x',( {U^*}^{ \otimes m})x' \rangle |  \\
        &=&|\langle (D^{ \otimes m})y,y \rangle |
        \quad (\textit{where } y= ({U^*}^{ \otimes m})x'=\oplus_{j=1}^k y_j \oplus { y'}_{(n-k)n^{m-1}} \in
\mathbb{C}^{n^m})\\
        &=&  |\langle \left( \oplus_{j=1}^k a_j (D^{ \otimes m-1}) \oplus (A'\otimes D^{ \otimes m-1}) \right) \oplus_{i=1}^ky_i\oplus y',\oplus_{i=1}^ky_i\oplus y' \rangle |\\
        &=& \left| \sum_{j=1}^ka_j\langle (D^{ \otimes m-1})y_j,y_j\rangle + \langle A'\otimes D^{ \otimes m-1}y',y'\rangle \right| \\
        &\leq & \left| \sum_{j=1}^ka_j\langle (D^{ \otimes m-1})y_j,y_j\rangle \right| +\left|\langle A'\otimes D^{ \otimes m-1}y',y'\rangle \right| \\
         &\leq &  \sum_{j=1}^k \left| a_j\langle (D^{ \otimes m-1})y_j,y_j\rangle \right| + \left|\langle A'\otimes D^{ \otimes m-1}y',y'\rangle \right| \\
         &\leq& \sum_{j=1}^k |a_j| w^{m-1}(A)\left\| y_j \right\|^2 + w(A' \otimes D^{ \otimes m-1})\left\| y' \right\|^2\\
         &\leq& \sum_{j=1}^k |a_j| w^{m-1}(A)\left\| y_j \right\|^2 + w(A') w^{m-1}(A)\left\| y' \right\|^2
         \quad (\text{since}\ w(D^{ \otimes m-1})=w^{m-1}(A)) \\
         &\leq& w^m(A)\left(\sum_{j=1}^k \left\| y_j \right\|^2+\|y'\|^2\right) \quad (\text{since}\ w(A')\leq w(A))\\
         &=& w^m(A) \|y\|^2= w^m(A)\|x'\|^2=w^m(A) \|{x}\|^2
         = w^m(A).
    \end{eqnarray*}
    Thus the above are all equalities. Since
    $w(A') < |a_1|=w(A),$  $y'=0$; and so $ 
         \left| \sum_{j=1}^ka_j\langle (D^{ \otimes m-1})y_j,y_j\rangle \right| 
         = \sum_{j=1}^k \left| a_j \right| \left|\langle (D^{ \otimes m-1})y_j,y_j\rangle \right| $ $ 
         = \sum_{j=1}^k |a_j| w^{m-1}(A)\left\| y_j \right\|^2.$ Hence
\[\langle (D^{ \otimes m-1}) y_j,y_j\rangle=e^{-i \theta_j}w^{m-1}(A)
\|y_j\|^2 \quad \text{ with $\theta_j= \theta+ \text{arg }(a_j)$ for some
$\theta \in \mathbb{R}.$}
\]
From the above, we also have $|\langle (D^{ \otimes m})y,y \rangle | =\|(D^{ \otimes m})y\|  \|y\|=|a_1|^m$ and  $|\langle (D^{ \otimes m-1})y_i,y_j \rangle | =\|(D^{ \otimes m-1})y_j\|  \|y_j\|= w^{m-1}(A)\|y_j\|^2.$ This concludes that
 $y$ and $y_j$ are eigenvectors of $D^{ \otimes m}$ and $D^{ \otimes m-1}$, respectively.  Again, using a similar approach we can show $y_j= \oplus_{l=1}^ky_{jl}\oplus {\bf 0}_{(n-k)n^{m-2}}.$
\end{proof}
}

\section{Numerical radius and $\ell_p$ operator norm \blue{inequalities}
for Kronecker products} \label{sec-lpnorm}

We now show Theorem~\ref{thm4-1} (and hence Corollary~\ref{Copnorm} for
doubly stochastic matrices), leading to refinements/extensions of results
of Bouthat, Khare, Mashreghi, and Morneau-Gu\'erin~\cite{Khare}.

\begin{proof}[Proof of Theorem~\ref{thm4-1}]
We begin by showing~\eqref{E15}. Let $p \in [1,\infty)$.
For $x=(x_1,x_2,\ldots,x_n)^T \in \mathcal{H}^n$,
\begin{eqnarray*}
   \| (A\otimes B)x\|_p^p = \sum_{i=1}^n \left\| \sum_{j=1}^na_{ij}Bx_j \right\|^p
    &\leq& \sum_{i=1}^n  \left( \sum_{j=1}^n|a_{ij}| \left\| Bx_j \right\|\right)^p
    \leq \left\| B \right\|^p \sum_{i=1}^n  \left( \sum_{j=1}^n|a_{ij}|  \|x_j\| \right)^p\\
     &\leq& \left\| B \right\|^p  \sum_{i=1}^n \left(\sum_{j=1}^n|a_{ij}|\right)^{p/q}    \left( \sum_{j=1}^n   |a_{ij}| \|x_j\|^p \right) \quad (\text{H\"older})\\
     &\leq& \left\| B \right\|^p \left( \max_{1 \leq i \leq n} \sum_{j=1}^n |a_{ij}| \right)^{p/q}  \sum_{i=1}^n     \left( \sum_{j=1}^n   |a_{ij}| \|x_j\|^p \right) \\
      &\leq& \left\| B \right\|^p \left( \max_{1 \leq i \leq n} \sum_{j=1}^n |a_{ij}| \right)^{p/q}  \sum_{j=1}^n     \left( \|x_j\|^p \sum_{i=1}^n   |a_{ij}|  \right) \\
      &\leq& \left\| B \right\|^p \left( \max_{1 \leq i \leq n} \sum_{j=1}^n |a_{ij}| \right)^{p/q} \left( \max_{1 \leq j \leq n} \sum_{i=1}^n |a_{ij}| \right)^{}   \sum_{j=1}^n      \|x_j\|^p\\
      &=& \left\| B \right\|^p \left( \max_{1 \leq i \leq n} \sum_{j=1}^n |a_{ij}| \right)^{p/q} \left( \max_{1 \leq j \leq n} \sum_{i=1}^n |a_{ij}| \right)^{}       \|x\|^p_p.
\end{eqnarray*}

Take the $p$th root, divide by $\| x \|_p$ (for $x \neq 0$), and take the
supremum over all $x \neq 0$, to obtain $\| A\otimes B\|_p \leq \left\| B
\right\| \left( \max_{1 \leq i \leq n} \sum_{j=1}^n |a_{ij}|
\right)^{1/q} \left( \max_{1 \leq j \leq n} \sum_{i=1}^n |a_{ij}|
\right)^{1/p} .$

\blue{
We now show $\| A\otimes B\|_p \geq \| B \| \min_{1 \leq i \leq n} \left|
\sum_{j=1}^n a_{ij} \right|.$ Since $B\in
\mathcal{B}(\mathcal{H}),$ there exists a sequence
$\{x_m\}_{m=1}^{\infty}$ in $\mathcal{H} \setminus \{ 0 \}$
with $\lim_{m \to \infty } \frac{\|Bx_m\|}{\|x_m\|}= \|B\|$. Letting
$z_m=(x_m,x_m,\ldots,x_m)^T \in \mathcal{H}^n$,
\begin{eqnarray*}
    \| A\otimes B\|_p^p \geq \lim_{m\to \infty} \frac{ \| (A\otimes B)z_m\|_p^p}{\|z_m\|_p^p}
     &=& \lim_{m\to \infty} \frac{\sum_{i=1}^n  \left( \left\| \sum_{j=1}^na_{ij}  Bx_m \right\|\right)^p}{\|z_m\|_p^p}\\
      &=& \lim_{m\to \infty} \frac{\sum_{i=1}^n  \left(  |\sum_{j=1}^na_{ij} | \left\| Bx_m \right\|\right)^p}{\|z_m\|_p^p}\\
    &\geq&  \min_{1 \leq i \leq n} \left| \sum_{j=1}^n a_{ij} \right|^p    \lim_{m\to \infty}   \frac{  \sum_{i=1}^n    \left\| Bx_m \right\|^p}{\|z_m\|_p^p}\\
	&=&\min_{1 \leq i \leq n} \left| \sum_{j=1}^n a_{ij} \right|^p
	\frac{  \|B\|^p \sum_{i=1}^n \left\| x_m
	\right\|^p}{\|z_m\|_p^p}
	= \|B\|^p \min_{1 \leq i \leq n} \left|
	\sum_{j=1}^n a_{ij} \right|^p.
\end{eqnarray*}
}
Hence, $\| A\otimes B\|_p \geq
\| B \| \min_{1 \leq i \leq n} \left| \sum_{j=1}^n a_{ij} \right|$.
Similar arguments help show these bounds for $p = \infty$.

Now we show~\eqref{E14}. Using~\eqref{p3}, we have $w(A) w(B) \leq
w(A\otimes B) \leq \|A\| w(B)$. To complete the proof we need to show
that
$\|A\| \leq \left( \max_{1 \leq i \leq n} \sum_{j=1}^n |a_{ij}|
\right)^{1/2} \left( \max_{1 \leq j \leq n} \sum_{i=1}^n |a_{ij}|
\right)^{1/2}$;
but this follows by taking $p=2$ and $B: \mathbb{C}\to \mathbb{C}$ to be
the identity operator in~\eqref{E15}.
\end{proof}

\subsection{Further (twofold) extensions}

The next result from \cite{Khare} that we improve involves a special
class of doubly stochastic matrices. Recall that a circulant matrix
$Circ(a_1,a_2,a_3\ldots,a_n )$ is
\[
Circ(a_1,a_2,a_3,\ldots,a_n)=\begin{bmatrix}
     a_{1}& a_{2} & a_3 & \ldots& a_{n} \\
 		a_{n} & a_1&  a_{2}& \ldots& a_{n-1} \\
   	a_{n-1} & a_n&  a_{1}& \ldots& a_{n-2} \\
 		\vdots&\vdots& \vdots& \ddots & \vdots\\
 		a_2& a_{3} & a_{4} & \ldots&  a_{1}
\end{bmatrix}.
\]
It was shown in \cite[Theorem 4.1]{Khare} that for scalars $a,b\in [0,
\infty)$,
 \begin{eqnarray}\label{--2}
     \|Circ(-a,b,b,\ldots,b)\|_2= \begin{cases}
         a+b & \text{ if $(n-2)b\leq 2a$},\\
          (n-1)b-a & \text{ if $(n-2)b\geq 2a$}.
     \end{cases}
 \end{eqnarray}

We now provide a twofold extension (as well as a simpler proof)
of~\eqref{--2}: the scalars $a,b$ need not be non-negative or even real;
and instead of just the matrix $A=Circ(-a,b,b,\ldots,b)$ we work with $A
\otimes B$ for arbitrary $\mathcal{H}$ and $B \in
\mathcal{B}(\mathcal{H})$. (Note below that $\| B \|_2 = \| B \|$ for all
$\mathcal{H}$, $B \in \mathcal{B}(\mathcal{H})$.)

\begin{theorem}\label{thm4-2}
Fix any $a,b \in \mathbb{C}$ and $B\in \mathcal{B}(\mathcal{H})$, and let
$A=Circ(-a,b,b,\ldots,b)$. Then:
\begin{enumerate}
    \item $\|A\otimes B\|_2= \max \{|a+b|, |(n-1)b-a| \} \|B\|.$

    \item  $w(A\otimes B)=  \max \{|a+b|, |(n-1)b-a| \} w(B).$
\end{enumerate}
    \end{theorem}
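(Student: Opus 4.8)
The matrix $A = Circ(-a,b,b,\dots,b)$ is a circulant, hence is diagonalized by the (unitary) Fourier matrix $F = \tfrac{1}{\sqrt n}[\omega^{jk}]_{j,k=0}^{n-1}$ with $\omega = e^{2\pi i/n}$, and its eigenvalues are $\lambda_0 = -a + (n-1)b$ and $\lambda_j = -a - b\,(1 + \omega^j + \cdots + \omega^{(n-1)j - j})$ — more simply, since $1 + \omega^j + \cdots + \omega^{(n-1)j} = 0$ for $1 \le j \le n-1$, one has $\lambda_j = -a - (\text{sum of all }n\text{-th roots except }1)\cdot b$... let me state it cleanly: $\lambda_0 = (n-1)b - a$ and $\lambda_j = -a - b(\omega^j + \omega^{2j} + \cdots + \omega^{(n-1)j}) = -a - b(-1) = b - a$? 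No: $A = -a\,I + b(J - I) + $ (correction) — actually $Circ(-a,b,\dots,b) = bJ - (a+b)I$ where $J$ is the all-ones circulant. So the eigenvalues are exactly $b n - (a+b) = (n-1)b - a$ (on the all-ones vector) and $-(a+b)$ (with multiplicity $n-1$, on the orthogonal complement). Thus $A = F D F^*$ with $D = \mathrm{diag}\big((n-1)b - a,\, -(a+b),\, \dots,\, -(a+b)\big)$, which is \emph{normal}.

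The key observation is then that $A \otimes B = (F \otimes I)(D \otimes B)(F^* \otimes I)$, and since $F \otimes I$ is unitary, both the $\ell_2$ operator norm and the numerical radius of $A \otimes B$ equal those of $D \otimes B$. Now $D \otimes B$ is the direct sum $\big((n-1)b - a)\,B\big) \oplus \big(-(a+b)B\big)^{\oplus (n-1)}$ after a permutation of the coordinates of $\mathcal{H}^n$ that groups the $B$-blocks. For a direct sum, $\|X \oplus Y\| = \max\{\|X\|, \|Y\|\}$ and $w(X \oplus Y) = \max\{w(X), w(Y)\}$ (the latter is recalled in the excerpt, in the alternate proof of Proposition~\ref{P2x2}). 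Hence
\[
\|A \otimes B\|_2 = \max\{ |(n-1)b - a|\,\|B\|,\ |a+b|\,\|B\| \} = \max\{|a+b|,\ |(n-1)b - a|\}\,\|B\|,
\]
using $\|\mu B\| = |\mu|\,\|B\|$, and identically with $w$ in place of $\|\cdot\|_2$. This proves both (1) and (2) simultaneously.

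The only point requiring a little care is the bookkeeping: that conjugating $A \otimes B$ by $F \otimes I$ and then permuting coordinates genuinely turns it into the asserted direct sum of scalar multiples of $B$ acting on copies of $\mathcal{H}$. This is routine — $D \otimes B = \bigoplus_{j} d_j B$ where $d_0 = (n-1)b - a$ and $d_1 = \cdots = d_{n-1} = -(a+b)$, and direct sums commute with taking $\|\cdot\|_2$ and $w(\cdot)$ as maxima — so I anticipate no real obstacle. One should also note that $\|B\|_2 = \|B\|$ for every Hilbert space $\mathcal{H}$ and $B \in \mathcal{B}(\mathcal{H})$, so the $p = 2$ operator norm in part (1) is just the usual operator norm of $A \otimes B$ on $\mathcal{H}^n$. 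Finally, specializing to $\mathcal{H} = \mathbb{C}$, $B = (1)$ recovers~\eqref{--2} (after checking that $|a+b| \le |(n-1)b - a|$ iff $(n-2)b \ge 2a$ when $a,b \ge 0$), giving the promised simpler proof of the result from~\cite{Khare}.
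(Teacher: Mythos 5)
Your proof is correct, and it takes a somewhat different route than the paper's. Both arguments rest on the same observation that $A = b\,{\bf 1}_{n\times n} - (a+b)I_n$ is normal with eigenvalues $(n-1)b-a$ (simple) and $-(a+b)$ (multiplicity $n-1$), but the paper concludes abstractly: normality gives $r(A)=w(A)=\|A\|=\max\{|a+b|,|(n-1)b-a|\}$, and then it invokes the general identity $\|A\otimes B\|_2=\|A\|\,\|B\|$ together with the remark following~\eqref{p3} that $w(A)=\|A\|$ forces $w(A\otimes B)=w(A)\,w(B)$. You instead diagonalize $A$ explicitly (via the Fourier matrix, though any unitary diagonalization of $b\,{\bf 1}_{n\times n}-(a+b)I_n$ would serve), conjugate $A\otimes B$ by the unitary $F\otimes I$, and read off both quantities from the block-diagonal operator $\bigoplus_j d_j B$ using $\|X\oplus Y\|=\max\{\|X\|,\|Y\|\}$ and $w(X\oplus Y)=\max\{w(X),w(Y)\}$. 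Your version is more self-contained -- it bypasses Holbrook's inequality/\eqref{p3} and the tensor-norm identity, handling (1) and (2) by one mechanism -- at the cost of the coordinate bookkeeping; the paper's version is shorter because those general facts are already established earlier in the text. The only cosmetic issue is the false start in your eigenvalue computation before you settle on $A=b\,{\bf 1}_{n\times n}-(a+b)I_n$; the eigenvalues you end with are the correct ones, so there is no mathematical gap.
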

    
In particular, setting $B: \mathbb{C}\to \mathbb{C}$ as the
identity, we get: $\|Circ(-a,b,b,\ldots,b)\|_2\ \forall a,b\in
\mathbb{C}.$

\begin{proof}
Here $A=-(a+b) I_n + b {\bf 1}_{n\times n}$ is normal, with $I_n, {\bf
1}_{n \times n} \in \mathcal{M}_n(\mathbb{C})$ the identity and all-ones
matrix, respectively. Hence $\max \{|a+b|, |(n-1)b-a|
\}=r(A)=w(A)=\|A\|$. Thus $\|A\otimes B\|_2 = \|A\| \|B\|$
and $w(A\otimes B)= w(A)w(B)$ are as claimed.
\end{proof}

Our second twofold extension is of a result in \cite{Khare} that computes
$\| A \otimes B \|_2$. We extend from real to complex matrices, and also
using any $\mathcal{H}$ and $B \in \mathcal{B}(\mathcal{H})$ (vis-a-vis
$\mathcal{H} = \mathbb{C}$ and $B = (1)$ in \cite{Khare}).

\begin{prop}\label{thm4-3}
Let $A=[c_1|c_2|\ldots|c_n]\in \mathcal{M}_n(\mathbb{C})$, whose columns
satisfy $c_i^*c_i=\alpha$ and $c_i^*c_j=\beta$ (for all $i<j$) for some
real scalars $\alpha, \beta$, and let $B\in \mathcal{B}(\mathcal{H}).$
Then
\[
\| A\otimes B\|_2=\max \left\{\sqrt{|\alpha-\beta|},
\sqrt{|\alpha+(n-1)\beta|} \right\} \|B\|.
\]
\end{prop}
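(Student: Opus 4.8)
The plan is to reduce Proposition~\ref{thm4-3} to the computation $\|A \otimes B\|_2 = \|A\|\,\|B\|$, and then to compute $\|A\|$ directly from the Gram-matrix hypothesis. The first reduction is the easy half: since $\|\cdot\|_2$ is the usual operator (spectral) norm on a Hilbert space, and the spectral norm of a Kronecker product is multiplicative, $\|A \otimes B\|_2 = \|A\|\,\|B\|$ for any $A \in \mathcal{M}_n(\mathbb{C})$ and $B \in \mathcal{B}(\mathcal{H})$; alternatively this is the $p=2$ case already implicit in the circulant argument of Theorem~\ref{thm4-2}, or it follows since $(A\otimes B)^*(A\otimes B) = A^*A \otimes B^*B$ has norm $\|A^*A\|\,\|B^*B\| = \|A\|^2\|B\|^2$. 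So the whole content is the scalar identity $\|A\| = \max\{\sqrt{|\alpha-\beta|},\ \sqrt{|\alpha+(n-1)\beta|}\}$.

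For that, the key observation is that the hypotheses $c_i^*c_i = \alpha$ and $c_i^*c_j = \beta$ for $i \neq j$ say precisely that the Gram matrix $A^*A$ equals $(\alpha - \beta) I_n + \beta\, {\bf 1}_{n\times n}$, where ${\bf 1}_{n\times n}$ is the all-ones matrix. (Here I would note that $\beta$ real is what makes $A^*A$ Hermitian as it must be, consistent with the stated hypothesis.) Now $(\alpha-\beta)I_n + \beta\,{\bf 1}_{n\times n}$ is a normal — indeed Hermitian — matrix whose eigenvalues are immediate: ${\bf 1}_{n\times n}$ has eigenvalue $n$ on the all-ones vector and eigenvalue $0$ on its orthogonal complement, so $A^*A$ has eigenvalues $\alpha + (n-1)\beta$ (once) and $\alpha - \beta$ (with multiplicity $n-1$). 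Since $A^*A$ is positive semidefinite these are $\geq 0$, so in fact $|\alpha - \beta| = \alpha-\beta$ and $|\alpha+(n-1)\beta| = \alpha+(n-1)\beta$, but writing absolute values is harmless and matches the statement. Hence
\[
\|A\|^2 = \|A^*A\| = r(A^*A) = \max\{\alpha - \beta,\ \alpha + (n-1)\beta\} = \max\{|\alpha-\beta|,\ |\alpha+(n-1)\beta|\},
\]
and taking square roots gives $\|A\| = \max\{\sqrt{|\alpha-\beta|},\sqrt{|\alpha+(n-1)\beta|}\}$.

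Combining the two steps, $\|A\otimes B\|_2 = \|A\|\,\|B\| = \max\{\sqrt{|\alpha-\beta|},\sqrt{|\alpha+(n-1)\beta|}\}\,\|B\|$, as claimed. I do not anticipate a genuine obstacle here: the only mild point to be careful about is the multiplicativity of the spectral norm under Kronecker products when $B$ acts on an infinite-dimensional $\mathcal{H}$ (so one should phrase it via $(A\otimes B)^*(A\otimes B) = A^*A \otimes B^*B$ and the fact that for a finite operator matrix $M \otimes N$ with $M$ a scalar matrix one has $\|M\otimes N\| = \|M\|\,\|N\|$, which is standard), and the bookkeeping that $\beta$ being real ensures $A^*A$ is Hermitian with the stated real eigenvalues. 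Everything else is a direct eigenvalue computation for a rank-one perturbation of a multiple of the identity.
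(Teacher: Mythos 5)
Your proposal is correct and follows essentially the same route as the paper: reduce via $\|A\otimes B\|_2=\|A\|\,\|B\|$ to the scalar case, identify $A^*A=(\alpha-\beta)I_n+\beta\,{\bf 1}_{n\times n}$, and read off $\|A\|=r^{1/2}(A^*A)$. Your extra remarks (that $\beta$ real makes $A^*A$ Hermitian, and that positive semidefiniteness renders the absolute values redundant) are fine but do not change the argument.
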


\begin{proof}
As $\|A\otimes B\|_2=\|A\| \|B\|$, it suffices to show the result for
$\mathcal{H} = \mathbb{C}$ and $B = (1)$. Clearly, $A^*A = Circ( \alpha,
\beta, \beta, \ldots, \beta) =(\alpha-\beta) I_n+\beta {\bf 1}_{n\times
n}$.  Therefore, $\|A\|=\|A^*A\|^{1/2}=r^{1/2}(A^*A)=\max
\left\{\sqrt{|\alpha-\beta|}, \sqrt{|\alpha+(n-1)\beta|} \right\}.$
\end{proof}

If $A=Circ(a_1,a_2,a_3)$ with all $a_j$ real, $A$ satisfies the
conditions of Proposition~\ref{thm4-3}, and so:

\begin{cor}\label{corr2}
Let  $A=Circ(a_1,a_2,a_3)\in \mathcal{M}_3(\mathbb{C})$, where
$a_1,a_2,a_3\in \mathbb{R}$. If $B\in \mathcal{B}(\mathcal{H}),$ then
\[
\|A\otimes B\|_2=\max \left\{{\left|a_1+a_2+a_3\right|}, \sqrt{ \left|
a_1^2+a_2^2+a_3^2-  (a_1a_2+a_2a_3+a_1a_3)\right| } \right\} \|B\|.
\]
\end{cor}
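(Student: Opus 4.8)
The plan is to specialize Proposition~\ref{thm4-3} to the case $n = 3$ and $A = Circ(a_1, a_2, a_3)$ with $a_1, a_2, a_3 \in \mathbb{R}$. First I would verify that such a circulant matrix satisfies the column-inner-product hypothesis of Proposition~\ref{thm4-3}: since each column of $Circ(a_1,a_2,a_3)$ is a cyclic permutation of the vector $(a_1, a_2, a_3)^T$ (more precisely, the columns are $(a_1,a_3,a_2)^T$, $(a_2,a_1,a_3)^T$, $(a_3,a_2,a_1)^T$), every column has the same squared norm $\alpha = a_1^2 + a_2^2 + a_3^2$, and any two distinct columns have the same inner product $\beta = a_1a_2 + a_2a_3 + a_1a_3$ (this uses that the $a_j$ are real, so no conjugation issues arise, and that the pairwise products of cyclically shifted triples always give the same symmetric sum). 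Thus the hypotheses of Proposition~\ref{thm4-3} hold with these values of $\alpha$ and $\beta$.

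Next I would simply substitute into the conclusion of Proposition~\ref{thm4-3}. With $n = 3$ we get
\[
\|A \otimes B\|_2 = \max\left\{ \sqrt{|\alpha - \beta|},\ \sqrt{|\alpha + 2\beta|} \right\} \|B\|.
\]
Now $\alpha + 2\beta = a_1^2 + a_2^2 + a_3^2 + 2(a_1a_2 + a_2a_3 + a_1a_3) = (a_1 + a_2 + a_3)^2$, so $\sqrt{|\alpha + 2\beta|} = |a_1 + a_2 + a_3|$; and $\alpha - \beta = a_1^2 + a_2^2 + a_3^2 - (a_1a_2 + a_2a_3 + a_1a_3)$, which matches the expression under the second square root in the statement. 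Combining these gives exactly the claimed formula.

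There is really no substantive obstacle here; the only thing requiring minor care is the algebraic identity $\alpha + 2\beta = (a_1+a_2+a_3)^2$ and confirming that the circulant structure genuinely yields constant diagonal and off-diagonal Gram entries — but both are routine. (Alternatively, one could bypass Proposition~\ref{thm4-3} entirely and argue directly: $A = Circ(a_1,a_2,a_3)$ is normal, so $\|A\| = r(A) = \max_j |\lambda_j|$ where the eigenvalues are $\lambda_j = a_1 + a_2 \omega^j + a_3 \omega^{2j}$ with $\omega = e^{2\pi i/3}$; since the $a_k$ are real one has $\lambda_0 = a_1+a_2+a_3$ and $|\lambda_1| = |\lambda_2|$ with $|\lambda_1|^2 = a_1^2+a_2^2+a_3^2 - (a_1a_2+a_2a_3+a_1a_3)$, and then $\|A \otimes B\|_2 = \|A\|\,\|B\|$ finishes it. But invoking Proposition~\ref{thm4-3} is the cleanest route given what precedes.)
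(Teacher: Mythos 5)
Your proposal is correct and follows essentially the same route as the paper: the paper also deduces the corollary by checking that a real $3\times 3$ circulant satisfies the Gram hypotheses of Proposition~\ref{thm4-3} with $\alpha=a_1^2+a_2^2+a_3^2$ and $\beta=a_1a_2+a_2a_3+a_1a_3$, and then substituting (with $\alpha+2\beta=(a_1+a_2+a_3)^2$). Your alternative via normality and the circulant eigenvalues is also fine, but it is not needed.
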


Our third and final (threefold) extension, is of bounds in~\cite{Khare}
for $\| A \otimes B \|_p$ where $p \neq 2$. It is shown in
\cite[Theorem~5.1 and Section~6]{Khare} that if $a,b \in [0,\infty)$ and
$n \geq 2$, then
\begin{equation}\label{--3}
     \max \{a+b, |(n-1)b-a| \} 
\leq \|Circ(-a,b,b,\ldots,b)\|_p \leq a + b + nb \kappa, 
     \qquad \forall p \in [1,\infty],
\end{equation}
where (as we show below) $\kappa \in (1,\infty)$. We extend this in our
next result, with
(i)~$a,b \in \mathbb{C}$ allowed to be arbitrary;
(ii)~the upper bound replaced by a quantity that is at most $|a + b| +
n|b|$ (so $\kappa$ is also replaced, by $1$); and
(iii)~$A = Circ(-a,b,b,\dots,b)$ replaced by $A \otimes B$ for arbitrary
$B \in \mathcal{B}(\mathcal{H})$.
 
\begin{theorem}\label{Tfinal}
Let $a,b \in \mathbb{C}$, $n \geq 2$, $A=Circ(-a,b,b,\ldots,b) \in
\mathcal{M}_n(\mathbb{C})$, and $B\in \mathcal{B}(\mathcal{H}).$ Then
\begin{equation}
\max \big\{|a+b|, |(n-1)b-a| \big\}\|B\| \leq \|A\otimes B\|_p \leq \min
\big\{|a+b|+ n|b|, \, |a|+(n-1)|b| \big\} \|B\|, \ \forall p \in [1,\infty].
\end{equation}
\end{theorem}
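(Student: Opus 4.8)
The strategy is to establish the two inequalities separately, exploiting the decomposition $A = -(a+b)I_n + b\,{\bf 1}_{n\times n}$ together with the general bounds from Theorem~\ref{thm4-1}. For the lower bound, I would invoke~\eqref{E15}: the row sums of $A$ are all equal to $-a + (n-1)b$, so $\min_i |\sum_j a_{ij}| = |(n-1)b - a|$, giving $\|A\otimes B\|_p \geq |(n-1)b-a|\,\|B\|$. To also get $|a+b|$ into the lower bound, I would test $A\otimes B$ on vectors of the form $z_m = (x_m, -x_m, 0, \ldots, 0)^T$ (with $\|Bx_m\|/\|x_m\| \to \|B\|$), much as in the proof of~\eqref{E15}: applying $A\otimes B$ to such a vector, the ``$-(a+b)I_n$'' part contributes $-(a+b)$ times $(x_m, -x_m, 0, \ldots)$ while the ``$b\,{\bf 1}$'' part annihilates it (the entries cancel in pairs), so the output is $-(a+b)(Bx_m, -Bx_m, 0, \ldots)^T$ and the Rayleigh-type quotient tends to $|a+b|\,\|B\|$. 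Taking the max of the two test outcomes yields the claimed lower bound.

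For the upper bound I would again split $A\otimes B = -(a+b)(I_n \otimes B) + b({\bf 1}_{n\times n}\otimes B)$ and use the triangle inequality for the $\ell_p$ operator norm. The point is that $\|I_n \otimes B\|_p = \|B\|$ for every $p$ (the identity block acts diagonally, and an easy estimate shows $\|(I_n\otimes B)x\|_p \leq \|B\|\,\|x\|_p$ with equality approached), while $\|{\bf 1}_{n\times n}\otimes B\|_p$ needs a bound uniform in $p$: here I would apply~\eqref{E15} to $A' = {\bf 1}_{n\times n}$, whose row and column sums are all $n$, to get $\|{\bf 1}_{n\times n}\otimes B\|_p \leq n^{1/q}\cdot n^{1/p}\|B\| = n\|B\|$ for all $p$. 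Combining, $\|A\otimes B\|_p \leq (|a+b| + n|b|)\|B\|$. For the second term $|a| + (n-1)|b|$ in the minimum, I would instead bound $\|A\otimes B\|_p$ directly from~\eqref{E15} applied to $A$ itself: each row of $A$ has one entry of modulus $|a|$ (on the diagonal) and $n-1$ entries of modulus $|b|$, so $\max_i \sum_j |a_{ij}| = \max_j \sum_i |a_{ij}| = |a| + (n-1)|b|$, whence $\|A\otimes B\|_p \leq (|a|+(n-1)|b|)^{1/q}(|a|+(n-1)|b|)^{1/p}\|B\| = (|a|+(n-1)|b|)\|B\|$.

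The only mild subtlety — and the place I would be most careful — is the endpoint $p = \infty$, where the formula $\frac1p + \frac1q = 1$ degenerates; one should check that the $\ell_\infty$ operator norm of $A\otimes B$ is governed by the maximum absolute row sum (times $\|B\|$), which again gives $|a|+(n-1)|b|$ for the upper bound and $|(n-1)b-a|$ for the lower bound, and that the $z_m = (x_m,-x_m,0,\ldots)$ test still works in $\ell_\infty$. All of this is already handled in the proof of Theorem~\ref{thm4-1} (which explicitly notes ``similar arguments help show these bounds for $p = \infty$''), so in the write-up I would simply cite~\eqref{E15} and the lower-bound argument of Theorem~\ref{thm4-1}, and assemble the four estimates. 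No genuinely hard step is expected; the work is entirely in packaging the pieces correctly.
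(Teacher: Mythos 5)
Your proposal is correct and follows essentially the same route as the paper: the same decomposition $A = -(a+b)I_n + b\,{\bf 1}_{n\times n}$ with the triangle inequality and \eqref{E15} for the two upper bounds, and test vectors (equivalently, the lower bound in \eqref{E15}) for the two lower bounds. One small point in your favor: your alternating vector $(x_m,-x_m,0,\ldots,0)^T$ is the right choice for extracting $|a+b|$ --- the paper's stated $z_m'=(x_m,x_m,0,\ldots,0)^T$ does not yield that limit and is evidently a sign typo for your vector.
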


Before proving the result, we explain the quantity $\kappa$ and why it is
$> 1$. The lower bound in~\eqref{--3} easily follows from the fact that
$\|A\|_p\geq r(A)$ for all $A\in \mathcal{M}_n(\mathbb{C})$ and $p \in
[1, \infty]$. 
The upper bound in~\eqref{--3} was shown in~\cite{Khare} using harmonic
analysis; we now provide a few details. Consider the matrix $K = {\bf
1}_{n\times n}$ as an operator on $\mathcal{P}_{n-1},$ the space of all
polynomials of degree at most $n-1.$ Explicitly, for each
$p(z)=a_0+a_1z+\cdots+a_{n-1}z^{n-1}\in \mathcal{P}_{n-1}$,
$(Kp)(z):=(a_0+a_1+\cdots+a_{n-1})\phi (z),$ where
$\phi(z)=1+z+z^2+\cdots+z^{n-1}.$
With the above, it was shown in \cite[Section 6]{Khare} that 
\[
\|Circ(-a,b,b,\ldots,b)\|_p\leq a+b+nb \| \phi\|_{L^1(\mathbb{T})},
\qquad \forall a,b \in [0,\infty),
\]
where $\kappa = \| \phi\|_{L^1(\mathbb{T})}$ is the $L^1$-norm on the
circle $\mathbb{T}$, with respect to the normalized Haar measure.  

Having defined $\kappa$, its $L^1$-norm is easily estimated:
\[
1 = \bigg|\int_{0}^{2\pi}\phi(e^{i\theta})\frac{d\theta}{2\pi}\bigg| \leq
\int_{0}^{2\pi}|\phi(e^{i\theta})|\frac{d\theta}{2\pi} = \|
\phi\|_{L^1(\mathbb{T})} = \kappa. 
\]
In fact this inequality is strict ($\kappa > 1$), because equality occurs
if and only if $\phi$ is a positive-valued -- in particular,
$\mathbb{R}$-valued -- function on the circle, which is clearly false
(e.g.\ since $\phi$ is a non-constant entire function on the complex
plane).

We now negatively answer a question asked in \cite{Khare}:
\textit{For $a,b \in [0,\infty)$, does the estimation $\|A\|_p\leq a+b+nb
\| \phi\|_{L^1(\mathbb{T})}$ lead to a precise formula for $\|A\|_p$?}
While Theorem~\ref{Tfinal} provides a negative answer via a
strict improvement, we show a strict improvement using even simpler
means. Namely, one can easily find a better estimation directly from the
decomposition $A=-(a+b) I_n + b {\bf 1}_{n\times n}$:
\[
\|A\|_p\leq (a+b) +b \|{\bf 1}_{n\times n}\|_p= a+b + nb < a + b + n b \kappa,
\]
which holds because $\|{\bf 1}_{n\times n}\|_p=n$ (since ${\bf
1}_{n\times n}=Circ(1,1,\ldots,1)$).

Finally, we end this section with

\begin{proof}[Proof of Theorem~\ref{Tfinal}]
Since $A\otimes B=-(a+b) I_n \otimes B+b {\bf 1}_{n \times n}\otimes B$,
we get 
$\|A\otimes B\|_p \leq \|(a+b) I_n \otimes B\|_p+ \|b {\bf 1}_{n \times
n}\otimes B\|_p$. The upper bound now follows by using
Theorem~\ref{thm4-1} (twice):
\[
\|A\otimes B\|_p \leq (|a+b|  + n|b|) \| B\|, \qquad
\|A\otimes B\|_p \leq (|a| + (n-1)|b|) \| B\|.
\]
\blue{To show the lower bound, take a nonzero sequence of vectors $\{
x_m\}_{m=1}^{\infty} \subseteq \mathcal{H}$ such that $\|Bx_m\|\to \|B\|
\|x_m\|.$ Let $z_m=(x_m,x_m,\ldots,x_m)^T, z_m'=(x_m,x_m,0,\ldots,0)^T
\in \mathcal{H}^n$. Then the lower bound follows:
\begin{align*}
\|A\otimes B\|_p \geq &\ \lim_{m\to \infty} \frac{\| (A\otimes B)z_m
\|_p}{\|z_m\|_p} = |(n-1)b-a| \|B\|,\\
\|A\otimes B\|_p \geq &\ \lim_{m\to \infty} \frac{\| (A\otimes B)z_m'
\|_p}{\|z_m'\|_p} = |a+b| \, \|B\|. \qedhere
\end{align*}}
\end{proof}


\section{Estimations for the roots of a polynomial}\label{roots}

As a final application of the numerical radius formula for circulant
matrices, we develop a new estimate for the roots of an arbitrary
complex polynomial. Consider a monic polynomial
\[
p(z)=z^n+a_{n-1}z^{n-1}+\cdots+a_1z+a_0, \qquad n \geq 2,
\]
where $a_0, a_1, \ldots, a_{n-1} \in \mathbb{C}$.
The Frobenius companion matrix $C(p)$ of $p(z)$ is given by
\[
C(p)=\begin{bmatrix}
-a_{n-1} \ \ -a_{n-2}\ \ \ldots \ \ -a_1 & -a_0 \\
I_{n-1} & {\bf 0}_{(n-1) \times 1}
\end{bmatrix}.
\]

It is well known  (see \cite[pp.~316]{Horn2}) that all eigenvalues of
$C(p)$ are exactly the roots of the polynomial $p(z)$. By this argument
and using the numerical radius inequality for $C(p)$, Fujii and Kubo
\cite{Fujii} proved that if $\lambda$ is a root of the polynomial $p(z),$
then
\begin{equation}\label{fuji}
|\lambda| \leq \cos \frac{\pi}{n+1} + \frac12 \left(|a_{n-1}|+
\sqrt{|a_0|^2+|a_1|^2+\cdots+|a_{n-1}|^2} \right).
\end{equation}

Here, using the numerical radius for circulant matrices in
Corollary~\ref{Copnorm} (or \cite{Khare}), we obtain a new estimation
formula for the roots of $p(z)$. We need the following known lemma.

\begin{lemma}[\cite{Fujii}]\label{lemma-rank1}
Let $D=\begin{bmatrix} a_1 \ \ a_2 \ \ \ldots \ \ a_n \\ {\bf 0}_{(n-1)
\times n} \end{bmatrix}$, where $a_1, a_2, \ldots, a_{n}\in \mathbb{C}$.
Then
\[
w(D)=\frac12 \left(|a_{1}|+ \sqrt{|a_1|^2+|a_2|^2+\cdots+|a_n|^2}
\right).
\]
 \end{lemma}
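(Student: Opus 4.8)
The plan is to recognize $D$ as a rank-one matrix and reduce the computation of $w(D)$ to a single-variable optimization. Set $v := (a_1, \dots, a_n)^T \in \mathbb{C}^n$ and let ${\bf e}_1 \in \mathbb{C}^n$ be the first standard basis vector; then $D = {\bf e}_1 v^T$, so that $Dx = \big( \sum_{j=1}^n a_j x_j \big) {\bf e}_1$ for every $x = (x_1, \dots, x_n)^T \in \mathbb{C}^n$. Hence
\[
\langle Dx, x \rangle = \Big( \sum_{j=1}^n a_j x_j \Big) \overline{x_1},
\qquad \text{so} \qquad
w(D) = \sup_{\|x\| = 1} \ |x_1| \cdot \Big| a_1 x_1 + \sum_{j=2}^n a_j x_j \Big|.
\]

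First I would fix $t := |x_1| \in [0,1]$ and maximize over the remaining coordinates. Writing $\beta := \big( \sum_{j=2}^n |a_j|^2 \big)^{1/2}$, the triangle inequality followed by Cauchy--Schwarz gives $\big| a_1 x_1 + \sum_{j \geq 2} a_j x_j \big| \leq |a_1| t + \beta \sqrt{1 - t^2}$, and this bound is attained: take $(x_2, \dots, x_n)$ parallel to $(\overline{a_2}, \dots, \overline{a_n})$ and scaled to have norm $\sqrt{1 - t^2}$, then choose the phase of $x_1$ so that $a_1 x_1$ and $\sum_{j \geq 2} a_j x_j$ share a common argument. (If $\beta = 0$, take $x_2 = \dots = x_n = 0$; if $a_1 = 0$, no phase adjustment is needed.) Therefore
\[
w(D) = \max_{0 \leq t \leq 1} \ \big( |a_1|\, t^2 + \beta\, t \sqrt{1 - t^2} \big).
\]

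Next I would perform this one-variable maximization via the substitution $t = \cos\theta$ with $\theta \in [0, \pi/2]$ and the double-angle identities, which rewrite the objective as $\tfrac12 \big( |a_1| + |a_1| \cos 2\theta + \beta \sin 2\theta \big)$. Using that $\max_{\psi} (A \cos\psi + B \sin\psi) = \sqrt{A^2 + B^2}$ for $A, B \geq 0$, with the maximizing $\psi$ lying in $[0, \pi/2] \subseteq [0, \pi] = \{ 2\theta : \theta \in [0,\pi/2] \}$, we conclude
\[
w(D) = \tfrac12 \big( |a_1| + \sqrt{|a_1|^2 + \beta^2} \big) = \tfrac12 \Big( |a_1| + \sqrt{ |a_1|^2 + |a_2|^2 + \cdots + |a_n|^2 } \Big),
\]
which is the claim.

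The only step needing care — and it is not a genuine obstacle — is checking that the triangle and Cauchy--Schwarz inequalities can be made \emph{simultaneous} equalities, i.e., that for each fixed $t$ the value $|a_1| t + \beta \sqrt{1 - t^2}$ is actually attained; the alignment choices above do this, including the degenerate cases $\beta = 0$ and $a_1 = 0$. As an alternative that sidesteps the optimization, one could instead invoke the classical identity $\sup_{\|x\| = 1} |\langle x, u \rangle \langle x, w \rangle| = \tfrac12 \big( \|u\| \, \|w\| + |\langle u, w \rangle| \big)$ applied with $u = {\bf e}_1$ and $w = \overline{v}$.
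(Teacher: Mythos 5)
Your proof is correct. Note, however, that the paper does not prove this lemma at all: it is quoted from Fujii--Kubo \cite{Fujii}, so there is no internal argument to compare against. What you give is a self-contained elementary verification: writing $D = {\bf e}_1 v^T$ with $v=(a_1,\dots,a_n)^T$, reducing $w(D)$ to $\max_{0\le t\le 1}\bigl(|a_1|t^2+\beta t\sqrt{1-t^2}\bigr)$ with $\beta^2=\sum_{j\ge 2}|a_j|^2$, and solving by the double-angle substitution; the attainability of the triangle/Cauchy--Schwarz bound for each fixed $t=|x_1|$ is handled correctly, including the degenerate cases $\beta=0$ and $a_1=0$. Your closing alternative -- the identity $w(uv^*)=\tfrac12\bigl(\|u\|\,\|v\|+|\langle u,v\rangle|\bigr)$ applied to $u={\bf e}_1$, $w=\overline{v}$ -- is essentially the Fujii--Kubo rank-one formula itself, which is the route the cited source takes and which the paper invokes explicitly later (in the proof of Theorem~\ref{Tforallm}(3)); so that remark recovers the ``official'' proof, while your main argument trades the general rank-one identity for a direct one-variable optimization, making the lemma independent of \cite{Fujii}.
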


\begin{theorem} \label{est-poly}
If $\lambda$ is a root of $p(z),$ then
	\begin{eqnarray*}  
 |\lambda| &\leq&   1+ \frac12 \left(|a_{n-1}|+ \sqrt{|a_0+1|^2+|a_1|^2+|a_2|^2+\cdots+|a_{n-1}|^2} \right).
	\end{eqnarray*}
\end{theorem}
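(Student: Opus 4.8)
The plan is to mimic the Fujii--Kubo strategy that produced~\eqref{fuji}, but with a cleverly modified decomposition of the companion matrix $C(p)$ that exploits the exact numerical radius formula for circulant matrices from Corollary~\ref{Copnorm}. The key point is that any root $\lambda$ of $p$ is an eigenvalue of $C(p)$, hence $|\lambda| \le w(C(p))$; so it suffices to bound $w(C(p))$. Rather than splitting $C(p)$ into its ``shift part'' $\begin{bmatrix}{\bf 0}_{1\times(n-1)} & 0\\ I_{n-1} & {\bf 0}\end{bmatrix}$ and its ``coefficient part'' (the first row), which is what leads to $\cos\frac{\pi}{n+1}$, I would add and subtract the full cyclic permutation matrix $C_0 := Circ(0,1,0,\dots,0)$. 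That is, write
\[
C(p) = C_0 + \left( C(p) - C_0 \right),
\]
and observe that $C(p) - C_0$ differs from $C(p)$ only in its $(1,n)$ entry, which changes from $-a_0$ to $-a_0 - 1$; in particular $C(p) - C_0 = D$, where $D$ is the rank-one matrix $\begin{bmatrix} -a_{n-1} & -a_{n-2} & \cdots & -a_1 & -a_0-1 \\ {\bf 0}_{(n-1)\times n}\end{bmatrix}$ of the form handled by Lemma~\ref{lemma-rank1}.

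With this decomposition, the triangle inequality for the numerical radius gives
\[
w(C(p)) \le w(C_0) + w(D).
\]
For the first term: $C_0 = Circ(0,1,0,\dots,0)$ is a doubly stochastic matrix (all row and column sums equal $1$), so Corollary~\ref{Copnorm} (with $\mathcal{H} = \mathbb{C}$, $B = (1)$, $k = 1$) yields $w(C_0) = 1$. For the second term, Lemma~\ref{lemma-rank1} applied to $D$ (with $a_1$ there equal to $-a_{n-1}$ here, and the remaining entries $-a_{n-2}, \dots, -a_1, -a_0-1$) gives
\[
w(D) = \frac{1}{2}\left( |a_{n-1}| + \sqrt{|a_{n-1}|^2 + |a_{n-2}|^2 + \cdots + |a_1|^2 + |a_0+1|^2}\right).
\]
Adding these two bounds gives exactly the claimed estimate, since $|\lambda| \le w(C(p)) \le 1 + w(D)$.

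The only place requiring genuine care is the bookkeeping in the rank-one lemma: one must check that $C(p) - C_0$ really is supported on its first row (i.e., that subtracting $C_0$ kills precisely the subdiagonal identity block of $C(p)$ and touches only the $(1,n)$ entry among the first-row entries), and then that the entries are lined up correctly so that the coefficient playing the role of ``$a_1$'' in Lemma~\ref{lemma-rank1} is $-a_{n-1}$ (whose modulus appears alone outside the square root). There is no real obstacle here — everything is an exact identity or a direct citation — so the ``hard part'' is merely confirming that $w(C_0) = 1$ via Corollary~\ref{Copnorm} rather than via a more ad hoc circulant computation, and making sure the indices in the square root match the statement. I would also remark at the end that, since $\|C_0\| = 1 > w(C_0)$ is false here (in fact $w(C_0) = \|C_0\| = 1$), one cannot hope to improve the ``$1$'' term by this route, and that the improvement over~\eqref{fuji} is genuine precisely when $|a_0 + 1| < |a_0|$, i.e., when $a_0$ lies in the appropriate half-plane.
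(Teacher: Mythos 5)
Your proposal is correct and follows essentially the same route as the paper: write $C(p)$ as the cyclic permutation matrix plus the rank-one matrix supported on the first row with last entry $-(a_0+1)$, then use subadditivity of $w$, the fact that $w$ of this (doubly stochastic, unitary) permutation is $1$, and Lemma~\ref{lemma-rank1}. The only slip is notational: under the paper's circulant convention the matrix you describe (ones on the subdiagonal and in position $(1,n)$) is $Circ(0,\dots,0,1)$, not $Circ(0,1,0,\dots,0)$ (its transpose); since both are permutation matrices with numerical radius $1$, this does not affect the argument.
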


\begin{proof}
Consider $C(p)=A+D$, where
\begin{eqnarray*}
A=\begin{bmatrix}
{\bf 0}_{1 \times (n-1)} & 1 \\
I_{n-1} & {\bf 0}_{(n-1) \times 1}
\end{bmatrix} \,\,\,\text{and}\,\,\,
D = -\begin{bmatrix}
a_{n-1} \ \ a_{n-2} \ \ \ldots \ \ a_1 \ \ (a_0+1) \\ {\bf 0}_{(n-1) \times n}
\end{bmatrix}.
\end{eqnarray*}
\blue{Since $A=Circ(0,0,\ldots,0,1)$ is a unitary (permutation) matrix,
clearly $w(A)=1$.} 
Also, Lemma~\ref{lemma-rank1} yields
$w(D)=\frac12 \left(|a_{n-1}|+
\sqrt{|a_0+1|^2+|a_1|^2+|a_2|^2+\cdots+|a_{n-1}|^2} \right).$ This yields
the result:
\[
|\lambda| \leq w(C(p))\leq w(A)+w(D)=1+\frac12 \left(|a_{n-1}|+ \sqrt{|a_0+1|^2+|a_1|^2+|a_2|^2+\cdots+|a_{n-1}|^2} \right). \qedhere
\]
\end{proof}

\begin{remark}
Suppose $p(z)=z^n+a_{n-1}z^{n-1}+\cdots+a_1z+a_0,$ where $|a_0+1|< |a_0|$
(e.g.\ where $Re(a_0)\leq -1$). For this class of polynomials,
Theorem~\ref{est-poly} gives a stronger estimate than~\eqref{fuji} for
all sufficiently large $n$.
\end{remark}

\section{Some questions arising from
Theorem~\ref{Tref}}\label{Sspeculation}

\blue{Finally, we consider some questions that arise naturally from
Theorem~\ref{Tref}. We begin with a variant of the Spectral Theorem for
normal matrices. Note that in this line of work, one considers three
non-negative real numbers associated to a complex square matrix $A$:
(a)~the spectral radius $r(A) = \max_{\lambda \in \sigma(A)} |\lambda|$;
(b)~the numerical radius $w(A) = \sup_{\| x \|=1} |\langle Ax,x
\rangle|$; and
(c)~the spectral norm $\| A \| = \sup_{\| x \|=1} \|Ax\|$.
It is well known that $r(A) \leq w(A) \leq \| A\|$, and if $\|A\| = w(A)$
then $\|A\| = r(A)$ too (see Lemma~\ref{lem-Ref}).

Now, Theorem~\ref{Tref} says that if $w(A) = \|A\| = r(A)$, then $A$ has
``partial diagonalizability'': each Jordan block for each maximum-modulus
eigenvalue is $1 \times 1$. Here the reader will recall the Spectral
Theorem, which says that if $A$ is normal (a strictly stronger condition
than $w(A) = \|A\|$) then $A$ is diagonalizable -- and conversely. This
leads to a natural question:

\begin{question}
Suppose $A \in \mathcal{M}_n(\mathbb{C})$. What condition on the
``diagonalizability'' side is equivalent to $w(A) = \|A\| = r(A)$?
Dually, what relations between $r(A), w(A)$, and $\|A\|$ (or more) is
equivalent to each Jordan block of each maximum-modulus eigenvalue being
$1 \times 1$?
\end{question}

The second question has essentially been answered by
Goldberg--Tadmor--Zwas~\cite{GTZ}:

\begin{theorem}[{\cite[Theorem~1]{GTZ}}]
Let $A \in \mathcal{M}_n(\mathbb{C})$ and let its eigenvalues $a_1,
\dots, a_n$ be as in Theorem~\ref{Tref}. Then $w(A) = r(A)$ if and only
if $A$ is unitarily similar to ${\rm diag}(a_1, \dots, a_k) \oplus A'$,
with $A'$ lower triangular and $w(A') \leq r(A)$.
\end{theorem}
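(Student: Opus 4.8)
The plan is to prove the two implications separately; the ``only if'' direction is the substantive one, and I would obtain it by peeling off the maximum-modulus eigenvalues one at a time, each step resting on a single lemma. For the ``if'' direction, suppose $A$ is unitarily similar to $D \oplus A'$ with $D = \mathrm{diag}(a_1,\dots,a_k)$ and $w(A') \le r(A)$. Since $w(\cdot)$ is unitarily invariant and $w(X \oplus Y) = \max\{w(X), w(Y)\}$ (as used in the alternate proof of Proposition~\ref{P2x2}), and since $w(D) = \max_j |a_j| = |a_1| \le r(A)$ (because $a_1 \in \sigma(A)$), it follows that $w(A) = \max\{w(D), w(A')\} \le r(A)$; combined with the universal bound $r(A) \le w(A)$, this gives $w(A) = r(A)$.

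For the ``only if'' direction, first dispose of the trivial case $r(A) = 0$: then $w(A) = 0$, so $A = 0$ and the decomposition holds vacuously. Assume henceforth $r(A) = |a_1| > 0$. The key lemma is: \emph{if $w(A) = r(A) = |a_1| > 0$, then $A$ has a unit eigenvector $u$ with $Au = a_1 u$ and $A^* u = \bar a_1 u$.} To see this, set $C := a_1^{-1} A$, so that $1 \in \sigma(C)$ and $w(C) = r(C) = 1$; pick a unit eigenvector $u$ with $Cu = u$, and let $H := \tfrac12(C + C^*)$, a Hermitian matrix. Then $\langle Hu, u\rangle = \mathrm{Re}\,\langle Cu, u\rangle = 1$, while for every unit vector $x$ one has $\langle Hx, x\rangle = \mathrm{Re}\,\langle Cx, x\rangle \le |\langle Cx, x\rangle| \le w(C) = 1$, so $\lambda_{\max}(H) = 1$ is attained at $u$. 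Since $H$ is Hermitian, equality in its variational characterization forces $Hu = u$, i.e.\ $C^* u = 2u - Cu = u$, whence $A^* u = \bar a_1 u$.

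With the lemma in hand, $\mathbb{C}u$ is a reducing subspace for $A$, so $u^\perp$ is $A$-invariant (as $\mathbb{C}u$ is $A^*$-invariant), and $A$ is unitarily similar to $(a_1) \oplus A_1$, where $A_1$ is the restriction of $A$ to $u^\perp$; it has eigenvalues $a_2,\dots,a_n$ and satisfies $w(A_1) \le w(A) = |a_1| = r(A)$ by the direct-sum formula. If $k \ge 2$, then $a_2 \in \sigma(A_1)$ with $|a_2| = |a_1|$, so $r(A_1) \ge |a_1| \ge w(A_1) \ge r(A_1)$, forcing $w(A_1) = r(A_1) = |a_1|$; the lemma then applies to $A_1$ and $a_2$. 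Iterating $k$ times shows that $A$ is unitarily similar to $\mathrm{diag}(a_1,\dots,a_k) \oplus A'$, where $A'$ has eigenvalues $a_{k+1},\dots,a_n$ (all of modulus $< |a_1|$) and $w(A') \le r(A)$. Finally, applying Schur's triangularization theorem to $A'$ and then conjugating by the reversal permutation matrix (both unitary operations) replaces $A'$ by a unitarily similar lower-triangular matrix, without affecting $w(A')$ or the diagonal block; this yields the desired decomposition.

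The main obstacle — and essentially the only place where work happens — is the key lemma: recognizing that the chain $1 = \langle Hu, u\rangle \le \lambda_{\max}(H) \le w(C) = 1$ collapses, and that for a Hermitian $H$ the equality $\langle Hu, u\rangle = \lambda_{\max}(H)$ with $\|u\| = 1$ forces $u$ into the top eigenspace of $H$. Everything after that — the direct-sum behaviour of $w$, the induction on the number of peripheral eigenvalues, and the cosmetic reduction of $A'$ to lower-triangular form — is routine.
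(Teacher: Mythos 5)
Your proof is correct, and it is worth noting that the paper itself offers no proof of this statement: it is quoted verbatim from Goldberg--Tadmor--Zwas \cite{GTZ}, so any self-contained argument is ``new'' relative to the text. Your route is also genuinely different from the closest in-paper analogue, namely the proof of Theorem~\ref{Tref}(1), which works under the stronger hypothesis $w(A)=\|A\|$ and peels off maximum-modulus eigenvalues by an explicit norm computation (comparing $\|U_{l+1}^*AU_{l+1}\|$ with the lengths of the first $l+1$ rows to force the off-diagonal blocks $u_j^*AB$ to vanish); that argument uses the spectral norm in an essential way and would not directly apply when one only knows $w(A)=r(A)$. Your key lemma instead is the classical ``reducing eigenvector'' fact: if $w(A)=r(A)=|a_1|>0$, normalize to $C=a_1^{-1}A$, and use that the unit eigenvector $u$ with $Cu=u$ attains $\lambda_{\max}\bigl(\tfrac12(C+C^*)\bigr)=1$, whence the Hermitian equality case gives $C^*u=u$ and $\mathbb{C}u$ reduces $A$; the induction on the $k$ peripheral eigenvalues, the estimate $w(A')\le w(A)=r(A)$ via the direct-sum formula, and the Schur-plus-reversal step to make $A'$ lower triangular are all sound, as is your short proof of the (easy) converse direction, which the paper likewise does not spell out. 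In short: correct, complete, and obtained by a different (and, for this weaker hypothesis, more appropriate) mechanism than the paper's Theorem~\ref{Tref}(1) computation.
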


The only case where we can fully answer the first question above is for
$n=2$:

\begin{prop}
For any matrix $A \in \mathcal{M}_n(\mathbb{C})$, if $A$ is normal then
$w(A) = \|A\|$. The converse holds if $n=2$.
\end{prop}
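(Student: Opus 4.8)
The plan is to treat the two implications of the proposition separately; given Theorem~\ref{Tref}, both are short.

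\emph{Step 1 ($A$ normal $\Rightarrow w(A) = \|A\|$).} Here I would invoke the Spectral Theorem: a normal $A$ can be written as $A = U^* {\rm diag}(a_1,\dots,a_n) U$ with $U$ unitary, so $\|A\| = \max_{1\le i\le n}|a_i| = r(A)$. Since $r(A) \le w(A) \le \|A\|$ holds for every square matrix (as recalled at the start of this section, and used in Lemma~\ref{lem-Ref}), all three quantities coincide; in particular $w(A) = \|A\|$.

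\emph{Step 2 (converse when $n = 2$).} Here I would suppose $n = 2$ and $w(A) = \|A\|$, and apply Theorem~\ref{Tref}(1). List the eigenvalues of $A$ as $a_1, a_2$ with $|a_1| = \cdots = |a_k| > |a_{k+1}|, \dots$ in the notation of Theorem~\ref{Tref}, so that $k \in \{1, 2\}$. The proof of Theorem~\ref{Tref}(1) (not merely its statement) produces a unitary $U$ with
\[
U^* A U = {\rm diag}(a_1,\dots,a_k) \oplus A', \qquad A' \in \mathcal{M}_{2-k}(\mathbb{C}).
\]
Because $k \ge 1$, the block $A'$ has size $0$ or $1$ and is therefore (vacuously or trivially) diagonal; hence $U^* A U = {\rm diag}(a_1, a_2)$, i.e.\ $A$ is unitarily diagonalizable, so normal.

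The substantive work lies entirely in Step~2, and it has already been carried out inside Theorem~\ref{Tref}(1): the genuine difficulty -- splitting off the maximum-modulus part of the spectrum as a diagonal block -- is precisely that theorem's content, after which the hypothesis $n = 2$ leaves no room for a nontrivial remainder. This also makes transparent why $n = 2$ cannot be dropped: for $n \ge 3$ the matrix $A = \begin{pmatrix} 1 & 0 & 0 \\ 0 & 0 & 1 \\ 0 & 0 & 0 \end{pmatrix}$ (the example following Theorem~\ref{Tref}) satisfies $w(A) = 1 = \|A\|$ while being non-normal, since its residual $2 \times 2$ block $E_{12}$ has $w(E_{12}) = \tfrac12 < 1$.
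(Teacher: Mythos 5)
Your proof is correct and follows essentially the same route as the paper: the forward direction is the standard reduction to diagonal matrices via unitary invariance (the paper phrases it through weak unitary invariance, you through $r(A)\le w(A)\le\|A\|$ and $\|A\|=r(A)$ for normal $A$, which is the same content), and the converse applies Theorem~\ref{Tref}(1) -- more precisely the unitary decomposition $U^*AU={\rm diag}(a_1,\dots,a_k)\oplus A'$ established in its proof and invoked in part~(2) -- so that for $n=2$ the residual block is at most $1\times1$ and $A$ is unitarily diagonalizable, hence normal. Your explicit remark that one needs the unitary block form rather than the bare Jordan-block statement is a fair point of care, and matches how the paper (and Goldberg--Zwas) use the result.
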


\begin{proof}
For the first statement, as both sides are weakly unitarily invariant,
the claim reduces to that for $A$ diagonal, where it is easily verified.
The converse is also \cite[Corollary~2]{GZ}, and its proof is immediate
(and the same in \textit{loc.\ cit.}\ as here, given
Theorem~\ref{Tref}(1)): if $n=2$ then applying Theorem~\ref{Tref}(1), $A$
is unitarily equivalent to a diagonal matrix. Hence $A$ is normal.
\end{proof}

We end with another question that is more directly related to
Theorem~\ref{Tref}.

\begin{question}\label{Qforallm}
Fix an integer $n \geq 2$. Can one characterize all matrices $A \in
\mathcal{M}_n(\mathbb{C})$ such that $w(A) = \|A\|$ and $w(A^{\circ m}) =
w^m(A)$ for all $m \geq 1$?
\end{question}

Here is a partial answer to the question.

\begin{theorem}\label{Tforallm}
Let $\mathcal{S}$ comprise all complex square matrices (of all sizes)
which affirmatively answer Question~\ref{Qforallm}.
\begin{enumerate}
\item Then $\mathcal{S}$ is closed under:
\begin{enumerate}
\item taking block direct sums,
\item rescaling by any $z \in \mathbb{C}$,
\item conjugating by permutation matrices, and
\item taking Kronecker products.
\end{enumerate}

\item $\mathcal{S}$ includes all matrices of the form
$DP \oplus T$,
where $D$ is a unitary diagonal matrix (with diagonal entries in $S^1$),
$P$ is a permutation matrix, and $T$ is any contraction, i.e.\ a matrix
with $\|T\| \leq 1$.

\item If $A \in \mathcal{S}$ has rank one, then the converse is true: $A$
is a diagonal matrix with one nonzero complex entry.
\end{enumerate}
\end{theorem}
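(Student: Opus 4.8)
The plan is to handle the three parts in turn; each reduces to a handful of elementary compatibility identities together with one or two inputs recalled from earlier in the paper. For Part~(1) I would first record the facts I will use repeatedly: $w(X\oplus Y)=\max\{w(X),w(Y)\}$ and $\|X\oplus Y\|=\max\{\|X\|,\|Y\|\}$; that $w$ and $\|\cdot\|$ are homogeneous of degree one and invariant under conjugation by a permutation matrix; that $\|X\otimes Y\|=\|X\|\,\|Y\|$; and on the Schur side the identities $(X\oplus Y)^{\circ m}=X^{\circ m}\oplus Y^{\circ m}$, $(zX)^{\circ m}=z^m X^{\circ m}$, $(P^{-1}XP)^{\circ m}=P^{-1}X^{\circ m}P$ for a permutation $P$, and -- via the mixed-product rule $(X\otimes Y)\circ(X'\otimes Y')=(X\circ X')\otimes(Y\circ Y')$ -- $(X\otimes Y)^{\circ m}=X^{\circ m}\otimes Y^{\circ m}$. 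With these in hand, closure under (a) direct sums, (b) rescaling, and (c) permutation-conjugation is a one-line verification each time; e.g.\ for (a), if $X,Y\in\mathcal{S}$ then $w(X\oplus Y)=\max\{\|X\|,\|Y\|\}=\|X\oplus Y\|$ and $w((X\oplus Y)^{\circ m})=\max\{w^m(X),w^m(Y)\}=w^m(X\oplus Y)$. For (d), I would use~\eqref{p3} together with $w(X)=\|X\|$ to get $w(X\otimes Y)=w(X)w(Y)=\|X\|\,\|Y\|=\|X\otimes Y\|$, and then for each $m$ write $w((X\otimes Y)^{\circ m})=w(X^{\circ m}\otimes Y^{\circ m})$ and apply~\eqref{p3} once more; the one step that needs care is that this last application requires $w(X^{\circ m})=\|X^{\circ m}\|$, which is exactly Proposition~\ref{th10}(3) applied to $X\in\mathcal{S}$, after which $w(X^{\circ m}\otimes Y^{\circ m})=w(X^{\circ m})w(Y^{\circ m})=w^m(X)w^m(Y)=w^m(X\otimes Y)$.

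For Part~(2) I would write $M=DP\oplus T$ (with $DP$ the dominant, nonempty block) and observe that $DP$ is a monomial matrix all of whose nonzero entries have modulus one, hence is unitary; so $\|DP\|=1$ and, since every eigenvalue of a unitary lies on the unit circle, $w(DP)=1$. Crucially $(DP)^{\circ m}$ is again a monomial matrix with unit-modulus nonzero entries (raise each entry to the $m$th power), hence unitary, so $w((DP)^{\circ m})=1$ for all $m$. For the other block, $\|T\|\le 1$ gives $\|T^{\circ m}\|\le\|T^{\otimes m}\|=\|T\|^m\le 1$ (as $T^{\circ m}$ is a submatrix of $T^{\otimes m}$, by Lemma~\ref{Lpos}), so $w(T^{\circ m})\le 1$. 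Combining: $\|M\|=\max\{1,\|T\|\}=1=\max\{1,w(T)\}=w(M)$, and for every $m$, $w(M^{\circ m})=\max\{w((DP)^{\circ m}),w(T^{\circ m})\}=1=1^m=w^m(M)$; hence $M\in\mathcal{S}$. (The hypothesis that $DP$ is nonempty cannot be dropped: a contraction such as $\tfrac{1}{2}E_{12}$ is not in $\mathcal{S}$.)

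For Part~(3) I would write the rank-one matrix as $A=\xi\eta^*$ with $\xi,\eta\ne 0$ and use the rank-one identity $w(\xi\eta^*)=\tfrac{1}{2}\big(\|\xi\|\,\|\eta\|+|\langle\xi,\eta\rangle|\big)$ (a mild generalization of Lemma~\ref{lemma-rank1}). Then $w(A)=\|A\|=\|\xi\|\,\|\eta\|$ forces $|\langle\xi,\eta\rangle|=\|\xi\|\,\|\eta\|$, so the equality case of Cauchy--Schwarz makes $\eta$ a scalar multiple of $\xi$; rescaling by Part~(1)(b) I may assume $A=c\,uu^*$ with $\|u\|=1$ and $c\ne 0$. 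Now $w(A)=|c|$, while $A^{\circ m}=c^m u^{(m)}(u^{(m)})^*$ with $u^{(m)}:=(u_1^m,\dots,u_n^m)^T$, so $w(A^{\circ m})=|c|^m\|u^{(m)}\|^2=|c|^m\sum_i|u_i|^{2m}$. Hence $A\in\mathcal{S}$ forces $\sum_i|u_i|^{2m}=1$ for all $m$; already $m=2$ gives $\sum_i|u_i|^4=1=\sum_i|u_i|^2$, and since $0\le|u_i|\le 1$ this forces each $|u_i|\in\{0,1\}$, so $u$ is supported on a single coordinate and $A=cE_{kk}$. For the converse I would note that such $A$ lie in $\mathcal{S}$ by Parts~(1)(b), (1)(c) and~(2) with $T=0$. (One can also bypass the rank-one $w$-formula: Proposition~\ref{th10}(3) gives $\|A^{\circ m}\|=\|A\|^m$, which for $A=\xi\eta^*$ and $m=2$ forces both $\xi$ and $\eta$ to be supported on one coordinate, and then $w(A)=\|A\|$ forces those coordinates to coincide.)

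Honestly, there is no deep obstacle here. The two points I would guard against are: in Part~(1)(d), remembering to invoke Proposition~\ref{th10}(3) so that~\eqref{p3} yields the \emph{equality} $w(X^{\circ m}\otimes Y^{\circ m})=w(X^{\circ m})w(Y^{\circ m})$ rather than merely an inequality; and in Part~(3), having the rank-one numerical-radius identity at hand (or routing around it via Proposition~\ref{th10}(3)). Everything else is bookkeeping with the compatibility identities listed above.
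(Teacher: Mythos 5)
Your proposal is correct and follows essentially the same route as the paper: the direct-sum formula $w(X\oplus Y)=\max\{w(X),w(Y)\}$ for (1a)--(1c) and (2), Proposition~\ref{th10}(3) combined with the equality case of Holbrook's bound (your~\eqref{p3} step is the same fact the paper invokes via Theorem~\ref{cor1}) for (1d), unitarity of $DP$ and of $(DP)^{\circ m}=D^mP$ plus the submatrix bound $w(T^{\circ m})\leq\|T^{\otimes m}\|\leq 1$ for (2), and the rank-one numerical-radius identity with the Cauchy--Schwarz equality case and the power-sum equality $\sum_j a_j^m=(\sum_j a_j)^m$ for (3). The only deviations (normalizing $v$, citing~\eqref{p3} instead of Theorem~\ref{cor1}, and the optional bypass of the rank-one formula) are cosmetic.
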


It is natural to wonder if Theorem~\ref{Tforallm} provides all solutions
to Question~\ref{Qforallm}.

\begin{proof}\hfill
\begin{enumerate}
\item The first part follows by using that $w(X \oplus Y) = \max( w(X),
w(Y))$. The second and third parts are easily shown. For the fourth, if
$A,B \in \mathcal{S}$, then using Theorem~\ref{cor1}, we get
\[
w(A \otimes B) = w(A) w(B) = \|A\| \, \|B\| = \| A \otimes B \|.
\]
Moreover, since $(A \otimes B)^{\circ m} = A^{\circ m} \otimes B^{\circ
m}$ for all $m,A,B$, we compute:
\[
w((A \otimes B)^{\circ m}) = w(A^{\circ m} \otimes B^{\circ m}) =
w(A^{\circ m}) w(B^{\circ m}) = w^m(A) w^m(B) = w^m(A \otimes B),
\]
where the second equality uses Proposition~\ref{th10}(3) and
Theorem~\ref{cor1}. 

\item Note that $DP$ is unitary, so $r(DP) = \|DP\| = 1$ and hence $w(DP)
= 1 \geq \|T\| \geq w(T)$. Hence $w(DP \oplus T) = 1 = \| DP \oplus T
\|$. Moreover, $D^m P$ is also of the same form as $DP$, so
\[
w((DP)^{\circ m}) = w(D^m P) = 1.
\]

Next, by ``padding test vectors by zeros'', and since $T^{\otimes m}$ is
also a contraction,
\[
w(T^{\circ m}) \leq w(T^{\otimes m}) \leq \| T^{\otimes m} \| \leq 1.
\]
Combining these bounds, we get:
\[
w((DP \oplus T)^{\circ m}) = \max ( w(DP)^{\circ m}, w(T^{\circ m}) ) = 1
= w^m(DP \oplus T).
\]

\item This assertion is the converse of the preceding part, in the sense
that if one considers the closure of matrices $DP \oplus T$ under the
operations in (1a), (1b), (1c), then the matrices of rank one in this
closure are precisely the diagonal matrices with one nonzero entry. For
instance, if $DP \oplus T$ has rank one, then $T$ must be zero and $DP$
is invertible, hence $1 \times 1$.

We now prove the assertion. Suppose $A_{n \times n} \in \mathcal{S}$ has
rank one. Write $A = u v^* = u \otimes v^*$, with $0 \neq u,v \in
\mathbb{C}^n$. Thus, $\|A\| = \| u \| \, \| v \|$, and
\cite[Theorem~1]{Fujii} gives that
\[
w(A) = \frac{\| u \| \, \| v \| + |\langle u,v \rangle|}{2}.
\]

By the Cauchy--Schwarz equality, $w(A) = \|A\|$ means that $u,v$ are
proportional. Thus, write $A = \lambda v v^*$, with $\lambda$ and $v$
nonzero. Then for all $m \geq 1$,
\[
|\lambda|^m \| v^{\circ m} \|^2 =
w(A^{\circ m}) = w^m(A) = \|A\|^m = |\lambda|^m \|v\|^{2m}.
\]

Now we will use this equality not for all $m \geq 1$, but for any single
$m \geq 2$. Canceling $|\lambda|^m$ and letting $v = (v_1, \dots,
v_n)^T$, we have
\[
\sum_{j=1}^n |v_j|^{2m} = \left( \sum_{j=1}^n |v_j|^2 \right)^m.
\]
Letting $a_j := |v_j|^2 \geq 0$, we have $\sum_j a_j^m = ( \sum_j
a_j)^m$. This holds if and only if at most one $a_j$ is nonzero. Thus $v
= v_j {\bf e}_j$ for a unique $1 \leq j \leq n$ and $v_j \neq 0$, and the
proof is complete. \qedhere
\end{enumerate}
\end{proof}

\begin{remark}
For completeness, we mention that matrices of the form $DP$ occur in
multiple other settings. They were used by
Hershkowitz--Neumann--Schneider~\cite{HNS} to classify all positive
semidefinite matrices with entries of modulus $0,1$. But even before
that, a folklore fact asserts that such $n \times n$ matrices $DP$ are
precisely the linear isometries of $\mathbb{C}^n$ equipped with the
$\|\cdot\|_p$ norm, for every $2 \neq p \in [1,\infty]$. This is a
special case of the Banach--Lamperti theorem \cite{Banach,Lamperti}.
\end{remark}}

\blue{
\subsection*{Acknowledgments}
We thank Terence Tao for pointing us to the references \cite{GTZ,GZ}.
We also thank the referee for several useful suggestions that improved
the manuscript, including Theorem~\ref{Tref} (which replaces the previous
version that required assuming $A$ normal) and the alternate proof of
Proposition \ref{P2x2}.
P.B.\ was supported by National Post Doctoral Fellowship
PDF/2022/000325 by SERB (Govt.\ of India) and NBHM Post-Doctoral
Fellowship 0204/16(3)/2024/R\&D-II/6747 by National Board for Higher
Mathematics (Govt.\ of India).
A.K.\ was partially supported by SwarnaJayanti Fellowship grants
SB/SJF/2019-20/14 and DST/SJF/MS/2019/3 from SERB and DST (Govt.\ of
India) and by a Shanti Swarup Bhatnagar Award from CSIR (Govt.\ of
India).

\subsection*{Declaration of competing interest}
The authors have no relevant financial or non-financial interests to disclose.

\subsection*{Data availability}
Data sharing not applicable to this article as no datasets were generated
or analysed during the current study.
}

\bibliographystyle{amsplain}


\end{document}